% ----------------------------------------------------------------
% AMS-LaTeX Paper ************************************************
% **** -----------------------------------------------------------

%\documentclass[10pt]{amsart}

\documentclass[a4paper,10pt]{amsart}
\usepackage[left=2.7cm,right=2.7cm,top=3.5cm,bottom=3cm]{geometry}

\usepackage{amssymb,latexsym,amsmath,amsthm,amscd}
\usepackage{graphicx}
\usepackage{dsfont}
\usepackage[all]{xy}
\usepackage{mathrsfs}
\usepackage{color}
\definecolor{Blue}{rgb}{0.3,0.3,0.9}

\usepackage{amsmath}
\usepackage{amscd,amsthm,amssymb,amsfonts}
\usepackage{mathrsfs}
\usepackage{dsfont}
\usepackage{stmaryrd}
\usepackage{euscript}
\usepackage{expdlist}
\usepackage{enumerate}

%\voffset = -20pt \hoffset = -60pt \textwidth = 460pt \textheight
%=651pt \headheight = 12pt \headsep = 20pt

% ----------------------------------------------------------------
\newcommand{\sk}{\vspace{0.1in}}
\vfuzz2pt % Don't report over-full v-boxes if over-edge is small
\hfuzz2pt % Don't report over-full h-boxes if over-edge is small
% THEOREMS -------------------------------------------------------
\newtheorem{thm}{Theorem}[section]
\newtheorem{def-thm}[thm]{Definition-Theorem}
\newtheorem{cor}[thm]{Corollary}
\newtheorem{lem}[thm]{Lemma}
\newtheorem{prop}[thm]{Proposition}
\newtheorem{ass}[thm]{Assumptions}

\newtheorem*{mainthmA}{Theorem}

\theoremstyle{definition}
\newtheorem{defn}[thm]{Definition}
\theoremstyle{remark}
\newtheorem{rem}[thm]{Remark}
\newtheorem{intro-rem}{Remark}
\numberwithin{equation}{section}
% MATH -----------------------------------------------------------

\newcommand{\A}{\mathcal{A}}
\newcommand{\cM}{\mathcal{M}}
\newcommand{\cV}{\mathcal{V}}
\newcommand{\cW}{\mathcal{W}}
\newcommand{\cO}{\mathcal{O}}

\newcommand{\cU}{\mathcal{U}}
\newcommand{\cX}{\mathcal{X}}
\newcommand{\cZ}{\mathcal{Z}}
\newcommand{\cE}{\mathcal{E}}
\newcommand{\Ext}{{\rm Ext}}

\newcommand{\pp}{\mathfrak{p}}
\newcommand{\qq}{\mathfrak{q}}
\newcommand{\fa}{\mathfrak{a}}

\newcommand{\ccL}{\mathcal{L}}
\newcommand{\cI}{\mathcal{I}}
\newcommand{\bQ}{\mathbf{Q}}
\newcommand{\bZ}{\mathbf{Z}}
\newcommand{\bC}{\mathbf{C}}

\newcommand{\bT}{\mathbb{T}}

\newcommand{\hcris}{H_{\textrm{log-cris}}}

\newcommand{\cR}{\mathbb{I}}
\newcommand{\fil}{{\mathscr{F}}}

\newcommand{\sF}{\mathbf{f}}
\newcommand{\F}{\mathbf{f}}
\newcommand{\pwseries}[1]{[[#1]]}
\newcommand{\arrow}{\longrightarrow}

% ----------------------------------------------------------------
\begin{document}

\title[On the exceptional specializations of big Heegner points]
{On the exceptional specializations of big Heegner points\\
%\emph{Sur les specialisations exceptionelles des big Heegner points}
}
\author[F.~Castella]{Francesc Castella}
\address{Department of Mathematics, UCLA, Math Sciences Building, Los Angeles, CA 90095, USA}
\email{castella@math.ucla.edu}

\thanks{Research supported in part by Grant MTM2012-34611
and by Prof.~Hida's NSF Grant DMS-0753991.}
%\subjclass{}
%\keywords{}

\date{\today}
%\dedicatory{Dedicated to the memory of Robert F. Coleman}

%\commby{}

% ----------------------------------------------------------------

\begin{abstract}
%By exploiting the $p$-adic integration theory of Coleman \cite{pSI} for curves with bad semistable reduction,
We extend the $p$-adic Gross--Zagier formula of Bertolini, Darmon, and Prasanna \cite{bdp1}
to the semistable non-crystalline setting, and
%This formula relates the values of a $p$-adic $L$-function $L_p(f,\chi)$ at
%characters outside the range of classical interpolation
%to the images of generalized Heegner cycles under a $p$-adic Abel--Jacobi map.
%This extended formula is then
combine it with our previous work \cite{cas-2var}
%on the connection between Howard's system of big Heegner points \cite{howard-invmath} and anticyclotomic $p$-adic $L$-functions
to obtain a derivative formula for the specializations of Howard's big Heegner points \cite{howard-invmath}
at exceptional primes in the Hida family.
\end{abstract}

% ----------------------------------------------------------------

\maketitle
\tableofcontents

\section*{Introduction}

Fix a prime $p\geq 5$, an integer $N>0$ prime to $p$, and
let $f\in S_2(\Gamma_0(Np))$ be a newform.
Throughout this paper, we shall assume that $f$ is \emph{split multiplicative} at $p$,
meaning that
\[
f(q)=q+\sum_{n=2}^\infty a_n(f)q^n\quad\quad\textrm{with $a_p(f)=1$.}
\]
Fix embeddings $\bC\overset{\imath_\infty}\hookleftarrow\overline{\bQ}\overset{\imath_p}\hookrightarrow\bC_p$,
let $L$ be a finite extension of $\bQ_p$ containing $\imath_p\imath_\infty^{-1}(a_n(f))$ for all $n$,
and let $\cO_L$ be the ring of integers of $L$. Since the $U_p$-eigenvalue of $f$ is $a_p(f)=1$ by hypothesis,
the form $f$ is ordinary at $p$, and hence there is a Hida family
\[
\mathbf{f}=\sum_{n=1}^\infty\mathbf{a}_nq^n\in\cR\pwseries{q}
\]
passing through $f$. Here $\cR$ is a finite flat extension of
the power series ring $\cO_L\pwseries{T}$, which for simplicity in this introduction
it will be assumed to be $\cO_L\pwseries{T}$ itself. Embed $\bZ$ in the space
$\mathcal{X}_{\cO_L}(\cR)$ of continuous $\cO_L$-algebra homomorphisms
$\nu:\cR\longrightarrow\overline{\bQ}_p$ by identifying $k\in\bZ$
with the homomorphism $\nu_k:\cR\longrightarrow\overline{\bQ}_p$
defined by $1+T\mapsto(1+p)^{k-2}$. The Hida family $\mathbf{f}$ is then uniquely caracterized by the property that for every
$k\in\bZ_{\geq 2}$ its \emph{weight $k$ specialization}
\[
\mathbf{f}_k:=\sum_{n=1}^{\infty}\nu_k(\mathbf{a}_n)q^n
\]
gives the $q$-expansion of a $p$-ordinary
$p$-stabilized newform $\mathbf{f}_k\in S_k(\Gamma_0(Np))$ with $\mathbf{f}_{2}=f$.
\sk

Let $K$ be an imaginary quadratic field equipped with an integral ideal $\mathfrak{N}\subset\cO_K$
with $\cO_K/\mathfrak{N}\simeq\bZ/N\bZ$, assume that $p$ splits in $K$, and
write $p\cO_K=\mathfrak{p}\overline{\mathfrak{p}}$ with $\mathfrak{p}$ the prime above $p$ induced by $\imath_p$.
If $A$ is an elliptic curve with CM by $\cO_K$, then the pair $(A,A[\mathfrak{Np}])$ defines a
\emph{Heegner point} $P_A$ on $X_0(Np)$ defined over the Hilbert class field $H$ of $K$.
Taking the image of the degree zero divisor $(P_A)-(\infty)$ under the composite map
\begin{equation}\label{def:heeg}
J_0(Np)\xrightarrow{\rm Kum}H^1(H,{\rm Ta}_p(J_0(Np)))\longrightarrow H^1(H,V_f)\xrightarrow{{\rm Cor}_{H/K}} H^1(K,V_f)
\end{equation}
yields a class $\kappa_f\in{\rm Sel}(K,V_f)$ in the Selmer group for the
$p$-adic Galois representation
\[
\rho_f:G_{\bQ}:={\rm Gal}(\overline{\bQ}/\bQ)\arrow{\rm Aut}_L(V_f)\simeq\mathbf{GL}_2(L)
\]
associated to $f$.
%Let $T_f\subset V_f$ be a fixed $G_\bQ$-stable
%and assume that the residual representation $\overline{\rho}_f$ is absolutely irreducible.
On the other hand, by working over a $p$-tower of modular curves,
%by working on the tower of modular curves
%\[
%\cdots\arrow X(\Gamma_0(N)\cap\Gamma_1(p^{r+1}))\arrow
%X(\Gamma_0(N)\cap\Gamma_1(p^{r}))\arrow\cdots,
%\]
Howard~\cite{howard-invmath} constructed a so-called \emph{big Heegner point}
$\mathfrak{Z}_0\in{\rm Sel}_{\rm Gr}(K,\mathbf{T}^\dagger)$ in the Selmer group
for a self-dual twist of the big Galois representation
\[
\rho_{\sF}:G_\bQ\arrow{\rm Aut}_{\cR}(\mathbf{T})\simeq\mathbf{GL}_2(\cR)
\]
associated to $\sF$. The image of $\mathfrak{Z}_0$ under the
\emph{specialization map} $\nu_2:{\rm Sel}_{\rm Gr}(K,\mathbf{T}^\dagger)\longrightarrow{\rm Sel}(K,V_f)$
induced by $\nu_2:\cR\longrightarrow\overline{\bQ}_p$ yields a second class of ``Heegner type'' in ${\rm Sel}(K,V_f)$;
the question of comparing $\kappa_f$ with $\nu_2(\mathfrak{Z}_0)$ thus naturally arises.
\sk

For $k>2$, the question of relating the specializations $\nu_k(\mathfrak{Z}_0)$ to higher dimensional Heegner cycles
was considered in \cite{cas-inv}. In that case, one could show (see [\emph{loc.cit.}, (5.31)]) that
\begin{equation}\label{eq:mathann}
{\rm loc}_\pp(\nu_k(\mathfrak{Z}_0))=u^{-1}\biggl(1-\frac{p^{k/2-1}}{\nu_k(\mathbf{a}_p)}\biggr)^2\cdot
{\rm loc}_\pp(\kappa_{\mathbf{f}_k}),
\end{equation}
where $u:=\vert\cO_K^\times\vert/2$,
${\rm loc}_{\mathfrak{p}}:H^1(K,V_{\mathbf{f}_k})\longrightarrow H^1(K_{\mathfrak{p}},V_{\mathbf{f}_k})$
is the localization map, and $\kappa_{\mathbf{f}_k}$ is a class given by the
$p$-adic \'etale Abel--Jacobi images of certain Heegner cycles on a
Kuga--Sato variety of dimension $k-1$. However, for the above newform $f$,
the main result of \cite{cas-inv} does not immediately yield a similar
relation between $\nu_2(\mathfrak{Z}_0)$ and $\kappa_{\mathbf{f}_2}=\kappa_f$,
since in \emph{loc.cit.} a crucial use is made of the fact
that the $p$-adic Galois representations associated with the eigenforms under consideration
are (potentially) crystalline at $p$, whereas $V_f$ is well-known to be semistable but non-crystalline at $p$.
Moreover, it is easy to see that the expected relation between these two classes may not be
given by the naive extension of $(\ref{eq:mathann})$ with $k=2$: indeed, granted the injectivity of ${\rm loc}_\pp$,
by the Gross--Zagier formula the class ${\rm loc}_\pp(\kappa_f)$ is nonzero as long as $L'(f/K,1)\neq 0$,
whilst $(\ref{eq:mathann})$ for $k=2$ would imply the vanishing of ${\rm loc}_\pp(\nu_2(\mathfrak{Z}_0))$
is all cases, since
\begin{equation}\label{vanishing}
\biggl(1-\frac{p^{k/2-1}}{\nu_k(\mathbf{a}_p)}\biggr)\bigr\vert_{k=2}
=\left(1-\frac{1}{a_p(f)}\right)=0.
\end{equation}

As shown in \cite{howard-invmath}, the class $\mathfrak{Z}_0$ fits in compatible system of similar
classes $\mathfrak{Z}_\infty=\{\mathfrak{Z}_n\}_{n\geq 0}$ over
the anticyclotomic $\bZ_p$-extension of $K$; thus $\mathfrak{Z}_0$ might be seen
as the value of $\mathfrak{Z}_\infty$ at the trivial character.
As suggested by the above discussion, in this paper we will show that the class
%for a form $f$ with split multiplicative reduction as above
${\rm loc}_\pp(\nu_2(\mathfrak{Z}_0))$ vanishes, and prove an ``exceptional zero formula''
relating its derivative at the trivial character (in a precise sense to be defined) to the geometric class $\kappa_f$.
To state the precise result, %we need some further notation.
let $h$ be the class number of $K$, write $\pp^{h}=\pi_\pp\cO_K$, %and $\overline{\pp}^h=\pi_{\overline{\pp}}\cO_K$,
and define
\begin{equation}\label{def:Linv}
\mathscr{L}_\pp(f,K):=\mathscr{L}_p(f)-\frac{\log_p(\varpi_\pp)}{{\rm ord}_p(\varpi_\pp)},
\end{equation}
where $\mathscr{L}_p(f)$ is the $\mathscr{L}$-invariant of $f$ (see \cite[\S{II.14}]{mtt} for example),
$\varpi_\pp:=\pi_\pp/\overline{\pi}_{\pp}\in K_\pp\simeq\bQ_p$,
and $\log_p:\bQ_p^\times\longrightarrow\bQ_p$ is Iwasawa's branch of the $p$-adic logarithm.

\begin{mainthmA}\label{intro:main}
Let $f\in S_2(\Gamma_0(Np))$ be a newform split multiplicative at $p$,
and define $\mathcal{Z}_{\pp,f,\infty}=\{\mathcal{Z}_{\pp,f,n}\}_{n\geq 0}$
by $\mathcal{Z}_{\pp,f,n}:={\rm loc}_\pp(\nu_2(\mathfrak{Z}_n))$.
Then $\mathcal{Z}_{\pp,f,0}=0$ and
\begin{equation}\label{intro:exczero}
\mathcal{Z}_{\pp,f,0}'
=\mathscr{L}_\pp(f,K)\cdot{\rm loc}_\pp(\kappa_f).\nonumber
\end{equation}
%where $\kappa_f$ is the image of the degree zero divisor $(A,A[\mathfrak{Np}])-(\infty)$
%under the composite map $(\ref{def:heeg})$.
\end{mainthmA}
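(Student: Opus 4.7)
The plan is to derive the formula by placing the specialization identity (\ref{eq:mathann}) into a two-variable framework — along both the weight direction of the Hida family $\sF$ and the anticyclotomic tower over $K$ — and then differentiating at the exceptional point $(k,\chi)=(2,\mathbf{1})$. The starting point is the main result of \cite{cas-2var}, which refines (\ref{eq:mathann}) to a two-variable identity expressing ${\rm loc}_\pp(\mathfrak{Z}_\infty)$, viewed as an Iwasawa cohomology class over the anticyclotomic tower, in terms of an anticyclotomic $p$-adic $L$-function attached to $\sF$, up to an Euler-type factor that at $\chi=\mathbf{1}$ reduces to $\bigl(1-p^{k/2-1}/\nu_k(\mathbf{a}_p)\bigr)^2$. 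Because this factor vanishes at $k=2$ by (\ref{vanishing}), specializing the two-variable identity at $(k,\chi)=(2,\mathbf{1})$ forces $\mathcal{Z}_{\pp,f,0}=0$, proving the first assertion.

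To get the derivative, I would differentiate along the anticyclotomic direction at $(2,\mathbf{1})$. The anticyclotomic derivative of the Euler factor naturally decomposes into two contributions. The first is a weight-direction derivative of $\bigl(1-p^{k/2-1}/\nu_k(\mathbf{a}_p)\bigr)$ at $k=2$; by a Greenberg--Stevens style computation, using that $\nu_k(\mathbf{a}_p)$ is a $p$-adic analytic function of $k$ with value $1$ at $k=2$, this produces the Mazur--Tate--Teitelbaum $\mathscr{L}$-invariant $\mathscr{L}_p(f)$. The second is purely anticyclotomic: because a topological generator of the local Galois group at $\pp$ along the anticyclotomic $\bZ_p$-extension is sent, up to units, to $\varpi_\pp$, the anticyclotomic derivative of the corresponding Euler factor contributes $-\log_p(\varpi_\pp)/{\rm ord}_p(\varpi_\pp)$. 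Summing these two pieces gives exactly $\mathscr{L}_\pp(f,K)$ of (\ref{def:Linv}).

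It remains to identify the residual non-vanishing factor at $(2,\mathbf{1})$ on the right-hand side — morally the value of the anticyclotomic $p$-adic $L$-function at the exceptional point — with ${\rm loc}_\pp(\kappa_f)$. For $k>2$ this value is a $p$-adic Abel--Jacobi image of a Heegner cycle, computed through the crystalline BDP formula of \cite{bdp1}. At $k=2$ with $f$ split multiplicative, $V_f$ is semistable non-crystalline, so the crystalline argument does not apply; instead one uses the semistable extension of the BDP $p$-adic Gross--Zagier formula, which is the other main result of the paper. That semistable formula identifies the exceptional-point value with ${\rm loc}_\pp(\kappa_f)$ via the Bloch--Kato logarithm for semistable representations. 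The main obstacle I expect is precisely this semistable BDP formula: the BDP period, the Coleman-style comparison between anticyclotomic $p$-adic $L$-values and $p$-adic Abel--Jacobi images, and the associated interpolation must all be reworked with filtered $(\varphi,N)$-modules carrying a non-trivial monodromy operator, and the resulting identification must be sharp enough to produce the shape $\mathscr{L}_\pp(f,K)\cdot{\rm loc}_\pp(\kappa_f)$ exactly. A secondary subtlety is that $\mathcal{Z}_{\pp,f,0}'$ must be shown to be a well-defined class in $H^1(K_\pp,V_f)$, via the standard Iwasawa-theoretic formalism for compatible systems vanishing at the augmentation.
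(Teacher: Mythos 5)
Your overall architecture matches the paper's: the vanishing $\mathcal{Z}_{\pp,f,0}=0$ does follow from specializing the factored two-variable identity of \cite{cas-2var} (via Proposition~\ref{prop:improved} and Corollary~\ref{cor:factor}) at $(\nu_f,\mathds{1})$, and the residual $L$-value at the exceptional point is identified with $\log_{V_f}({\rm loc}_\pp(\kappa_f))$ by the semistable extension of the BDP $p$-adic Gross--Zagier formula (Theorem~\ref{thmbdp1A}), which you correctly flag as the main technical input. The gap is in the derivative step. You propose to differentiate the factored identity only in the anticyclotomic direction at $(2,\mathds{1})$ and assert that the resulting derivative of the Euler factor ``naturally decomposes'' into a weight-direction contribution (giving $\mathscr{L}_p(f)$) plus an anticyclotomic contribution (giving $\mathscr{L}_\pp(\chi_K)$). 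But a pure anticyclotomic $\partial_t$ of the one-sided identity cannot produce the weight derivative $\frac{d}{dk}\nu_k(\mathbf{a}_p)|_{k=2}$: with $k$ frozen at $2$, the factor $\nu_k(\mathbf{a}_p)$ is constant and the naive $\partial_t$ of the Euler factor sees only $\log_p(\varpi_\pp)$-type terms, so $\mathscr{L}_p(f)$ never enters. The missing device is the functional equation $\mathfrak{Z}_\infty^*=w\cdot\mathfrak{Z}_\infty$ for Howard's big Heegner points under complex conjugation (Theorem~\ref{thm:bigHPs}(2)). The paper uses it to form the symmetrized function $\mathcal{L}_p(k,t)=\mathcal{L}_\pp(k,t)-w\,\mathcal{L}_{\overline\pp}(k,k-2-t)$, shows $\mathcal{L}_p$ vanishes identically along the line $t=k/2-1$, and then applies the chain rule along that line to convert $\partial_t\mathcal{L}_p|_{(2,0)}$ (which gives $\langle\log(\mathcal{Z}'_{\pp,f,0}),\omega_f\rangle$) into $-2\,\partial_k\mathcal{L}_p|_{(2,0)}$ (which, via the Greenberg--Stevens formula for $\frac{d}{dk}\nu_k(\mathbf{a}_p)$ together with the $k$-dependence of $(p/\varpi_\pp)^{k/2-1}$, yields exactly $\mathscr{L}_\pp(f,K)$). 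Without this bridge the argument does not reach $\mathscr{L}_p(f)$.

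A secondary omission is the case split on the sign $w$. The sign enters the derivative computation as a factor $(1-w)/2$, so when $w=+1$ the line-vanishing argument degenerates to $0=0$ and gives no information. The paper handles $w=+1$ separately: the left-hand side of (\ref{eq:deriv}) vanishes from the construction of $\mathcal{Z}'_{\gamma,\infty}$ in Lemma~\ref{lem:divide}, and the right-hand side vanishes because, by \cite[Prop.~2.3.6]{howard-invmath}, $w=+1$ forces ${\rm ord}_{s=1}L_p(f,s)>2$, whence ${\rm loc}_\pp(\kappa_f)=0$ by a result of Venerucci \cite{venerucci-exp}. Your proposal does not address this degenerate case.
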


In Lemma~\ref{lem:divide} below, we define the ``derivative''
$\mathcal{Z}_{\infty}'$ for any compatible system of classes
$\mathcal{Z}_\infty=\{\mathcal{Z}_n\}_{n\geq 0}$ with $\mathcal{Z}_0=0$.
Thus the above result, which corresponds to Theorem~\ref{main} in the body of the paper,
might be seen as an exceptional zero formula
relating the derivative of ${\rm loc}_\pp(\nu_2(\mathfrak{Z}_\infty))$ at the trivial character
to classical Heegner points.

\begin{intro-rem}
As suggested in \cite[\S{8}]{LLZ}, one might view $p$-adic $L$-functions
(as described in \cite{PR:Lp} and \cite[Ch.~8]{Rubin-ES}) as
``rank 0" Euler--Iwasawa systems. In this view, it is natural to expect higher rank
Euler--Iwasawa systems to exhibit exceptional zero phenomena similar to their rank $0$ counterparts.
We would like to see the main result of this paper
as an instance of this phenomenon in ``rank $1$''.
\end{intro-rem}

\begin{intro-rem}
It would be interesting to study the formulation of our main result
in the framework afforded by Nekov{\'a}{\v{r}}'s theory of Selmer complexes \cite{nekovar310}, similarly as the
exceptional zero conjecture of Mazur--Tate--Teitelbaum \cite{mtt}  
has recently been proved by Venerucci \cite{venerucci-exp} in the rank $1$ case.
%\footnote{
%\footnote{In fact, \cite{venerucci-exp} also proves the
%exceptional zero conjecture of \cite{mtt} in the rank $1$ case, among other important results.}
%in terms of Nekov{\'a}{\v{r}}'s $p$-adic height pairings on extended Selmer groups.
\end{intro-rem}

\begin{intro-rem}
The second term in the definition $(\ref{def:Linv})$ is
precisely the $\mathscr{L}$-invariant $\mathscr{L}_\pp(\chi_K)$ appearing in
the exceptional zero formula of Ferrero--Greenberg~\cite{FG} and Gross--Koblitz~\cite{GK}
for the Kubota--Leopoldt $p$-adic $L$-function associated to the quadratic Dirichlet character
$\chi_{K}$ corresponding to $K$. It would be interesting
to find a conceptual explanation for the rather surprising appearance of $\mathscr{L}_\pp(\chi_K)$
in our derivative formula; we expect this to be related to a comparison of $p$-adic periods (cf.~\cite{cas-BF}).
\end{intro-rem}

%Beyond this analogy, in forthcoming work %\cite{cas-PR}
%we combine (\ref{intro:exczero}) with the expected connection between
%big Heegner points and the Euler systems of
%Beilinson--Flach elements of \cite{LLZ} to study a form of the exceptional zero conjecture
%of \cite{MTT} in the rank $1$ case.
%\footnote{An independent proof of a very similar result, but by rather different methods,
%has been obtained by R.~Venerucci.}

The proof of the above theorem is obtained by
computing in two different ways the value of a certain
anticyclotomic $p$-adic $L$-function $L_{\pp}(f)$
at the norm character ${\rm\mathbf{N}}_K$. The $p$-adic $L$-function $L_\pp(f)$
is defined by the interpolation of the central critical values for the Rankin--Selberg convolution of $f$
with the theta series attached to Hecke characters of $K$ of infinity type $(2+j,-j)$ with $j\geq 0$.
The character ${\rm\mathbf{N}}_K$ thus lies \emph{outside} the range of interpolation of $L_\pp(f)$,
and via a suitable extension of the methods of Bertolini--Darmon--Prasanna~\cite{bdp1}
to our setting, in Theorem~\ref{thmbdp1A} we show that
\begin{equation}\label{intro:pGZ}
L_\pp(f)({\rm\mathbf{N}}_K)
=(1-a_p(f)p^{-1})\cdot\langle{\rm log}_{V_f}({\rm loc}_\pp(\kappa_f)),\omega_{f}\rangle_{}.
\end{equation}
%where %$\widetilde{\rm log}_{V_f}=(1-\Phi)\circ{\rm log}_{V_f}$ is a certain modification of
%$\log_{V_f}$ is the Bloch--Kato logarithm, $\omega_{f}$ is the class in $\mathbf{D}_{\rm dR}(V_f)$
%associated to $f$, and $\langle,\rangle_{}$ is the canonical pairing on $\mathbf{D}_{\rm dR}(V_f)$.
%\sk

On the other hand, in \cite{cas-2var} we constructed a two-variable
$p$-adic $L$-function $L_{\pp,\xi}(\mathbf{f})$ of the variables $(\nu,\phi)$
interpolating (a shift of) the $p$-adic $L$-functions $L_\pp(\mathbf{f}_k)$ for all $k\geq 2$,
and established the equality
\begin{equation}
\label{intro:Log}
L_{\pp,\xi}(\sF)=\mathcal{L}_{\fil^+\bT}^\omega({\rm loc}_\pp(\mathfrak{Z}^{\xi^{-1}}_\infty)),
\end{equation}
where $\mathcal{L}_{\fil^+\bT}^\omega$ is a two-variable Coleman power series map whose
restriction to a certain ``line'' interpolates
\[
\biggl(1-\frac{p^{k/2-1}}{\nu_k(\mathbf{a}_p)}\biggr)^{-1}
\biggl(1-\frac{\nu_k(\mathbf{a}_p)}{p^{k/2}}\biggr)\cdot
{\rm log}_{V_{\mathbf{f}_k}}
\]
for all $k>2$. A second evaluation of $L_\pp(f)({\rm\mathbf{N}}_K)$ should thus follow by
specializing $(\ref{intro:Log})$ at $(\nu_2,\mathds{1})$.
However, because of the vanishing $(\ref{vanishing})$,  we may not directly specialize
$\mathcal{L}_{\fil^+\bT}^\omega$ at $(\nu_2,\mathds{1})$, and we are led to utilize a different argument
reminiscent of Greenberg--Stevens'~\cite{GS}. In fact, from the form of the $p$-adic multipliers
appearing in the interpolation property defining $\mathcal{L}_{\fil^+\bT}^\omega$,
we deduce a factorization
\begin{equation}\label{intro:imp}
E_\pp(\sF)\cdot L_{\pp,\xi}(\sF)=\widetilde{\mathcal{L}}_{\fil^+\bT}^\omega({\rm loc}_\pp(\mathfrak{Z}^{\xi^{-1}}_0))\nonumber
\end{equation}
upon restricting $(\ref{intro:Log})$ to an appropriate ``line'' (different from the above)
passing through $(\nu_2,\mathds{1})$, where $\widetilde{\mathcal{L}}_{\fil^+\bT}^\omega$ is a modification of
$\mathcal{L}_{\fil^+\bT}^\omega$ and $E_\pp(\sF)$ is a $p$-adic analytic
function vanishing at that point.
The vanishing of $\mathcal{Z}_{\pp,f,0}$ thus follows, and exploiting
the ``functional equation'' satisfied by $\mathfrak{Z}_\infty$,
we arrive at the equality
\begin{equation}\label{intro:lim}
\mathscr{L}_\pp(f,K)\cdot L_\pp(f)({\rm\mathbf{N}}_K)
=(1-a_p(f)p^{-1})\cdot\langle{\rm log}_{V_f}(\mathcal{Z}_{\pp,f,0}'),\omega_{f}\rangle_{}
\end{equation}
using a well-known formula for the $\mathscr{L}$-invariant as a
logarithmic derivative of $\nu_k(\mathbf{a}_p)$ at $k=2$. %Frobenius eigenvalues.  %(see \cite[Thm.~3.18]{GS}).
The proof of our exceptional zero formula then follows by combining $(\ref{intro:pGZ})$ and $(\ref{intro:lim})$.
\sk

\noindent\emph{Acknowledgements.}
We would like to thank Daniel Disegni for %enlightening
conversations related to this work, and the referee for pointing out a number of
inaccuracies in an earlier version of this paper,
as well as for helpful suggestions
which led to significant improvements in the article.

\section{Preliminaries}

For a more complete and detailed discussion of the topics that we touch upon in this section,
we refer the reader to \cite{pSI} and \cite{bdp1}. %for the content $\S{1.1}$ and $\S\S{1.2}-{1.4}$, respectively.

\subsection{Modular curves}\label{subsubsec:X}

Keep $N$ and $p\nmid N$ as in the Introduction, and let
\[
\Gamma:=\Gamma_1(N)\cap\Gamma_0(p)\subset\mathbf{SL}_2(\mathbf{Z}).
\]
%Let $S$ be a $\bZ[1/N]$-scheme.
An \emph{elliptic curve with $\Gamma$-level structure} over a $\bZ[1/N]$-scheme $S$ is a triple
$(E,t,\alpha)$ consisting of
\begin{itemize}
\item{} an elliptic curve $E$ over $S$;
\item{} a section $t:S\longrightarrow E$ of
the structure morphism of $E/S$ of exact order $N$; and
\item{} a $p$-isogeny $\alpha:E\longrightarrow E'$.
\end{itemize}

%With the natural notion of isomorphism of such triples,
The functor on $\bZ[1/N]$-schemes assigning to $S$ the set of
isomorphism classes of elliptic curves with $\Gamma$-level
structure over $S$ is representable, and we let $Y/\bZ[1/N]$ be the
corresponding fine moduli scheme. The same
moduli problem for \emph{generalized} elliptic curves
with $\Gamma$-level structure defines
a smooth geometrically connected curve $X/\bZ[1/N]$ containing
$Y$ as a open subscheme, and we refer to $Z_X:=X\smallsetminus Y$
as the \emph{cuspidal subscheme} of $X$. Removing the data of
$\alpha$ from the above moduli problem, we obtain the modular curve $X_1(N)$
of level $\Gamma_1(N)$.
\sk

For our later use (see esp.~Theorem~\ref{coleman-primitives}),
recall that if $a$ is any integer coprime to $N$, the rule
\[
\langle a\rangle(E,t,\alpha)=(E,a\cdot t,\alpha)
\]
defines an action of $(\bZ/N\bZ)^\times$ on $X$ defined over $\bZ[1/N]$,
and we let $X_0(Np)=X/(\bZ/N\bZ)^\times$ be the quotient of $X$ by this action.
\sk

%The \emph{Atkin--Lehner} automorphism $w_p$ of $X$ is
%defined by
%\[
%w_p(E,t,\alpha)=(\alpha(E),\alpha(t),\alpha^\vee),
%\]
%\noindent where $\alpha^\vee$ denotes the isogeny dual to
%$\alpha$. Note that $w_p^2=\langle p\rangle$.

The special fiber $X_{\mathbf{F}_p}:=X\times_{\bZ[1/N]}\mathbf{F}_p$
is non-smooth. In fact, it consists of two irreducible components,
denoted $C_0$ and $C_\infty$, meeting transversally at the singular points $SS$.
Let ${\rm Frob}$ be the absolute Frobenius of an elliptic curve over $\mathbf{F}_p$,
and ${\rm Ver}={\rm Frob}^\vee$ be the Verschiebung. The maps
\[
\gamma_{V}:X_1(N)_{\mathbf{F}_p}:=X_1(N)\times_{\bZ[1/N]}\mathbf{F}_p\arrow X_{\mathbf{F}_p}
\quad\quad
\gamma_{F}:X_1(N)_{\mathbf{F}_p}\arrow X_{\mathbf{F}_p}
\]
defined by sending a pair $(E,t)_{/\mathbf{F}_p}$ to $(E,t,{\rm ker}({\rm Ver}))$
and $(E,t,{\rm ker}({\rm Frob}))$ respectively, are closed immersions sending
$X_1(N)_{\mathbf{F}_p}$ isomorphically onto $C_0$ and $C_\infty$,
and mapping the supersingular points in $X_1(N)_{\mathbf{F}_p}$ bijectively onto $SS$.
The non-singular geometric points of $C_0$ (resp. $C_\infty$)
thus correspond to the moduli of triples $(E,t,\alpha)$ in characteristic $p$ with
${\rm ker}(\alpha)$ \'etale (resp. connected).
\sk

%By abuse of notation, let $X$ denote the rigid analytic space over $\bQ_p$
%defined by $X$ by analytification.
Corresponding to the preceding description of $X_{\mathbf{F}_p}$
there is a covering of $X$ as rigid analytic space over $\bQ_p$.
Consider the reduction map
\begin{equation}\label{eq:red}
{\rm red}_p:X(\bC_p)\arrow X_{\mathbf{F}_p}(\overline{\mathbf{F}}_p),
\end{equation}
let $\cW_0$ and $\cW_\infty$ be the inverse image of $C_0$ and $C_\infty$, respectively,
and let $\cZ_0\subset\cW_0$ and $\cZ_\infty\subset\cW_\infty$ be the inverse image of their non-singular points.
In the terminology of \cite{rlc}, $\cW_0$ (resp. $\cW_\infty$) is a \emph{basic wide open}
with underlying affinoid $\cZ_0$ (resp. $\cZ_\infty$).
If $x\in SS$, then $\A_x:={\rm red}_p^{-1}(x)$ is conformal to an open
annulus in $\bC_p$, and by definition we have
\[
X(\bC_p)=\cW_0\cup\cW_\infty=\cZ_0\cup\cZ_\infty\cup\cW,
\]
where $\cW=\cW_0\cap\cW_\infty=\bigcup_{x\in SS}\A_x$ is the union of the supersingular annuli.

\subsection{Modular forms and cohomology}\label{subsubsec:mf&dR}

In this section, we regard the modular curve $X$ as a scheme over a fixed base field $F$.
Let $\cE\xrightarrow{\;\pi\;}X$ be the universal
generalized elliptic curve with $\Gamma$-level structure,
set $\tilde{Z}_X=\pi^{-1}(Z_X)$, and consider the invertible sheaf on $X$
given by
\[
\underline{\omega}:=\pi_*\Omega_{\cE/X}^1(\log\widetilde{Z}_X).
\]

The space of algebraic \emph{modular
forms} (resp. \emph{cusp forms}) of weight $k$
and level $\Gamma$ defined over $F$ is
\[
M_k(X;F):=H^0(X,\underline{\omega}_F^{\otimes k})\quad\quad
(\textrm{resp.}\; S_k(X;F):=H^0(X,\underline{\omega}_F^{\otimes
k}\otimes\cI)),
\]
where $\underline{\omega}_F$ is the pullback of
$\underline{\omega}$ to $X\times_{\bQ}F$,
and $\cI$ is the ideal sheaf of $Z_X\subset X$.
If there is no risk of confusion, $F$ will be often suppressed from the notation.
Alternatively, on the open modular curve $Y$ a form
$f\in S_k(X;F)\subset M_k(X;F)$ is a rule
on quadruples $(E,t,\alpha,\omega)_{/A}$,
consisting of an $A$-valued point $(E,t,\alpha)\in Y(A)$
and a differential $\omega\in\Omega^1_{E/A}$ over arbitrary $F$-algebras $A$,
assigning to any such quadruple a value $f(E,t,\alpha,\omega)\in A$ subject
to the \emph{weight $k$ condition}
\[
f(E,t,\alpha,\lambda\omega)=\lambda^{-k}\cdot f(E,t,\alpha,\omega)\;\;\;
\textrm{for all $\lambda\in A^\times$,}
\]
depending only on the isomorphism class of the quadruple,
and compatible with base change of $F$-algebras.
 %(There is a similar interpretation for the values of $f$ at the cusps $Z_X$
%in terms of Tate curves; see \cite[Def.~1.3]{bdp1}, for example.)
The two descriptions are related by
\[
f(E,t,\alpha)=f(E,t,\alpha,\omega)\omega^k,
\]
for any chosen generator $\omega\in\Omega_{E/A}^1$.
\sk

There is a third way of thinking about modular forms that
will be useful in the following. Consider the
\emph{relative de Rham cohomology} of $\cE/X$:
\[
\ccL:=\mathbb{R}^1\pi_*(0\arrow\cO_\cE
\arrow\Omega_{\cE/X}^1(\log\widetilde{Z}_X)\arrow 0),
\]
which fits in a short exact sequence
\begin{equation}\label{hodgedeRham}
0\arrow\underline{\omega}\arrow\ccL\arrow
\underline{\omega}^{-1}\arrow 0
\end{equation}
of sheaves on $X$ and is equipped with
a non-degenerate pairing
\begin{equation}\label{PDL}
\langle,\rangle:\ccL\times\ccL\arrow\mathcal{O}_X
\end{equation}
coming from the Hodge filtration and the Poincar\'e pairing
on the de Rham cohomology of the fibers. By the Kodaira--Spencer isomorphism
\begin{equation}\label{KS}
\sigma:\underline{\omega}^{\otimes 2}\cong\Omega_X^1(\log Z_X)\nonumber
\end{equation}
given by $\sigma(\omega\otimes\eta)=\langle\omega,\nabla\eta\rangle$, where
\[
\nabla:\ccL\arrow\ccL\otimes\Omega_X^1(\log Z_X)
\]
is the Gauss--Manin connection, a modular form $f$
of weight $r+2$ and level $\Gamma$ defines a section $\omega_f$ of the sheaf
$\underline{\omega}^{\otimes r}\otimes\Omega_X^1(\log Z_X)$ by the rule
\begin{equation}\label{f-wf}
\omega_f(E,t,\alpha):=f(E,t,\alpha,\omega)\omega^r\otimes\sigma(\omega^2).\nonumber
\end{equation}
If $f$ is a cusp form, then the above rule defines a section
$\omega_f$ of $\underline{\omega}^{\otimes r}\otimes\Omega_X^1$, thus
yielding an identification
\[
S_{r+2}(X)\simeq H^0(X,\underline{\omega}^{\otimes r}\otimes\Omega_X^1).
\]
For each $r\geq 0$, let $\ccL_r:={\rm Sym}^{r}\ccL$ (with $\ccL_0:=\cO_X$),
%. The
%previous structures on $\ccL$ are inherited by $\ccL_r$, so in
%particular we can consider (with a slight abuse of notation) the complex
%\[
%\ccL_r^\bullet:\ccL_r\xrightarrow{\;\;\nabla\;\;}\ccL_r\otimes
%\Omega_X^1(\log Z_X),
%\]
and define the \emph{de Rham cohomology} of $X$ (attached to $\ccL_r$) as
the hypercohomology group
\begin{equation}\label{def:HdR}
H_{\rm dR}^1(X,\ccL_r,\nabla):=\mathbb{H}^1(\ccL_r^\bullet:
\ccL_r\xrightarrow{\;\nabla\;}\ccL_r\otimes
\Omega_X^1(\log Z_X)).
\end{equation}
Twisting by the ideal sheaf $\cI$ gives rise to
the subcomplex $\ccL_r^\bullet\otimes\cI\longrightarrow\ccL_r^\bullet$,
and the weight $r+2$ \emph{parabolic cohomology} of $X$ is defined by
\begin{equation}\label{def:Hpar}
H^1_{\rm par}(X,\ccL_r,\nabla)
:={\rm image}(\mathbb{H}^1(\ccL_r^\bullet\otimes\cI)\arrow
H_{\rm dR}^1(X,\ccL_r,\nabla)).
\end{equation}

The exact sequence $(\ref{hodgedeRham})$ induces the short exact sequence
\begin{equation}\label{hodgefilpar}
0\arrow H^0(X,\underline{\omega}^{\otimes r}\otimes\Omega_X^1)\arrow H_{\rm
par}^1(X,\ccL_r,\nabla)\arrow
H^1(X,\underline{\omega}^{\otimes-r})\arrow 0,
\end{equation}
and hence the above assignment $f\mapsto\omega_f$
identifies $S_{r+2}(X)$ with a subspace of $H^1_{\rm par}(X,\ccL_r,\nabla)$.
In addition, the pairing $(\ref{PDL})$ induces a non-degenerate pairing
\begin{equation}\label{PDpar}
\langle,\rangle:H_{\rm par}^1(X,\ccL_r,\nabla)\times
H_{\rm par}^1(X,\ccL_r,\nabla)\arrow F
\end{equation}
with respect to which $(\ref{hodgefilpar})$ is self-dual.

\subsection{$p$-new forms}

Consider the two degeneracy maps
\begin{align*}
\pi_1, \pi_2&:X\arrow X_1(N)%,\quad\quad\pi_2:X\longrightarrow X_1(N),
\end{align*}
defined by sending, under the moduli interpretation,
a triple $(E,t,\alpha)$ to the pairs
$(E,t)$ and $(\alpha(E),\alpha(t))$, respectively.
These morphisms induce maps
\begin{align*}
\pi_1^*,\pi_2^*&:H_{\rm par}^1(X_1(N),\ccL_r,\nabla)\arrow H_{\rm
par}^1(X,\ccL_r,\nabla),
\end{align*}
where $H^1_{\rm par}(X_1(N),\ccL_r,\nabla)$ is defined as in $(\ref{def:Hpar})$
using the analogous objects over $X_1(N)$.

\begin{lem}
The map $\pi_1^*\oplus\pi_2^*$ %:H_{\rm par}^1(X_1(N),\ccL_r,\nabla)^{\oplus 2}\longrightarrow H_{\rm par}^1(X,\ccL_r,\nabla)$
is injective.
\end{lem}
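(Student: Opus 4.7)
The plan is to deduce injectivity from the $q$-expansion principle combined with Atkin--Lehner oldform theory. Since injectivity of a morphism of $F$-vector spaces is preserved under extension of scalars, I may work over $\bC$. By the Eichler--Shimura isomorphism (which splits the Hodge filtration $(\ref{hodgefilpar})$ over $\bC$ via complex conjugation) one has a decomposition
\[
H^1_{\rm par}(X_1(N),\ccL_r)\otimes_F\bC\simeq S_{r+2}(\Gamma_1(N);\bC)\oplus\overline{S_{r+2}(\Gamma_1(N);\bC)},
\]
and both $\pi_1^*$ and $\pi_2^*$ respect this decomposition (being algebraic and hence commuting with complex conjugation). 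It therefore suffices to prove that $\pi_1^*\oplus\pi_2^*$ is injective on the cusp form summand $S_{r+2}(\Gamma_1(N);\bC)^{\oplus 2}$.

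Next, I would work out the action of $\pi_1^*$ and $\pi_2^*$ on $q$-expansions at a cusp above $\infty$ on $X$. From the moduli interpretation and the Tate-curve model, $\pi_1:(E,t,\alpha)\mapsto(E,t)$ preserves the Tate parameter whereas $\pi_2:(E,t,\alpha)\mapsto(\alpha(E),\alpha(t))$ sends $q\mapsto q^p$; accordingly $\pi_1^*$ acts on $q$-expansions as the identity, while $\pi_2^*$ acts as $f(q)\mapsto c\cdot f(q^p)$ for some nonzero constant $c$ (tracking the effect of the $p$-isogeny on $\underline{\omega}^{\otimes r}\otimes\Omega^1$). Writing $v(q)=\sum a_n q^n$ and $w(q)=\sum b_n q^n$, any $(v,w)\in\ker(\pi_1^*\oplus\pi_2^*)$ must satisfy $a_n=0$ for all $n$ coprime to $p$, together with $a_{pn}+c\,b_n=0$ for every $n\geq 1$.

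It remains to show that any $v\in S_{r+2}(\Gamma_1(N);\bC)$ with $a_n(v)=0$ for all $\gcd(n,p)=1$ must vanish (whereupon $w=0$ follows from the relations above). I would argue via the Atkin--Lehner decomposition $v=\sum_{g,d}c_{g,d}V_d g$, with $g$ running over normalized newforms of some level $M_g\mid N$ and $d\mid N/M_g$; crucially, since $p\nmid N$ one has $\gcd(d,p)=1$ throughout, yielding the factorization
\[
L^{(p)}(v,s)=\sum_{g}L^{(p)}(g,s)\cdot\biggl(\sum_{d\mid N/M_g}c_{g,d}\,d^{-s}\biggr),
\]
where $L^{(p)}(\,\cdot\,,s)$ denotes the Dirichlet series summed over indices coprime to $p$. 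By strong multiplicity one, the Euler products $L^{(p)}(g,s)$ attached to distinct newforms are linearly independent as Dirichlet series; for each fixed $g$ the monomials $d^{-s}$ are likewise linearly independent. The vanishing of $L^{(p)}(v,s)$ therefore forces every $c_{g,d}$ to vanish. The most delicate point I anticipate is verifying the non-vanishing of the scalar $c$ in the second step, which requires carefully tracking the effect of the $p$-isogeny on the Hodge bundle of the Tate curve; the rest of the argument is then formal.
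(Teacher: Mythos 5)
The paper disposes of this lemma in one line by citing \cite[Prop.~4.1]{pSI}, so there is no internal argument to compare against; what you have written is a self-contained proof in place of that citation, and it is essentially sound. Your route through Eichler--Shimura over $\bC$ and Atkin--Lehner $q$-expansion theory is the classical one: reduce to the holomorphic summand by observing that $\pi_1^*,\pi_2^*$ commute with complex conjugation, translate $\pi_1^*v+\pi_2^*w=0$ into the $q$-expansion identity $v(q)=-c\,w(q^p)$, and then invoke the fact that a cusp form of level prime to $p$ with all prime-to-$p$ Fourier coefficients vanishing must be zero. Two small remarks. First, your worry about the constant $c$ is unfounded: tracking the isogeny $\alpha:{\rm Tate}(q)\to{\rm Tate}(q^p)$ through the Tate-curve model one finds $c$ to be an explicit nonzero power of $p$ depending only on $r$ and the normalization of $\underline{\omega}^{\otimes r}\otimes\Omega^1_X$, so no cancellation is possible. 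Second, the linear-independence step as phrased is slightly off: the two facts you quote (that the $L^{(p)}(g,s)$ are linearly independent over $\bC$, and that the monomials $d^{-s}$ are linearly independent) do not by themselves give linear independence of the $L^{(p)}(g,s)$ over the ring of Dirichlet polynomials $\sum_{d\mid N}c_d d^{-s}$, which is what the factorization actually requires. The correct argument is to look at coefficients $a_\ell(v)$ for primes $\ell\nmid Np$ to force $c_{g,1}=0$ via strong multiplicity one, then induct on the number of prime factors of $d$ using $a_{\ell d}(v)$; this is the standard proof that the oldform decomposition is detected by prime-to-$p$ Fourier coefficients when $p\nmid N$. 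Finally, it is worth noting that a proof avoiding complex geometry entirely --- and more in keeping with the cohomological framework of this paper --- is available: one computes the Gram matrix of $\pi_{i,*}\pi_j^*$, which has the shape $\begin{pmatrix}p+1 & *\\ * & p+1\end{pmatrix}$ with off-diagonal entries essentially $T_p$, and shows its determinant is invertible on $H^1_{\rm par}(X_1(N),\ccL_r,\nabla)$ using the Ramanujan--Petersson (or even a weaker Rankin) bound $|a_p(g)|<p^{(r+1)/2}\cdot(p+1)/p^{1/2}$; this is presumably the argument in the cited reference, and it has the advantage of working uniformly over any base field and not requiring a detour through Hodge theory.
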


\begin{proof}
This is \cite[Prop.~4.1]{pSI}.
\end{proof}

Define the \emph{$p$-old} subspace $H_{\rm
par}^1(X,\ccL_r,\nabla)^{p{\rm -old}}$ of
$H_{\rm par}^1(X,\ccL_r,\nabla)$ to be the image of $\pi_1^*\oplus\pi_2^*$,
and the \emph{$p$-new} subspace $H_{\rm par}^1(X,\ccL_r,\nabla)^{p-{\rm new}}$
to be the orthogonal complement of the $p$-old subspace under the Poincar\'e pairing $(\ref{PDpar})$.
The space of $p$-new cusp forms of weight $k$ and level $\Gamma$ is defined by
\[
S_{r+2}(X)^{p{\rm -new}}:=S_{r+2}(X)\cap H_{\rm
par}^1(X,\ccL_r,\nabla)^{p{\rm -new}},
\]
viewing $S_{r+2}(X)$ as subspace of $H_{\rm par}^1(X,\ccL_r,\nabla)$
in the form described above.

\subsection{$p$-adic modular forms}\label{sec:p-adic}

Recall that the \emph{Hasse invariant} is a modular form $H$ over $\mathbf{F}_p$ %(in fact, its $q$-expansion is $A(q)=1$)
of level $1$ and weight $p-1$ with the property that an elliptic curve $E$ over
an $\mathbf{F}_p$-algebra $B$ is \emph{ordinary} if and only if
$H(E,\omega)$ is a unit in $B$ for some (or equivalently, any) generator $\omega\in\Omega_{E/B}^1$.
\sk

Let $R$ be a $p$-adic ring, i.e., a ring which is isomorphic to its pro-$p$ completion.
A \emph{$p$-adic modular form} of tame level $N$ and weight $k$ defined over $R$ is
a rule assigning to every triple $(E,t,\omega)_{/A}$, over an arbitrary $p$-adic $R$-algebra $A$,
consisting of:
\begin{itemize}
\item{} an elliptic curve $E/A$ such that the reduction $E\times_A A/pA$ is ordinary;
\item{} a section $t:{\rm Spec}(A)\longrightarrow E$ of
the structure morphism of $E/A$ of exact order $N$; and
\item{} a differential $\omega\in\Omega_{E/A}^1$,
\end{itemize}
an element $f(E,t,\omega)\in A$ depending only on the isomorphism class
of $(E,t,\omega)_{/A}$, homogeneous of degree $-k$ in the third
entry, and compatible with base change of $p$-adic $R$-algebras.
Let $\cM_k(N;R)$ be the $R$-module of $p$-adic modular forms of weight $k$
and level $N$ defined over $R$; as before, if there is no risk of confusion
$R$ will be often suppressed from the notation.
\sk

Similarly as for classical modular forms, it will be convenient to think of
$p$-adic modular forms of weight $k$ as sections of the sheaf $\underline{\omega}^{\otimes k}$
over a certain subset of the rigid analytic space $X(\bC_p)$.
Let $E_{p-1}$ be the normalized  Eisenstein series of weight $p-1$ (recall that $p\geq 5$),
and define the \emph{ordinary locus} of $X_1(N)$ by
\[
X_1(N)^{\rm ord}:=\{x\in X_1(N)(\bC_p)\;\colon\;\vert E_{p-1}(E_x,\omega_x)\vert_p\geq 1\},
\]
where $E_x/\bC_p$ is a generalized elliptic curve corresponding to $x$
under the moduli interpretation, $\omega_x\in\Omega_{E_x/\bC_p}^1$
is a regular differential on $E_x$, chosen so that it extends to a regular
differential over $\cO_{\bC_p}$ if $E_x$ has good reduction at $p$,
or corresponds to the canonical differential on the Tate curve if $x$
lies in the residue disc of a cusp, and $\vert\cdot\vert_p$ is the absolute value on $\bC_p$
normalized so that $\vert p\vert_p=p^{-1}$.
Since $E_{p-1}$ reduces to the Hasse invariant $H$ modulo $p$,
it follows that the points $x\in X_1(N)^{\rm ord}$
correspond to pairs $(E_x,t_x)$ with $E_x$ having ordinary reduction modulo $p$.
Thus the assignment $f\mapsto (x\mapsto f(E_x,t_x,\omega_x)\omega_x^k)$, for any chosen
generator $\omega_x\in\Omega_{E_x/\bC_p}^1$, defines an identification
\[
\cM_k(N)\simeq H^0(X_1(N)^{\rm ord},\underline{\omega}^{\otimes k}).
\]
%The holomorphy
%condition means that the value of $f$ at the ordinary triple
%$({\rm Tate}(q),t,\omega_{\rm can})$ over $\bZ((q^{1/N}))\otimes
%R[\zeta_N]$ lies in $\bZ[[q^{1/N}]]\otimes R[\zeta_N]$ for each
%order $N$ section $t$ of ${\rm Tate}(q)$ over
%$\bZ((q^{1/N}))\otimes R[\zeta_N]$.
%
%\begin{prop}\label{q-exp}
%The $q$-expansion maps
%\[
%\cM_k(N;R)\longrightarrow\bZ((q^{1/N}))\otimes R[\zeta_N]
%\]
%defined by $f\mapsto f({\rm Tate}(q),t,\omega_{\rm
%can})_{/\bZ((q^{1/N}))\otimes R[\zeta_N]}$ are injective. Their
%image is identified with the module of Serre's $p$-adic modular
%forms of level $N$ and weight $k$ defined over $R$.
%\end{prop}
%
%\begin{proof}
%The first assertion is known as the $q$-expansion principle. For a
%proof see \cite{Katz350}. The second assertion is Proposition
%2.7.2 of \cite{Katz350}.
%\end{proof}

Let $I:=\{v\in\bQ\;\colon\;0< v\leq\frac{p}{p+1}\}$,
and for any $v\in I$ define
\[
X_1(N)(v):=\{x\in X_1(N)(\bC_p)\;\colon\;\vert E_{p-1}(E_x,\omega_x)\vert_p>p^{-v}\}.
\]
The space of \emph{overconvergent $p$-adic modular forms}
of weight $k$ and tame level $N$ is given by
\[
\cM_k^\dagger(N)=\varinjlim_{v}H^0(X_1(N)(v),\underline{\omega}^{\otimes k}),
\]
where the transition maps
$H^0(X_1(N)(v),\underline{\omega}^{\otimes k})
\longrightarrow H^0(X_1(N)(v'),\underline{\omega}^{\otimes k})$,
for $v'<v$ in $I$, are given by restriction; since these maps are injective,
$\cM_k^\dagger(N)$ is naturally a subspace of $\cM_k(N)$.
\sk

By the theory of the \emph{canonical subgroup}
(see \cite[Thm.~3.1]{Katz350}), if $(E_x,t_x)$ corresponds to a point $x$
in $X_1(N)(\frac{p}{p+1})$, the elliptic curve $E_x$ admits a distinguished subgroup
${\rm can}(E_x)\subset E_x[p]$ of order $p$ reducing to the kernel of
Frobenius in characteristic $p$. The rule
\[
(E_x,t_x)\mapsto (E_x,t_x,\alpha_{\rm can}),
\]
where $\alpha_{\rm can}:E_x\mapsto E_x/{\rm can}(E_x)$ is the projection,
defines  rigid morphism $X_1(N)(\frac{p}{p+1})\longrightarrow\cW_\infty$,
and hence if $f$ is a modular form of weight $k$ and level $\Gamma$,
then the restriction $f\vert_{\cW_\infty}$ gives an overconvergent
$p$-adic modular form of weight $k$ and tame level $N$.

\subsection{Ordinary CM points}\label{subsubsec:A}

Let $K$ be an imaginary quadratic field with ring of integers $\cO_K$ equipped
with a cyclic ideal $\mathfrak{N}\subset\cO_K$ such that
\[
\cO_K/\mathfrak{N}\simeq\bZ/N\bZ.
\]
Fix an elliptic curve $A$ defined over the Hilbert class field $H$ of $K$ with
${\rm End}_H(A)\simeq\cO_K$ having good reduction at the primes above $p$,
and choose a $\Gamma_1(N)$-level structure $t_A\in A[\mathfrak{N}]$ and
a regular differential $\omega_A\in\Omega^1_{A/H}$. The identification
${\rm End}_H(A)=\cO_K$ is normalized so that $\lambda\in\cO_K$ acts as
\[
\lambda^*\omega=\lambda\omega\quad\textrm{for all $\omega\in\Omega_{A/H}^1$.}
\]

For every integer $c\geq 1$ prime to $Np$,
let $\cO_c=\bZ+c\cO_K$ be the order of $K$ of conductor $c$,
and denote by ${\rm Isog}_c^\mathfrak{N}(A)$ the set of %(isomorphism classes of)
elliptic curves $A'$ with CM by $\cO_c$ equipped with an isogeny $\varphi:A\longrightarrow A'$
satisfying ${\rm ker}(\varphi)\cap A[\mathfrak{N}]=\{0\}$.
\sk

The semigroup of projective rank one $\cO_c$-modules $\fa\subset\cO_c$ prime to
$\mathfrak{N}\cap\cO_c$ acts on ${\rm
Isog}_c^{\mathfrak{N}}(A)$ by the rule
\[
\fa*(\varphi:A\arrow
A')=\varphi_\fa\varphi:A\arrow A'\arrow
A'_\fa,
\]
where $A'_\fa:=A'/A'[\fa]$ and $\varphi_\fa:A'\longrightarrow A'_{\fa}$
is the natural projection. It is easily seen that this induces an action
of ${\rm Pic}(\cO_c)$ on ${\rm Isog}_c^{\mathfrak{N}}(A)$.
%Moreover, for every $(\varphi:A\longrightarrow A')\in{\rm Isog}_c^{\mathfrak{N}}(A)$ we set
%$t_{A'}=\varphi(t_A)$ and $\omega_{A'}=(\varphi^*)^{-1}(\omega_A)$,
%and hence if $\fa\in P(\cO_c)$ we are led to define
%\[
%\fa*(A',t_{A'},\omega_{A'})=(A_\fa',\varphi_\fa(t_{A'}),(\varphi_\fa^*)^{-1}(\omega_{A'})).
%\]
\sk

Throughout this paper, we shall assume that $p=\pp\overline{\pp}$ splits in $K$,
and let $\mathfrak{p}$ be the prime of $K$ above $p$ induced by our fixed embedding
$\overline{\bQ}_p\overset{\imath_p}\hookrightarrow\bC_p$.
Thus if $A'$ is an elliptic curve with CM by $\cO_c$ defined over the ring class field $H_c$ of $K$ of conductor $c$,
then $A'$ has ordinary reduction at $p$, and $A'[\pp]\subset A'[p]$ is the canonical subgroup.
In the following, we will let $\alpha_\pp'=\alpha_{\rm can}:A'\longrightarrow A'/A'[\pp]$
denote the projection.

%Recall the differential operators $\delta_k$ and $\theta$
%associated with the Hodge and unit root decompositions of the
%Hodge filtration $(\ref{hodgedeRham})$ respectively, as described
%in $\S\S 1.3, 1.4$ of \cite{bdp1}. Though they do \emph{not}
%preserve holomorphy, %or overconvergence,
%they do preserve the algebraicity properties of modular forms when
%evaluated at CM points:
%
%\begin{prop}[Shimura, Katz]
%Let $f\in M_k(X;F)$ be a modular form of weight $k$ defined over
%$F$, and let $(A',t',\omega')_{/F}$ be a marked ordinary elliptic
%curve with complex multiplication by an order in $K$. Let
%$\Omega_\infty\in\bC^\times$ and $\Omega_p\in\bC_p^\times$ be the
%periods defined by the relations $\omega'_\bC=\Omega_\infty dz$
%and $\omega'_{\bC_p}=\Omega_p\omega_{\rm can}$, respectively,
%where $z$ is the standard coordinate in $\bC$, and $\omega_{\rm
%can}$ the pullback of the canonical differential on
%$\mathbf{G}_m$. Then the equality
%\begin{equation}\label{shimura-katz}
%\frac{(\delta_k^jf_\infty)(A',t',\alpha_\pp,\omega')}{\Omega_\infty^{k+2j}}
%=\frac{(\theta^jf_p)(A',t',\omega')}{\Omega_p^{k+2j}}
%end{equation}
%\noindent holds in $F$. (The comparison in $F$ being made with the
%fixed embeddings $\iota_\infty:F\rightarrow\bC$ and
%$\iota_p:F\rightarrow\bC_p$.)
%\end{prop}

%\begin{proof}
%See \cite{shimura102}, and $(2.6.7)$ of \cite{Katz49}.
%\end{proof}

\subsection{Generalized Heegner cycles}

For any $r>0$, let $W_r$ be the Kuga--Sato variety
over
\[
X_0:=X_1(Np)
\]
obtained as the canonical desingularization of the
$r$-fold self-product of the universal generalized elliptic curve over
$X_0$, and define
\begin{equation}\label{genKS}
X_r:=W_r\times A^r,
\end{equation}
where $A/H$ is the elliptic curve with CM by $\cO_K$ fixed in the preceding section. %Section~\ref{subsubsec:A}$.
%Also, set $X_0:=X_1(Np)$.
\sk

The variety $X_r$ is fibered over $X_0$, %(and hence also over $X$),
and the fiber over a non-cuspidal point
$x$ associated with a pair $(E_x,t_{x})$ %(with $t_x$ a $\Gamma_1(Np)$-level structure on $E_x$)
is identified with $E_x^r\times A^r$.
Thus for every isogeny $\varphi:A\longrightarrow A'$
in ${\rm Isog}_c^\mathfrak{N}(A)$, we may consider the cycle
\[
\Upsilon_\varphi:=(\Gamma_\varphi^{\rm t})^r\;\subset\;(A'\times A)^r\;\subset\; X_r,
\]
where $\Gamma_\varphi^{\rm t}$ is the transpose of the graph of $\varphi$, and following
\cite[\S{2.3}]{bdp1} define the \emph{generalized Heegner cycle} associated with $\varphi$ by
\begin{equation}\label{genheeg}
\Delta_\varphi:=\epsilon_{X}\Upsilon_\varphi,
\end{equation}
where $\epsilon_{X}$ is the idempotent defined in [\emph{loc.cit.}, (2.1.1)]
(with $X_0$ in place of their curve $C=X_1(N)$).
By \cite[Prop.~2.7]{bdp1}, the cycles $\Delta_\varphi$ are homologically trivial; %cycles of codimension $r+1$ on $X_r$,
by abuse of notation, we shall still denote by $\Delta_\varphi$ the classes they define
in the Chow group ${\rm CH}^{r+1}(X_r)_0$ with rational coefficients.
For $r=0$, set
\[
\Delta_\varphi:=(A',t_{A'})-(\infty),
\]
where $t_{A'}\in A'[Np]$ is a $\Gamma_1(Np)$-level structure contained in $A'[\mathfrak{Np}]$,
and $\infty$ is the cusp $({\rm Tate}(q),\zeta_{Np})$. %of $X_1(Np)$.

  % ------------------------------------ %

               % CHAPTER 2 %

  % ------------------------------------ %

\section{A semistable non-crystalline setting}

This section is aimed at proving Theorem~\ref{thmbdp1A} below,
which extends the $p$-adic Gross--Zagier formula due Bertolini--Darmon--Prasanna~\cite{bdp1}
in the good reduction case to the semistable non-crystalline setting.

\subsection{$p$-adic Abel--Jacobi maps}\label{sec:p-adic-AJ}

Let $F$ be a finite unramified extension of $\bQ_p$,
denote by $\cO_F$ the ring of integers of $F$, and let $\kappa$ be the residue field.
The generalized Kuga--Sato variety $X_{r}$,
which was defined in $(\ref{genKS})$ as a scheme over $\bZ[1/Np]$,
has semistable reduction at $p$. In other words,
there exits a proper scheme $\mathcal{X}_r$ over $\cO_F$
with generic fiber $X_r\times_{\bZ[1/Np]}F$ and with
special fiber $\cX_r\times_{\cO_F}\kappa$
whose only singularities are divisors with normal crossings.
\sk

By the work of Hyodo--Kato \cite{hk},
attached to $X_r$ there are log-crystalline cohomology
groups $\hcris^j(\cX_r\times_{\cO_F}\kappa)$, which are $\cO_F$-modules of finite rank equipped with
a semilinear Frobenius automorphism $\Phi$
and a linear nilpotent monodromy operator $N$ satisfying
\begin{displaymath}
N\Phi=p\Phi N.
\end{displaymath}
Moreover, for each choice of a uniformizer of
$\cO_F$ there is a comparison isomorphism
\begin{displaymath}
\hcris^j(\cX_r\times_{\cO_F}\kappa)\otimes_{\cO_F}F
\simeq H_{\rm dR}^j(X_r/F)
\end{displaymath}
endowing the algebraic de Rham cohomology groups
$H_{\rm dR}^j(X_r/F)$ with the structure of
%\emph{filtered Frobenius monodromy module} (or
filtered $(\Phi,N)$-modules. In the following,
we shall restrict our attention to the middle degree cohomology, i.e.,
we set $j=2r+1$.
\sk

Let $G_F:={\rm Gal}(\overline{F}/F)$ be the absolute Galois group of $F$,
and consider the $p$-adic $G_F$-representation given by
\[
V_r:=H_{\textrm{\'et}}^{2r+1}(X_r\times_F\overline{F},\bQ_p).
\]
Applying Fontaine's functor $\mathbf{D}_{\rm st}$ to $V_r$ yields another filtered
$(\Phi,N)$-module associated to $X_r$.

\begin{thm}[Tsuji]\label{hk}
The $p$-adic $G_F$-representation $V_r$ is semistable,
and there is a natural
isomorphism%\footnote{depending on the choice of a uniformizer of $\cO_F$}
\begin{equation}\label{comp}
\mathbf{D}_{\rm st}(V_r)\simeq H_{\rm dR}^{2r+1}(X_r/F)\nonumber
\end{equation}
compatible with all structures. In particular,
the assignment $V\mapsto\mathbf{D}_{\rm st}(V)$ induces an isomorphism
%\begin{equation}\label{compext}
$\Ext_{\rm st}(\bQ_p,V_r)\simeq\Ext_{{\rm
Mod}_F(\Phi,N)}(F,H_{\rm dR}^{2r+1}(X_r/F))$.
%\end{equation}
\end{thm}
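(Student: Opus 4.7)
The plan is to reduce the statement to Tsuji's proof of the $C_{\mathrm{st}}$-conjecture, applied to the semistable model $\cX_r/\cO_F$ alluded to at the beginning of the section, together with the formalism of Fontaine's functor $\mathbf{D}_{\rm st}$. The statement is really the combination of three standard inputs: existence of a semistable integral model, Tsuji's comparison theorem, and Fontaine's fully faithful tensor functor on semistable representations.

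\emph{Step 1: semistable model.} I would first verify that $X_r=W_r\times A^r$ does admit a proper model $\cX_r$ over $\cO_F$ with normal crossings special fiber. Since the product of two semistable schemes over a DVR is semistable, it suffices to handle the two factors separately. For $W_r$, the canonical desingularization of the $r$-fold self-product of the universal generalized elliptic curve over $X_0=X_1(Np)$, the construction of Deligne--Rapoport and Scholl provides a regular proper model over $\bZ_{(p)}[1/N]$ with semistable reduction at $p$ (the singularities of the special fiber come from the $\Gamma_0(p)$-structure and from supersingular fibers and are resolved by blow-ups along smooth divisors). For $A^r$, the elliptic curve $A/H$ was chosen in \S\ref{subsubsec:A} to have good reduction at primes above $p$, so $A^r$ extends to a smooth proper scheme over $\cO_F$ provided $F$ is chosen large enough (and in particular unramified, as assumed). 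The product then furnishes $\cX_r$.

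\emph{Step 2: Tsuji's comparison theorem.} I would then invoke Tsuji's $C_{\mathrm{st}}$ theorem applied to $\cX_r/\cO_F$: for any proper semistable scheme over $\cO_F$ the $G_F$-representation $H_{\textrm{\'et}}^{j}(X\times_F\overline{F},\bQ_p)$ is semistable in the sense of Fontaine, and there is a natural isomorphism
\[
\mathbf{D}_{\rm st}\bigl(H_{\textrm{\'et}}^{j}(X\times_F\overline{F},\bQ_p)\bigr)\simeq\hcris^{j}(\cX_r\times_{\cO_F}\kappa)\otimes_{\cO_F}F
\]
of filtered $(\Phi,N)$-modules. Choosing a uniformizer of $\cO_F$ and composing with the Hyodo--Kato isomorphism recorded in \S\ref{sec:p-adic-AJ} identifies the right-hand side with $H_{\rm dR}^{j}(X_r/F)$, compatibly with all structures. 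Specializing to $j=2r+1$ yields the claimed comparison for $V_r$.

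\emph{Step 3: the $\Ext$ statement.} The final assertion is a formal corollary of the fact that $\mathbf{D}_{\rm st}$ is a fully faithful exact tensor functor from the category of semistable $p$-adic $G_F$-representations to the category $\mathrm{Mod}_F(\Phi,N)$ of filtered $(\Phi,N)$-modules (this is due to Fontaine). Since both $\bQ_p$ and $V_r$ are semistable by Step~2, and extensions of semistable representations by semistable representations are again semistable, the functor $\mathbf{D}_{\rm st}$ induces a bijection on extension classes, which is the desired isomorphism.

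The only real obstacle is Step~1, i.e.\ identifying $\cX_r$ as a genuinely proper semistable $\cO_F$-model. Once that is in place, Steps~2 and~3 are direct citations of Tsuji's and Fontaine's theorems. Given the product structure $X_r=W_r\times A^r$ and the standing good reduction hypothesis on $A$, the construction reduces to the well-documented case of the Kuga--Sato variety $W_r$, so I would expect this to be addressed by citing the appropriate resolution-of-singularities result and deferring the rest to the cited works of Tsuji and Fontaine.
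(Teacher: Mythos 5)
The paper states this theorem without proof, as a citation of Tsuji's $C_{\rm st}$ theorem (together with the Hyodo--Kato isomorphism and Fontaine's formalism), so there is no internal argument to compare against; your Steps~1 and~2 supply the right sources and the right reduction, and the remark that the product of the Kuga--Sato semistable model with the (smooth, good-reduction) abelian scheme extending $A^r$ furnishes $\cX_r$ is exactly the point implicit in the paper's setup.

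However, Step~3 contains a false intermediate claim: it is \emph{not} true that ``extensions of semistable representations by semistable representations are again semistable.'' Already for $0\to\bQ_p(1)\to E\to\bQ_p\to 0$ the space of all extensions $H^1(G_F,\bQ_p(1))\simeq\widehat{F^\times}\otimes\bQ_p$ has dimension $[F:\bQ_p]+1$, while the semistable (= crystalline here) ones form the proper subspace $H^1_f$ of dimension $[F:\bQ_p]$. If your claim were true one would have $H^1_{\rm st}(F,V_r)=H^1(F,V_r)$, which is certainly not what the theorem asserts. The correct argument for the $\Ext$ isomorphism is: $(i)$ by Fontaine and Colmez--Fontaine, $\mathbf{D}_{\rm st}$ is an equivalence of categories between semistable $p$-adic $G_F$-representations and \emph{weakly admissible} filtered $(\Phi,N)$-modules over $F$, hence induces an isomorphism $\Ext_{\rm st}(\bQ_p,V_r)\simeq\Ext_{{\rm MF}^{\rm wa}_F(\Phi,N)}(F,\mathbf{D}_{\rm st}(V_r))$; and $(ii)$ the category of weakly admissible filtered $(\Phi,N)$-modules is closed under extensions in the ambient abelian category ${\rm Mod}_F(\Phi,N)$ (a standard slope computation using $t_H$, $t_N$), so $\Ext_{{\rm MF}^{\rm wa}_F(\Phi,N)}(F,D)=\Ext_{{\rm Mod}_F(\Phi,N)}(F,D)$ for weakly admissible $D$. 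Combining $(i)$ and $(ii)$ with the identification $\mathbf{D}_{\rm st}(V_r)\simeq H^{2r+1}_{\rm dR}(X_r/F)$ from Step~2 gives the stated isomorphism. The stability-under-extensions input belongs on the filtered-module side, not the Galois-representation side.
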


Here, $\Ext_{\rm st}(\bQ_p,V_r)\simeq H_{\rm st}^1(F,V_r):={\rm ker}(H^1(F,V_r)
\longrightarrow H^1(F,V_r\otimes_{\bQ_p}\mathbf{B}_{\rm st}))$
is the group of extensions of the trivial representation $\bQ_p$ by $V_r$
in the category of semistable $p$-adic $G_F$-representations.
%Since $X_r$ has semistable reduction,
%Theorem~\ref{comp} follows from the main result of \cite{tsuji}.
% proving the ``semistable conjecture'' of Fontaine--Jannsen.
\sk

The idempotent $\epsilon_X$ used in the definition $(\ref{genheeg})$ of
the generalized Heegner cycles $\Delta_\varphi$ acts as a projector on the various cohomology
groups associated to the variety $X_r$. Let $V_r(r+1)$ be the $(r+1)$-st Tate twist of $V_r$,
and consider the \'etale Abel--Jacobi map
\[
{\rm AJ}_F^{\textrm{\'et}}:{\rm
CH}^{r+1}(X_r)_0(F)\arrow{\rm
Ext}_{{\rm Rep}_{G_F}}(\bQ_p,\epsilon_XV_r(r+1))=H^1(F,\epsilon_XV_r(r+1))
\]
constructed in \cite{nekovarCRM}.
By [\emph{loc.cit.}, Thm.~3.1$(ii)$],
the image of ${\rm AJ}_F^{\textrm{\'et}}$
lands in $H_{\rm st}^1(F,\epsilon_XV_r(r+1))$, %\subset H^1(F,V_r(r+1))$,
and hence via the comparison isomorphism (\ref{comp})
it can be seen as taking values in the group
\[
\Ext_{{\rm
Mod}_F(\Phi,N)}(F,\epsilon_XH_{\rm dR}^{2r+1}(X_r/F)(r+1))
\]
of extensions of $F$ by the twist $\epsilon_XH_{\rm dR}^{2r+1}(X_r/F)(r+1)$
in the category of filtered $(\Phi,N)$-modules over $F$.
This group admits the following explicit description.

\begin{lem}\label{fil}
Set $H_r:=H_{\rm dR}^{2r+1}(X_r/F)$ and let $n=[F:\bQ_p]$. The assignment
\[
\{0\longrightarrow\epsilon_XH_r(r+1)\longrightarrow E\xrightarrow{\;\rho\;}F\longrightarrow 0\}
\quad\rightsquigarrow\quad\eta_E=\eta_E^{\rm hol}(1)-\eta_E^{\rm frob}(1),
\]
where $\eta_E^{\rm hol}:F\longrightarrow{\rm Fil}^0E$ (resp.
$\eta_E^{\rm frob}:F\longrightarrow E^{\Phi^n=1,N=0}$) is a section of
$\rho$ compatible with filtrations
(resp. with Frobenius and monodromy) yields an isomorphism
\begin{equation}\label{exp}
{\rm Ext}_{{\rm Mod}_F(\Phi,N)}(F,\epsilon_XH_r(r+1))
\simeq\epsilon_XH_r(r+1)/{\rm Fil}^0\epsilon_XH_r(r+1).\nonumber
\end{equation}
\end{lem}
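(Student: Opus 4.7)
My plan is to treat this as a standard Yoneda-style computation of $\mathrm{Ext}^1_{\mathrm{Mod}_F(\Phi,N)}(F,D)$ with $D:=\epsilon_XH_r(r+1)$. Given an extension $0\to D\to E\xrightarrow{\rho}F\to 0$ in the category of filtered $(\Phi,N)$-modules over $F$, I would produce two $F$-linear splittings of $\rho$ of different flavors and send $E$ to the class in $D/\mathrm{Fil}^0D$ of the difference of their values at $1\in F$.

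For $\eta_E^{\mathrm{hol}}$, the filtration on $F$ is concentrated in degree $0$, so picking any $F$-line inside $\mathrm{Fil}^0E$ complementary to $\mathrm{Fil}^0D=\mathrm{Fil}^0E\cap D$ produces a filtered section; this is elementary linear algebra in finite dimensions. For $\eta_E^{\mathrm{frob}}$, the extension $E$ is admissible as a filtered $(\Phi,N)$-module, since by Theorem~\ref{hk} it is the image under $\mathbf{D}_{\mathrm{st}}$ of a semistable extension of $\mathbf{Q}_p$ by $\epsilon_XV_r(r+1)$; the existence of an $F$-linear $(\Phi^n,N)$-equivariant lift of $1\in F$ then follows from Colmez--Fontaine, or equivalently from the fundamental exact sequences of $\mathbf{B}_{\mathrm{st}}$ applied to the corresponding Galois representation.

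The main check is that the difference $\eta_E:=\eta_E^{\mathrm{hol}}(1)-\eta_E^{\mathrm{frob}}(1)$, a priori an element of $D$, is well-defined modulo $\mathrm{Fil}^0D$. Two filtered sections differ by an element of $\mathrm{Fil}^0D$, and two $(\Phi^n,N)$-equivariant sections differ by an element of $D^{\Phi^n=1,N=0}$. The key input is that this latter space vanishes, which I would establish by a weight argument: by Tsuji's comparison and the Weil conjectures applied to the Kuga--Sato variety $W_r$ and the CM abelian variety $A^r$, together with the K\"unneth decomposition of $X_r=W_r\times A^r$, the eigenvalues of $\Phi^n$ on $\epsilon_XH_r$ are Weil numbers of weight $2r+1$, so after twisting by $r+1$ they acquire complex absolute value $p^{-1/2}$ and in particular none equals $1$. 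This purity step is the place where I expect the main subtlety, though it is by now standard.

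Granted these facts, the remaining verifications are formal. The map $E\mapsto\eta_E\pmod{\mathrm{Fil}^0D}$ is additive under Baer sums, is injective because $\eta_E\in\mathrm{Fil}^0D$ allows one to adjust $\eta_E^{\mathrm{hol}}$ within its coset so that it coincides with $\eta_E^{\mathrm{frob}}$ and hence produces a splitting in $\mathrm{Mod}_F(\Phi,N)$, and is surjective via the explicit inverse assigning to $\eta\in D$ the extension $D\oplus Fe$ equipped with $\Phi^n(e)=e$, $N(e)=0$, and $\mathrm{Fil}^j(D\oplus Fe)=\mathrm{Fil}^jD\oplus F(e-\eta)$ for $j\leq 0$ and $\mathrm{Fil}^jD$ otherwise, whose associated class is $\eta\pmod{\mathrm{Fil}^0D}$.
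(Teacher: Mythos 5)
Your proof is essentially the standard Yoneda-style computation for extensions of the unit filtered $(\Phi,N)$-module, which is exactly what the paper's cited reference \cite{IS} establishes, so your approach matches the paper's. A few small points worth tightening. First, the detour through admissibility and Colmez--Fontaine to produce $\eta_E^{\rm frob}$ is unnecessary and more delicate than the direct route: the weight argument you invoke for uniqueness already does double duty. Since the eigenvalues of the $F$-linear Frobenius $\Phi^n$ on $D:=\epsilon_XH_r(r+1)$ have complex absolute value $p^{-n/2}$ (not $p^{-1/2}$, as you wrote --- the eigenvalues of $\Phi^n$ on $H^{2r+1}$ are Weil $q$-numbers of weight $2r+1$ with $q=p^n$, and the Tate twist contributes $p^{-n(r+1)}$), the map $\Phi^n-1$ is bijective on $D$, which lets one correct any $F$-linear lift of $1$ to a $\Phi^n$-fixed one; and the identity $N\Phi^n=p^n\Phi^n N$ then forces the $N$-image of that lift to lie in the $p^{-n}$-eigenspace of $\Phi^n$, which is zero for the same reason. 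This is cleaner and entirely internal to the category ${\rm Mod}_F(\Phi,N)$. Second, in your explicit inverse you should declare $\Phi(e)=e$ (not merely $\Phi^n(e)=e$) so that $E$ actually carries a well-defined semilinear $\Phi$-action; and with ${\rm Fil}^j(D\oplus Fe)={\rm Fil}^jD\oplus F(e-\eta)$ for $j\le 0$, the class one recovers is $-\eta$, not $\eta$, so either replace $e-\eta$ by $e+\eta$ or note that $\eta\mapsto-\eta$ is an automorphism of $D/{\rm Fil}^0D$. None of these affect the validity of the argument.
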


\begin{proof}
See \cite[Lemma~2.1]{IS}, for example.
\end{proof}

Define the \emph{$p$-adic Abel--Jacobi map}
\begin{equation}\label{aj}
{\rm AJ}_F:{\rm CH}^{r+1}(X_r)_0(F)\arrow\epsilon_XH_{\rm dR}^{2r+1}(X_r/F)(r+1)/{\rm
Fil}^{0}\epsilon_XH_{\rm dR}^{2r+1}(X_r/F)(r+1)
\end{equation}
to be the composite of ${\rm AJ}_F^{\textrm{\'et}}$
with the isomorphisms of Theorem~\ref{hk} and Lemma~\ref{exp}.
Since the filtered pieces ${\rm Fil}^{1}\epsilon_XH^{2r+1}_{\rm dR}(X_r/F)(r)$
and ${\rm Fil}^0\epsilon_XH^{2r+1}_{\rm dR}(X_r/F)(r+1)$ are exact annihilators
under the Poincar\'e duality
\begin{equation}\label{PD-dR}
\epsilon_XH^{2r+1}_{\rm dR}(X_r/F)(r)\times\epsilon_XH^{2r+1}_{\rm dR}(X_r/F)(r+1)\arrow F,\nonumber
\end{equation}
the target of ${\rm AJ}_F$ may be identified
with the linear dual $({\rm Fil}^{r+1}\epsilon_XH^{2r+1}_{\rm dR}(X_r/F))^\vee$.
\sk

Recall the coherent sheaf of $\cO_X$-modules $\ccL_r={\rm Sym}^r\ccL$ on $X$
introduced in Section~\ref{subsubsec:mf&dR}, and set
\[
\ccL_{r,r}:=\ccL_r\otimes{\rm Sym}^rH_{\rm dR}^1(A).
\]
With the trivial extension of the Gauss--Manin connection $\nabla$ on $\ccL_r$
to $\ccL_{r,r}$, consider the complex
\[
\ccL_{r,r}^\bullet:\ccL_{r,r}
\xrightarrow{\;\nabla\;}\ccL_{r,r}\otimes\Omega_X^1(\log Z_X),
\]
and define $H_{\rm par}^1(X,\ccL_{r,r},\nabla)$ as in $(\ref{def:Hpar})$.
By \cite[Prop.~2.4]{bdp1}, we then have
\begin{equation}
\epsilon_{X}H^{2r+1}_{\rm dR}(X_r/F)\simeq H_{\rm par}^1(X,\ccL_{r,r},\nabla)
=H^1_{\rm par}(X,\ccL_r,\nabla)\otimes{\rm Sym}^rH^1_{\rm dR}(A/F)\nonumber
\end{equation}
and
\begin{equation}\label{2.4}
{\rm Fil}^{r+1}\epsilon_XH^{2r+1}_{\rm dR}(X_r/F)
\simeq H^0(X,\underline{\omega}^{\otimes r}\otimes\Omega_{X}^1)\otimes{\rm Sym}^rH_{\rm dR}^1(A/F).\nonumber
\end{equation}

As a result of these identifications, we shall view the $p$-adic Abel--Jacobi map $(\ref{aj})$
as a map
\begin{equation}\label{p-AJ}
{\rm AJ}_F:{\rm CH}^{r+1}(X_r)_0(F)
\arrow(H^0(X,\underline{\omega}^{\otimes r}\otimes\Omega_{X}^1)\otimes{\rm Sym}^rH_{\rm dR}^1(A/F))^\vee.
\end{equation}
Moreover, if $\Delta=\epsilon_X\Delta\in{\rm CH}^{r+1}(X_r)_0(F)$
is the class of a cycle in the image of the idempotent $\epsilon_X$
supported on the fiber of $X_r\longrightarrow X$ over a point $P\in X(F)$,
we see that ${\rm AJ}_F(\Delta)$ may be computed using the following recipe.
Consider the commutative diagram with Cartesian squares:
\begin{displaymath}
%\xymatrix{
%H_{\rm par}^1(X,\ccL_{r,r},\nabla)(r+1) \ar@{^{(}->}[r]\ar@{=}[d]& D_\Delta
%\ar@{>>}[r]\ar[d] & F \ar[d]^{{\rm cl}_\Delta}\\
%H_{\rm par}^1(X,\ccL_{r,r},\nabla)(r+1) \ar@{^{(}->}[r]&
%H^1_{\rm par}(X\smallsetminus P,\ccL_{r,r},\nabla)(r+1) \ar@{>>}[r]& \ccL_{r,r}(P)(r)
%}
\xymatrix{
0\ar[r]&H_{\rm par}^1(X,\ccL_{r,r},\nabla)(r+1) \ar[r]\ar@{=}[d]& D_\Delta
\ar[r]\ar[d] & F \ar[d]^{{\rm cl}_\Delta}\ar[r]&0\\
0\ar[r]&H_{\rm par}^1(X,\ccL_{r,r},\nabla)(r+1) \ar[r]&
H^1_{\rm par}(X\smallsetminus P,\ccL_{r,r},\nabla)(r+1) \ar[r]& \ccL_{r,r}(P)(r)\ar[r]&0,
}
\end{displaymath}
where the rightmost vertical map is defined by
sending $1\in F$ to the cycle class ${\rm cl}_P(\Delta)$. %\in\ccL_{r,r}(P)(r)$.
%Tracing through the definitions, it is easy to see that
Then ${\rm AJ}_F(\Delta)$ is given by the linear functional
\begin{equation}\label{computeAJ}
{\rm AJ}_F(\Delta)=\langle-,\eta_\Delta\rangle,\nonumber
\end{equation}
where $\eta_\Delta=\eta_\Delta^{\rm hol}-\eta_\Delta^{\rm frob}:=\mathbf{\eta}_{D_\Delta}^{\rm hol}(1)-\mathbf{\eta}_{D_\Delta}^{\rm frob}(1)$
is the ``tangent vector'' associated as in Lemma~\ref{fil} to
the extension $D_\Delta$ as filtered $(\Phi,N)$-modules, and
\begin{equation}\label{PDparr}
\langle,\rangle:H^1_{\rm par}(X,\ccL_{r,r},\nabla)(r)\times H^1_{\rm par}(X,\ccL_{r,r},\nabla)(r+1)\longrightarrow F
\end{equation}
is the Poincar\'e duality.
%To compute $(\ref{computeAJ})$ explicitly, it will be convenient to have a description
%of the above structures in terms of rigid analysis.

\subsection{Rigid cohomology}\label{sec:p-adic-dR}

%Keep the notation of the preceding sections, and
Recall the rigid spaces
$\cZ_\infty\subset\cW_\infty$, $\cZ_0\subset\cW_0$
introduced in Section~\ref{subsubsec:X}.
Fix a collection of points $\{P_1,\dots,P_t\}$ of $X(F)$ contained in $\cZ_\infty$,
containing all the cusps of $\cZ_\infty$, and such that ${\rm red}_p(P_i)\neq{\rm red}_p(P_j)$ for $i\ne j$.
Let $w_p$ be the automorphism of $X$ defined in terms of moduli by
\begin{equation}\label{wp}
w_p(E,t,\alpha)=(\alpha(E),\alpha(t),\alpha^\vee),
\end{equation}
where $\alpha^\vee$ is the isogeny dual to $\alpha$,
and set $P_j^*:=w_pP_j$. Then the points $P_j^*$ factor through $\cZ_0$,
and the set
\[
S:=\{P_1,\dots,P_t,P_1^*,\dots,P_t^*\}
\]
contains all the cusps of $X$.
Since the points $Q\in S$ reduce to smooth points
$\bar{Q}$ in the special fiber, the spaces
$D(Q):={\rm red}_p^{-1}(\bar{Q})$ are conformal to
the open disc in $D(0;1)$ in $\bC_p$.
Fix isomorphisms $h_Q:D(Q)\longrightarrow D(0;1)$
mapping the point $Q$ to $0$, and for a
collection of real numbers $r_Q<1$ consider the annuli
\begin{equation}\label{eq:Vj}
\cV_Q:=\{x\in D(Q)\;\colon\;r_Q<|h_Q(x)|_p<1\}.
\end{equation}

Denote by $\ccL_{r,r}^{\rm rig}$ the sheaf for the rigid analytic topology on $X(\bC_p)$
defined by the algebraic vector bundle $\ccL_{r,r}$.
If $\cV\subset X(\bC_p)$ is a connected wide open contained in $Y(\bC_p)$,
the Gauss--Manin connection yields a connection
\[
\nabla:\ccL_{r,r}^{\rm rig}\vert_\cV\arrow\ccL_{r,r}^{\rm rig}\vert_{\cV}\otimes\Omega_\cV^1,
\]
and similarly as in (\ref{def:HdR})
we define the \emph{$i$-th de Rham cohomology} of $\cV$ attached
to $\ccL^{\rm rig}_{r,r}$ by
\[
H^i(\ccL_{r,r}^{\bullet}\vert_\cV)=H_{\rm dR}^i(\cV,\ccL^{\rm rig}_{r,r},\nabla)
:=\mathbb{H}^i(\ccL_{r,r}^{\rm rig}\vert_\cV\xrightarrow{\;\nabla\;}\ccL_{r,r}^{\rm
rig}\vert_{\cV}\otimes\Omega_\cV^1).
\]
In particular, if $\cV$ is a basic wide open, then
\[
H^1(\ccL_{r,r}^\bullet\vert_\cV)\simeq\frac{\ccL_{r,r}^{\rm rig}(\cV)\otimes\Omega_\cV^1}
{\nabla\ccL_{r,r}^{\rm rig}(\cV)},
\]
and $H^0(\ccL_{r,r}^\bullet\vert_{\cV})\simeq\ccL_{r,r}^{\rm rig}(\cV)^{\nabla=0}$
is the space of horizontal sections of $\ccL_{r,r}^{\rm
rig}$ over $\cV$. For $r=0$, we set
\[
H^1(\ccL_{r,r}^\bullet\vert_{\cV})=H^1(\cV):=\Omega_{\cV}^1/d\cO_{\cV},\quad\quad
H^0(\ccL_{r,r}^\bullet\vert_{\cV})=H^0(\cV):=\cO_\cV^{d=0},
\]
where $d:\cO_\cV\longrightarrow\Omega^1_\cV$ is the differentiation map.
\sk

In terms of the admissible cover of $X(\bC_p)$ % $X=\cW_0\cup\cW_\infty$
by basic wide opens described in Section~\ref{subsubsec:X},
the classes in $H^1_{\rm dR}(X(\bC_p),\ccL_{r,r}^{\rm rig},\nabla)$
may be represented by hypercocycles $(\omega_0,\omega_\infty;f_{\cW})$,
where $\omega_0$ and $\omega_\infty$ are $\ccL_{r,r}^{\rm rig}$-valued differentials
on $\cW_0$ and $\cW_\infty$, respectively,
and $f_{\cW}\in\ccL_{r,r}^{\rm rig}(\cW)$ is such that
$(\omega_\infty-\omega_0)\vert_{\cW}=\nabla f_{\cW}$; and two
hypercocycles represent the same class if
their difference is of the form $(\nabla f_0,\nabla
f_\infty;(f_\infty-f_0)\vert_\cW)$ for some $f_0\in\ccL_{r,r}^{\rm rig}(\cW_0)$
and $f_\infty\in\ccL_{r,r}^{\rm rig}(\cW_\infty)$.
\sk

If $\cV$ is a wide open annulus,
associated with an orientation of $\cV$ there is a \emph{$p$-adic
annular residue}
\begin{equation}\label{res0}
{\rm res}_\cV:\Omega_\cV^1\arrow\bC_p
\end{equation}
defined by expanding $\omega=\sum_na_nT^n\frac{dT}{T}\in\Omega_\cV^1$
with respect to a fixed uniformizing parameter $T$ of compatible with the orientation,
and setting ${\rm res}_\cV(\omega):=a_{0}$ (see \cite[Lemma~2.1]{rlc}).
%This descends to a map ${\rm res}_\cV:H^1(\cV)\longrightarrow\bC_p$,
%and using
Combined with the natural pairing
\[
\langle,\rangle:\ccL_{r,r}^{\rm rig}(\cV)\times\ccL_{r,r}^{\rm
rig}(\cV)\otimes\Omega_\cV^1\arrow\Omega_\cV^1
\]
induced by the Poincar\'e duality $(\ref{PDL})$ on $\ccL_{r}$
(extended to $\ccL_{r,r}$ %=\ccL_r\otimes{\rm Sym}^rH_{\rm dR}^1(A/F)$
in the obvious manner),
we obtain a higher $p$-adic annular residue map
\begin{equation}\label{bdp-residue}
{\rm Res}_\cV:\ccL_{r,r}^{\rm
rig}(\cV)\otimes\Omega_\cV^1\arrow\ccL_{r,r}^{\rm
rig}(\cV)^\vee
\end{equation}
by setting
\[
{\rm Res}_{\cV}(\omega)(\alpha)
={\rm res}_{\cV}\langle\alpha,\omega\rangle
\]
for every  $\ccL^{\rm rig}_{r,r}$-valued differential $\omega$ on $\cV$
and every section $\alpha\in\ccL^{\rm rig}_{r,r}(\cV)$.
Since ${\rm res}_\cV$ clearly descends to a map $H^1(\cV)=\Omega_{\cV}^1/d\cO_{\cV}\longrightarrow\bC_p$,
by composing ${\rm Res}_\cV$ with the projection $\ccL_{r,r}^{\rm rig}(\cV)^\vee\longrightarrow
H^0(\ccL_{r,r}^\bullet\vert_{\cV})^\vee$ %(i.e. restricting to $\alpha$ as above which are \emph{horizontal})
it is easily seen from the Leibniz rule that we obtain a well-defined map
\begin{equation}\label{higherRes}
{\rm Res}_\cV:H^1(\ccL_{r,r}^\bullet\vert_{\cV})\arrow
H^0(\ccL_{r,r}^\bullet\vert_{\cV})^\vee.
\end{equation}
%Note that in the case where $r=0$, this last residue map
%is nothing but the original residue map, the latter being sent to
%the former through the canonical identification
%$\bC_p\xrightarrow{\sim}\bC_p^\vee$ induced by the multiplication
%map (cf. Lemma \ref{compatibility} below).

If $\cV_Q\subset D(Q)$ is the annulus attached to
a non-cuspidal point $Q\in S$, it will be convenient,
following the discussion after \cite[Cor.~3.7]{bdp1},
to view ${\rm Res}_{\cV_Q}$ as taking values on the fiber $\ccL_{r,r}(Q)$, using
the sequence of identifications
\begin{equation}\label{eq:ident}
H^0(\ccL_{r,r}^\bullet\vert_{\cV_Q})^\vee
=(H^0(D(Q),\ccL_{r,r})^{\nabla=0})^\vee =\ccL_{r,r}(Q)^\vee=\ccL_{r,r}(Q)
\end{equation}
arising from ``analytic continuation'', the choice of
an ``initial condition'', and the self-duality of $\ccL_{r,r}(Q)$, respectively.
(See \emph{loc.cit.} for the case of a cusp $Q\in S$.)
\sk

For a supersingular annulus $\A_x$, the vector space
$H^0(\ccL_{r,r}^\bullet\vert_{\A_x})$ is equipped
with a pairing $\langle,\rangle_{\A_x}$, arising from
an identification (similar to $(\ref{eq:ident})$)
with the de Rham cohomology of a supersingular elliptic curve
in characteristic $p$ corresponding to $x\in SS$.
Moreover, since $H^0(\A_x)\simeq\bC_p$,
the residue map (\ref{res0}) yields an isomorphism
${\rm res}_{\A_x}:H^1(\A_x)\longrightarrow H^0(\A_x)$,
and using a trivialization of $\ccL_{r,r}^{\rm rig}\vert_{\A_x}$
it may be extended to an isomorphism
\begin{equation}\label{Resr}
{\rm Res}_{\A_x}:H^1(\ccL_{r,r}^\bullet\vert_{\A_x})
\simeq H^0(\ccL_{r,r}^\bullet\vert_{\A_x})
\end{equation}
(see \cite[Prop.~7.1]{pSI}). It is then easily checked that $(\ref{higherRes})$
and $(\ref{Resr})$ correspond to each other under the identification
$H^0(\ccL_{r,r}^\bullet\vert_{\A_x})^\vee=H^0(\ccL_{r,r}^\bullet\vert_{\A_x})$
defined by $\langle,\rangle_{\A_x}$.
\sk

Let $S$ be a set of points as introduced above, %at the beginning of $\S\ref{sec:p-adic-dR}$,
and define
\begin{align*}
\cW_\infty^\sharp&:=\cZ_\infty\smallsetminus
\bigcup_{Q\in S\cap\cZ_\infty}D(Q)\smallsetminus\cV_Q,\quad\quad
%\cW_\infty^\sharp:=\cZ_\infty^\sharp\cup\bigcup_{Q\in S\cap\cZ_\infty}\cV_Q,%\quad\quad
\cU:={\cW}_\infty^\sharp\cup{\cW}_0^\sharp,
\end{align*}
%$\cU:={\cW}_\infty^\sharp\cup{\cW}_0^\sharp$,
where $\cW_0^\sharp:=w_p\cW_\infty^\sharp$, and $U:=X\smallsetminus S$.
The restriction of an $\ccL_{r,r}$-valued differential on $X$ which is regular on $U$
defines a section of $\ccL_{r,r}^{\rm rig}\otimes\Omega_X^1$ over
$\cU$. As argued in the proof of \cite[Prop.~7.2]{pSI}, this yields an isomorphism
\begin{equation}\label{alg=anal}
H_{\rm dR}^1(U,\ccL_{r,r},\nabla)\simeq H^1(\ccL_{r,r}^\bullet\vert_{\cU})\nonumber
\end{equation}
between algebraic and rigid de Rham cohomology.

%Let $\bar{Z}_X$
%be the image the cuspidal subscheme $Z_X$ under reduction $(\ref{eq:red})$,
%and set $\cC={\rm red}_p^{-1}(\bar{Z}_X)$.
%The \emph{parabolic cohomology of $\cU$} attached to $\ccL_{r,r}^{\rm rig}$
%is defined as
%\begin{align}\label{par}
%H_{\rm par}^1(\ccL_{r,r}^\bullet\vert_{\cU})
%&:={\rm ker}\left(H^1_{\rm dR}(\cU,\ccL_{r,r}^{\rm rig},\nabla)\xrightarrow{{\rm
%Res}^\cU_{\cU\cap\cC}} H^1_{\rm
%dR}(\cU\cap\cC,\ccL_{r,r}^{\rm rig},\nabla)\right).
%\end{align}

\begin{prop}\label{prop:poincare}
Let the notations be as before.
\begin{enumerate}
\item{} A class $\kappa\in H^1(\ccL_{r,r}^\bullet\vert_{\cU})$
belongs to the image of $H_{\rm par}^1(X,\ccL_{r,r},\nabla)$
under restriction
\[
H_{\rm par}^1(X,\ccL_{r,r},\nabla)\arrow H_{\rm dR}^1(U,\ccL_{r,r},\nabla)\simeq H^1(\ccL_{r,r}^\bullet\vert_{\cU})
\]
if and only if ${\rm Res}_{\cV_Q}(\kappa)=0$ for all $Q\in S$.
\item{}
Let $V$ be such that $\{U,V\}$ is an admissible covering of $X$.
%Under the above identification,
If $\kappa_\omega$, $\kappa_\eta\in H_{\rm par}^1(X,\ccL_{r,r},\nabla)$ are represented by the hypercocycles
$(\omega_U,\omega_V;\omega_{U\cap V})$, $(\eta_U,\eta_V;\eta_{U\cap V})$ respectively,
with respect to this covering, then the value
$\langle\kappa_\omega,\kappa_\eta\rangle$ under the Poincar\'e duality $(\ref{PDparr})$
is given by
\begin{equation}\label{eq:poincare}
\langle\kappa_\omega,\kappa_\eta\rangle=\sum_{Q\in S}{\rm
res}_{\cV_Q}\langle F_{\omega,Q},\eta_U\rangle,\nonumber
\end{equation}
where $F_{\omega,Q}$ is any local primitive of $\omega_U$ on $\cV_Q$,
i.e., such that $\nabla F_{\omega,Q}=\omega_U\vert_{\cV_Q}$.
\end{enumerate}
\end{prop}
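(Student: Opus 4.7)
The plan is to deduce both assertions via a Mayer--Vietoris analysis of an admissible covering of $X(\bC_p)$ adapted to the wide open $\cU$. Since $\ccL_{r,r}=\ccL_r\otimes{\rm Sym}^rH^1_{\rm dR}(A/F)$ with the second tensor factor a constant bundle on $X$, the analysis reduces essentially to the case of $\ccL_r$ treated in \cite[\S 7]{pSI}, and throughout I would use the algebraic-to-rigid comparison isomorphism $H^1_{\rm dR}(U,\ccL_{r,r},\nabla)\simeq H^1(\ccL_{r,r}^\bullet\vert_{\cU})$ recalled just before the proposition.

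For part (1), I would complete $\cU$ to the admissible cover $\{\cU,\bigsqcup_{Q\in S}D(Q)\}$ of $X(\bC_p)$; its pairwise intersection is the disjoint union of the annuli $\cV_Q$. The associated Mayer--Vietoris long exact sequence then produces a connecting homomorphism whose $Q$-component is the annular residue ${\rm Res}_{\cV_Q}$, since classes on the discs $D(Q)$ are controlled by their residues at the boundary annuli. A class $\kappa\in H^1(\ccL_{r,r}^\bullet\vert_{\cU})$ thus lies in the image of restriction from $H^1_{\rm dR}(X,\ccL_{r,r},\nabla)$ precisely when ${\rm Res}_{\cV_Q}(\kappa)=0$ for every $Q\in S$. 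The \emph{parabolic} condition at the cusps $Q\in Z_X$ is equivalent to the same residue vanishing, because the local model of $\ccL_r$-valued differentials near a cusp identifies the cuspidal condition, i.e. factorization through $\mathbb{H}^1(\ccL_r^\bullet\otimes\cI)$ as in $(\ref{def:Hpar})$, with triviality of the residue at the associated annulus. Combining the two observations yields the ``if and only if'' assertion.

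For part (2), given hypercocycle representatives of $\kappa_\omega$ and $\kappa_\eta$ with respect to the admissible cover $\{U,V\}$, the Poincar\'e pairing $(\ref{PDparr})$ admits a \v{C}ech-theoretic expression as a sum of local contributions over the connected components $\cV_Q$ of $U\cap V$. Applying part (1) to $\kappa_\omega$ yields ${\rm Res}_{\cV_Q}(\omega_U\vert_{\cV_Q})=0$, so a local primitive $F_{\omega,Q}$ with $\nabla F_{\omega,Q}=\omega_U\vert_{\cV_Q}$ exists on each $\cV_Q$. Substituting into the \v{C}ech cup-product formula and using the Leibniz rule for $\nabla$ together with the self-duality of $\ccL_{r,r}$, the $Q$-contribution collapses to ${\rm res}_{\cV_Q}\langle F_{\omega,Q},\eta_U\rangle$, and summing over $Q\in S$ gives the stated identity.

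The main obstacle is the precise matching of signs and conventions when identifying the algebraically defined Poincar\'e duality $(\ref{PDparr})$ with the sum-of-residues expression produced by the \v{C}ech calculation, together with the verification that the parabolic condition at the cusps is indeed detected by residue vanishing at the corresponding annuli. These are twisted generalizations of classical Serre-duality residue computations on a curve, and follow the pattern of the arguments in \cite[\S 7]{pSI}; the additional symmetric-power coefficients ${\rm Sym}^r H^1_{\rm dR}(A/F)$ contribute trivially since they form a constant bundle, so the relevant residue and pairing maps extend from the $\ccL_r$-case by functoriality.
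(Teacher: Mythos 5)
Your proposal is correct and follows essentially the same route as the paper, which simply cites \cite[Prop.~3.8]{bdp1} for part (1) and \cite[Lemma~7.1]{pSI} for part (2); the Mayer--Vietoris analysis of a covering with annular intersections, the detection of the parabolic condition via residue vanishing near the cusps, and the \v{C}ech cup-product/residue-sum computation are precisely the arguments given there, and your reduction to $\ccL_r$ via triviality of the constant factor ${\rm Sym}^rH^1_{\rm dR}(A/F)$ is likewise the reduction the paper adopts implicitly.

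One small point worth keeping in mind when fleshing this out: in part (2) the existence of a local primitive $F_{\omega,Q}$ on the annulus $\cV_Q$ rests on the fact that ${\rm Res}_{\cV_Q}:H^1(\ccL_{r,r}^\bullet\vert_{\cV_Q})\to H^0(\ccL_{r,r}^\bullet\vert_{\cV_Q})^\vee$ is an isomorphism (so ${\rm Res}_{\cV_Q}(\kappa_\omega)=0$, which you correctly obtain from part (1), does indeed force exactness of $\omega_U\vert_{\cV_Q}$); this is the annulus analogue of $(\ref{Resr})$ and should be invoked explicitly rather than left tacit.
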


\begin{proof}
The first assertion follows from the same argument as in \cite[Prop.~3.8]{bdp1},
and the second is \cite[Lemma~7.1]{pSI}.
%For the second, let $\kappa$ be a class in
%$H_{\rm par}^1(U,\ccL_{r,r},\nabla)$ represented
%by the hypercocycle $(\omega_0,\omega_\infty;f_\cW)$ with respect to
%the covering $\{\cW_0\smallsetminus S,\cW_\infty\smallsetminus S\}$
%of $U$. Enlarging the finite set $S$ if necessary,
%there exist sections $t_i\in\ccL_{r,r}(\cW_i\smallsetminus S)$ for $i=0,\infty$
%such that $f_\cW=t_\infty\vert_{\cW_\infty}-t_0\vert_{\cW_0}$.
%By subtracting the hypercoboundary
%$(\nabla t_0,\nabla t_\infty;(t_\infty-t_0)\vert_{\cW})$,
%we thus see that $\kappa$ may be represented by a hypercocycle of the form
%$(\omega_0',\omega_\infty';0)$ with respect to the above covering of $U$,
%and hence by an $\ccL_{r,r}$-valued differential on $U$.
%The result now follows from \cite[Lemma~7.1]{pSI}.
%
%(Notice that by the
%preceding characterization of the image of
%$H_{\rm par}^1(X,\ccL_{r,r},\nabla)$
%in $H_{\rm dR}^1(\cU,\ccL_{r,r}^{\rm rig},\nabla)$ each of the
%right summands in $(\ref{eq:poincare})$ is independent of
%the particular choice of local primitive $F_{\omega,Q}$).
\end{proof}

\subsection{Coleman's $p$-adic integration}\label{sec:fmandCol}

In this section, we give an explicit description of the
filtered $(\Phi,N)$-module structure on $H_{\rm par}^1(X,\ccL_{r,r},\nabla)$,
following the work of Coleman--Iovita \cite{CI2}.
We state the results for $\ccL_r$,
leaving their trivial extension to
$\ccL_{r,r}=\ccL_r\otimes{\rm Sym}^rH_{\rm dR}^1(A)$
to the reader.
\sk

%Let ${\rm log}:\bC_p^\times\rightarrow\bC_p$
%be a fixed branch of the $p$-adic logarithm. %\footnote{Later we will take Iwasawa's branch, so that $\log(p)=0$.}.
As recalled in Section~\ref{sec:p-adic}, for every pair $(E_x,t_x)$
corresponding to a point $x\in X_1(N)(\frac{p}{p+1})$
there is a canonical $p$-isogeny $\alpha_{\rm can}:E_x\mapsto E_x/{\rm can}(E_x)$,
where ${\rm can}(E_x)\subset E_x[p]$ is the canonical subgroup.
The %\emph{Deligne--Tate}
map $V:X_1(N)(\frac{1}{p+1})\longrightarrow X_1(N)(\frac{p}{p+1})$
defined in terms of moduli by
\begin{equation}\label{V}
V(E_x,t_x)=(\alpha_{\rm can}(E_x),\alpha_{\rm can}(t_x))
\end{equation}
is then a lift of the absolute Frobenius on $X_1(N)_{\mathbf{F}_p}$.
%Let $\pi_1:X\longrightarrow X_1(N)$ be the forgetful projection,
Letting $s_1:X_1(N)(\frac{p}{p+1})\longrightarrow\cW_\infty$
be defined by $(E_x,t_x)\mapsto(E_x,t_x,{\rm can}(E_x))$, and letting
$\cW_\infty'\subset\cW_\infty$ be the image of $X_1(N)(\frac{1}{p+1})$ under $s_1$,
the map $\phi_\infty$ defined by the commutativity of the diagram
\begin{displaymath}
\xymatrix{
\cW_\infty'%\ar@{-->}[r]^
\ar[r]^{\phi_\infty}\ar[d]^{\pi_1}&\cW_\infty\\
X_1(N)(\frac{1}{p+1})\ar[r]^V&X_1(N)(\frac{p}{p+1}),\ar[u]^{s_1}
}
\end{displaymath}
is therefore a lift of the absolute Frobenius on $X_{\mathbf{F}_p}$.
%Restricted to the affinoid $\cZ_\infty\subset\cW_\infty'$,
%the morphism $\phi_\infty$ is a \emph{Frobenius endomorphism}
%of $\cZ_\infty$ in the sense of \cite[\S{1}]{TPC},
\sk

As explained in \cite[p.41]{pSI} (see also the more detailed discussion in \cite[p.218]{coc}),
the canonical subgroup yields a horizontal morphism
${\rm Fr}_\infty:\phi_\infty^*\ccL_r\longrightarrow\ccL_r\vert_{\cW_\infty'}$. Define
the \emph{Frobenius endomorphism} ${\Phi}_\infty$
on $H^1(\ccL_r^\bullet\vert_{\cW_\infty})$ by the composite map
\[
H^1(\ccL_r^\bullet\vert_{\cW_\infty})\simeq
\frac{\ccL_r^{\rm rig}\otimes\Omega^1_{\cW_\infty}(\cW_\infty)}{\nabla\ccL_r^{\rm rig}(\cW_\infty)}
\xrightarrow{({\rm Fr}_\infty\otimes{\rm id})\phi_\infty^*}
\frac{\ccL_r^{\rm rig}\otimes\Omega^1_{\cW_\infty}(\cW_\infty')}{\nabla\ccL_r^{\rm rig}(\cW_\infty')}
\simeq
H^1(\ccL_r^\bullet\vert_{\cW_\infty}),
\]
where the last isomorphism is given by restriction (see \cite[Prop.~10.3]{pSI}).
Setting $\cW_0':=w_p\cW_\infty'\subset\cW_0=w_p\cW_\infty$
and $\phi_0:=w_p^{-1}\phi_\infty w_p$,
where $w_p$ is the automorphism of $X$ given by $(\ref{wp})$,
we similarly define a Frobenius endomorphism ${\Phi}_0$ of $H^1(\ccL_r^\bullet\vert_{\cW_0})$.
%In order to obtain $F$-linear maps (rather than $\sigma$-semilinear) set $\Phi_\infty:=\tilde{\Phi}_\infty^n$ and
%$\Phi_0:=\tilde{\Phi}_0^n$, where $n=[F:\bQ_p]$.

\begin{thm}[Coleman]\label{coleman-primitives}
Let $f=q+\sum_{n=2}^\infty a_n(f)q^n\in S_{r+2}(\Gamma_0(Np))$ %\subset S_{r+2}(X)^{p-{\rm new}}$
be a $p$-new
%cuspidal eigenform of even weight $r+2\geq 2$ and nebentypus character $\varepsilon_f$ satisfying $\varepsilon_f(p)=1$,
eigenform of weight $r+2\geq 2$, and let $\omega_f\in H^0(X,\underline{\omega}^r\otimes\Omega_X^1)\subset H_{\rm
par}^1(X,\ccL_r,\nabla)$ be the associated differential.
%Let $d$ be the order of $\varepsilon_f(p)$ and set
%\begin{equation}\label{ColPol}
%L(T)=T^{2d}-p^{kd}.
%\end{equation}
Then for each $\star\in\{\infty,0\}$ there exists a
%Frobenius neighborhood
%$(\cW_\star',\phi_\star)$ of $\cZ_\star$ in $\cW_\star$, and a
locally analytic section $F_{f,\star}$ of $\ccL_r$ on $\cW_\star$ such that
\begin{itemize}
\item[$(i)$]{} $\nabla F_{f,\star}=\omega_f\vert_{\cW_\star}$; and
\item[$(ii)$]{} $F_{f,\star}-\frac{a_p(f)}{p^{r+1}}\phi_\star^*F_{f,\star}$
is rigid analytic on $\cW_\star'$.
\end{itemize}
%where $a_p(f)$ is the $U_p$-eigenvalue of $f$.
Moreover, $F_{f,\star}$ is unique modulo $H^0(\ccL_r^\bullet\vert_{\cW_\star})$.
\end{thm}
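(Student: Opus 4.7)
The plan is to separate the existence into: (a) producing a locally analytic primitive of $\omega_f$ on the entire basic wide open $\cW_\star$, then (b) upgrading to the Frobenius-compatibility property on the smaller $\cW_\star'$ using the $p$-new eigenform hypothesis. The uniqueness assertion comes for free: if $F_{f,\star}$ and $F'_{f,\star}$ both satisfy (i), then $F_{f,\star}-F'_{f,\star}$ lies in $\ccL_r^{\rm rig}(\cW_\star)^{\nabla=0}=H^0(\ccL_r^\bullet\vert_{\cW_\star})$.

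For step (a), I would invoke Coleman's theory of $p$-adic integration for iterated sections of the connection $(\ccL_r,\nabla)$ on a basic wide open; since $\cW_\star$ admits an admissible covering by open discs and annuli, $\omega_f$ admits rigid primitives on a sufficiently fine cover, and one patches them to obtain a locally analytic section $F_{f,\star}$ on $\cW_\star$ with $\nabla F_{f,\star}=\omega_f\vert_{\cW_\star}$. The obstruction to $F_{f,\star}$ being globally rigid is the class of $\omega_f$ in $H^1(\ccL_r^\bullet\vert_{\cW_\star})$, which we do not control; we therefore allow $F_{f,\star}$ to be only locally analytic.

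For step (b), working on $\cW_\star'\subset\cW_\star$, I would compute
\[
\nabla\bigl(F_{f,\star}-\tfrac{a_p(f)}{p^{r+1}}\phi_\star^* F_{f,\star}\bigr)
=\omega_f-\tfrac{a_p(f)}{p^{r+1}}\phi_\star^*\omega_f
\]
on $\cW_\star'$, and show that the right-hand side is exact as an $\ccL_r^{\rm rig}$-valued differential, i.e., vanishes in $H^1(\ccL_r^\bullet\vert_{\cW_\star'})$. Unwinding the definition of $\Phi_\star$ recalled above, this amounts to showing that the operator $1-\tfrac{a_p(f)}{p^{r+1}}\Phi_\star$ kills the class $[\omega_f]$ in rigid cohomology. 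This is where the hypothesis that $f$ is a $p$-new $U_p$-eigenform with eigenvalue $a_p(f)$ enters decisively: via the dictionary relating $\phi_\star$ (which on moduli corresponds to quotienting by the canonical subgroup) to the $U_p$-operator, together with the Atkin--Lehner relation for $p$-new forms, one obtains $\Phi_\star[\omega_f]=\tfrac{p^{r+1}}{a_p(f)}[\omega_f]$, so that $(1-\tfrac{a_p(f)}{p^{r+1}}\Phi_\star)[\omega_f]=0$. Given this vanishing, there exists a rigid section $G$ on $\cW_\star'$ with $\nabla G=\omega_f-\tfrac{a_p(f)}{p^{r+1}}\phi_\star^*\omega_f$; the difference $F_{f,\star}-\tfrac{a_p(f)}{p^{r+1}}\phi_\star^*F_{f,\star}-G$ is then horizontal on $\cW_\star'$, hence rigid, which forces the first summand to be rigid as well.

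The main obstacle is the cohomological computation in step (b), i.e., pinning down how the Frobenius lift $\phi_\star^*$ acts on the class of $\omega_f$ and translating the $U_p$-eigenform property into an identity in $H^1(\ccL_r^\bullet\vert_{\cW_\star'})$. The key inputs here are the compatibility between the rigid-analytic $\Phi_\star$ and the algebraic $U_p$-operator on parabolic cohomology, together with the fact that for a $p$-new form the operator $U_p$ acts on $\omega_f$ with a single eigenvalue (there is no $V_p$-contribution, precisely the role of the $p$-newness hypothesis via Proposition on the injectivity of $\pi_1^*\oplus\pi_2^*$).
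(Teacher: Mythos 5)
Your key step (b)---showing that the polynomial $P(T)=1-\tfrac{a_p(f)}{p^{r+1}}T$ annihilates $[\omega_f]$ under the rigid-analytic Frobenius $\Phi_\star$---is exactly the paper's computation: the paper invokes \cite[Lemma~11.1]{pSI} to get $\Phi_\infty=pU_p$ on the $p$-new subspace, and then uses $U_p\omega_f=a_p(f)\omega_f$ together with $U_p^2=p^r\langle p\rangle$ (so $a_p(f)^2=p^r$) to deduce $P(\Phi_\infty)[\omega_f]=0$. Your eigenvalue $\tfrac{p^{r+1}}{a_p(f)}=pa_p(f)$ agrees with this, so that part of the argument is sound and it is essentially the paper's route.

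There is, however, a genuine gap in the rest of your argument. Both your uniqueness claim and the final sentence of your step (b) rely on the assertion that a locally analytic section $G$ of $\ccL_r$ with $\nabla G=0$ automatically lies in $\ccL_r^{\rm rig}(\cW_\star)^{\nabla=0}$. That is false: ``locally analytic'' (analytic on each residue disc) permits an independent horizontal section on each disc, so $\nabla G=0$ only forces $G$ to be \emph{locally} horizontal, not globally rigid. In particular, condition (i) alone does \emph{not} determine $F_{f,\star}$ up to an element of $H^0(\ccL_r^\bullet\vert_{\cW_\star})$; it is precisely the Frobenius condition (ii) that pins down the constants of integration disc by disc. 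The same issue invalidates the last step of (b), where you write ``the difference is then horizontal on $\cW_\star'$, hence rigid.'' What you are implicitly re-proving is Coleman's theorem on $F$-crystals \cite[Thm.~10.1]{pSI}, whose content is exactly that, \emph{given} the vanishing $P(\Phi_\star)[\omega_f]=0$, there exists a locally analytic primitive satisfying both (i) and (ii), and that it is unique up to global horizontal sections. The paper's proof reduces to the eigenvalue computation and then cites this theorem; your write-up should do the same rather than try to re-derive the rigidity from scratch.
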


\begin{proof}
This follows from the discussion in \cite[\S{11}]{pSI}.
By [\emph{loc.cit}, Lemma~11.1] we have $\Phi_\infty=pU_p$
on the image of $S_{r+2}(X)^{p-{\rm new}}$ in
$H^1(\ccL_r^\bullet\vert_{\cW_\infty})$. Since
$U_p^2=p^r\langle p\rangle$ on the former space %(see \cite[Thm.~3]{Li}),
and we have the relations $U_p\omega_f=a_p(f)\omega_f$ and
$\langle p\rangle\omega_f=\omega_f$ by hypothesis,
it follows that the polynomial
\begin{equation}
P(T)=1-\frac{a_p(f)}{p^{r+1}}T\nonumber
\end{equation}
%\begin{equation}\label{Col-Pol}
%P(T)=1-\frac{a_p}{p^{r+1}}T\nonumber
%\end{equation}
is such that
$P(\Phi_\infty)([\omega_f\vert_{\cW_\infty}])=0$,
and hence also $P(\Phi_0)([\omega_f\vert_{\cW_0}])=0$.
The result thus follows from \cite[Thm.~10.1]{pSI}.
\end{proof}

A locally analytic section $F_{f,\star}$ as in Theorem~\ref{coleman-primitives}
is called a \emph{Coleman primitive} of $f$ on $\cW_\star$.
%(In fact, Coleman further shows that $F_\omega$ is log-analytic along the connected
%components of $\cW-\cZ$.)

%We will say that $F_{\omega,\star}$ is a \emph{Coleman primitive}
%of $\omega$ on $\cW_\star$.

\begin{rem}\label{remD}
For $r>0$, the spaces $H^0(\ccL_r^\bullet\vert_{\cW_\star})$ are trivial,
and so the Coleman primitives $F_{f,\star}$ are unique.
On the other hand, for $r=0$ we have $H^0(\ccL_r^\bullet\vert_{\cW_\star})\simeq\bC_p$,
and so the $F_{f,\star}$ are unique modulo a global constant on $\cW_\star$.
%in the following, we shall fix $F_{f,\infty}$ and $F_{f,0}$ to be the Coleman primitives
%vanishing at the cups $\infty=({\rm Tate}(q),\zeta_{Np})$ and $0=w_p\infty$, respectively.
%On the other hand, as noted in \cite{bdp1}
%Remark 3.17, the Coleman primitives can further be shown to be
%\emph{rigid analytic} on the residue discs of unramified points.
\end{rem}

\subsection{Frobenius and monodromy}

Denote by $\iota$ the inclusion of any
rigid subspace of $X$ into $X$.
Associated with the exact sequence of complexes of sheaves on $X$
\[
0\longrightarrow\ccL_r^\bullet\longrightarrow
\iota^*(\ccL_r^\bullet\vert_{\cW_0})\oplus\iota^*(\ccL_r^\bullet\vert_{\cW_\infty})
\xrightarrow{\rho_\infty-\rho_0}
\iota^*(\ccL_r^\bullet\vert_{\cW})\longrightarrow 0,
\]
%and by its twists by the ideal sheaf $\cI$ of the cuspidal subscheme $Z_X\subset X$,
there is a Mayer--Vietoris long exact sequence
\begin{align}
\cdots&\longrightarrow H^0_{\rm par}(\ccL_r^\bullet\vert_{\cW_0\sqcup\cW_\infty})
\xrightarrow{\;\;\beta^0\;\;}
H^0_{\rm par}(\ccL_r^\bullet\vert_{\cW})\xrightarrow{\;\;\delta\;\;}
H^1_{\rm par}(X,\ccL_r^{\rm rig},\nabla)\longrightarrow\nonumber\\
&\longrightarrow H^1_{\rm par}(\ccL_r^\bullet\vert_{\cW_0\sqcup\cW_\infty})
\xrightarrow{\;\;\beta^1\;\;} H^1_{\rm par}(\ccL_r^\bullet\vert_{\cW})
\longrightarrow\cdots\nonumber
\end{align}
in hypercohomology.
%The connecting homomorphism $\delta$ may be described as follows. %(cf. \cite[Prop.~7.2]{pSI}.
%Set $U'=X\smallsetminus\cW$.
%Then if $F$ is a horizontal section in
%$H_{\rm par}^0(\ccL_r^\bullet\vert_{\cW})$,
%writing  $F=F_\infty\vert_{\cW}-F_0\vert_{\cW}$ for some
%$F_\star\in\ccL_r^{\rm rig}(\cW_\star\smallsetminus S)$, we have that
%$\delta(F)$ is represented by the hypercocycle
%$(\omega_U,0;g_{U,U'})$, where
%$\omega_U\in\ccL_r^{\rm rig}(U)$ is such that
%$\omega_U\vert_{\cW_\star}=\nabla F_\star$ and
%$g_{U,U'}\in\ccL_r^{\rm rig}(U\cap U')$
%is such that $g_{U,U'}\vert_{\cW_\star}=F_\star$.
By \cite[\S{10}]{pSI} and the discussion in the preceding section,
each of the non-central spaces in the resulting short exact sequence
\begin{equation}\label{MV}
0\longrightarrow
\frac{H^0_{\rm par}(\ccL_{r}^\bullet\vert_{\cW})}{\beta^0(H_{\rm
par}^0(\ccL_{r}^\bullet\vert_{\cW_0\sqcup\cW_\infty}))}
\xrightarrow{\;\;\;\delta\;\;\;}H_{\rm par}^1(X,\ccL_r,\nabla)
\longrightarrow H^1_{\rm par}(\ccL_r^\bullet\vert_{\cW_0\sqcup\cW_\infty})^{\beta^1=0}
%(H_{\rm par}^1(\ccL_{r}^\bullet\vert_{\cW_0\sqcup\cW_\infty})
%\xrightarrow{{\rm res}^{\cW_\infty}_\cW-{\rm res}^{\cW_0}_{\cW}}
%H^1(\ccL_{r}^\bullet\vert_{\cW}))
\longrightarrow 0
\end{equation}
is equipped with a Frobenius endomorphism. Therefore, to define a Frobenius action on
$H^1_{\rm par}(X,\ccL_r,\nabla)$ it suffices to construct a splitting of $(\ref{MV})$.
\sk

As shown in \cite[\S{A.5}]{pSI}, this may be obtained as follows. Assume that $\kappa\in H_{\rm
par}^1(X,\ccL_r,\nabla)$ is represented by the hypercocycle
$(\omega_0,\omega_\infty;f_\cW)$ with respect
to the covering $\{\cW_0,\cW_\infty\}$ of $X$. Since $\cW=\bigcup_{x\in SS}\A_x$
is the union of the supersingular annuli,
we may write $f_\cW=\{f_x\}_{x\in SS}$ with $f_x\in\ccL_r^{\rm rig}(\A_x)$.
The assignment
\begin{equation}\label{s}
\A_x\longmapsto F_{\omega_\infty}\vert_{\A_x}-F_{\omega_0}
\vert_{\A_x}-f_x,
\end{equation}
where $F_{\omega_\star}$ is a Coleman
primitive of $\omega_\star$ on $\cW_\star$,
defines a horizontal section of $\ccL_r^{\rm rig}$
on $\cW$, and its image modulo $\beta^0(H_{\rm
par}^0(\ccL_{r}^\bullet\vert_{\cW_0\sqcup\cW_\infty}))$
is independent of the chosen $F_{\omega_\star}$ (see Remark~\ref{remD}).
It is easily checked that $s\delta={\rm id}$,
and hence we may define a Frobenius operator $\Phi$ on $H_{\rm
par}^1(X,\ccL_r,\nabla)$ by requiring that its
action be compatible with the resulting splitting of $(\ref{MV})$.
%\begin{rem}\label{Coleman-splitting}
%%We record (the obvious fact) that with the help of the splitting
%%of $(\ref{MV})$ induced by $s$, a class $\kappa\in H_{\rm
%%par}^1(X,\ccL_r,\nabla)$ represented by the $1$-hypercocycle
%%$(\omega_0,\omega_\infty;f_\cW)$ with respect to the covering
%%$\{\cW_0,\cW_\infty\}$, is decomposed as
%Specifically, a class $\kappa\in H_{\rm par}^1(X,\ccL_r,\nabla)$
%as above is decomposed as
%\[
%\kappa=\delta(s(\kappa))+(\kappa-\delta(s(\kappa)))=
%s(\kappa)+\gamma(\kappa)=
%([F_{\omega_\infty}\vert_\cW-F_{\omega_0}\vert_\cW-f_\cW];
%[\omega_0],[\omega_\infty])
%\]
%in $H^0(\ccL_r^\bullet\vert_{\cW})_{\rm new}\oplus
%H^1_{\rm par}(\ccL_r^\bullet\vert_{\cW_0\sqcup\cW_\infty})^\cW$,
%where the middle identification is made through the application of
%$(s,\gamma)$, and then one set
%$\Phi(\kappa):=\Phi(s(\kappa))+\Phi(\gamma(\kappa))$.
%\end{rem}
\sk

On the other hand, define the monodromy operator $N$
on $H_{\rm par}^1(X,\ccL_r,\nabla)$ by the composite map
\[
H_{\rm par}^1(X,\ccL_r,\nabla)\longrightarrow
H^1(\ccL_{r}^\bullet\vert_{\cW})\xrightarrow{\bigoplus_{x\in SS}{\rm
Res}_{\A_x}}H^0(\ccL_{r}^\bullet\vert_{\cW})\xrightarrow{\;\;\delta\;\;}
H_{\rm par}^1(X,\ccL_r,\nabla),
\]
where ${\rm Res}_{\A_x}$ are the $p$-adic residue maps $(\ref{Resr})$.

\begin{lem}\label{N=0}
Let $\kappa\in H_{\rm par}^1(X,\ccL_r,\nabla)$. Then:
\begin{itemize}
\item[$(i)$] For $r>0$, $N(\kappa)=0\;\Longleftrightarrow\;{\rm Res}_{\A_x}(\kappa)=0$ for all $x\in SS$;
\item[$(ii)$] For $r=0$, $N(\kappa)=0\;\Longleftrightarrow\;$ there is $C\in\bC_p$ such that
${\rm res}_{\A_x}(\kappa)=C$ for all $x\in SS$.
\end{itemize}
\end{lem}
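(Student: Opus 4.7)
My plan is to unwind the definition of $N$ through the Mayer--Vietoris short exact sequence $(\ref{MV})$ and to identify its kernel with the preimage of the image of $\beta^0$. By construction,
\[
N(\kappa) = \delta\Bigl(\textstyle\bigoplus_{x \in SS} {\rm Res}_{\A_x}(\kappa|_\cW)\Bigr),
\]
and the left-hand map $\delta$ in $(\ref{MV})$ is injective on $H^0_{\rm par}(\ccL_{r}^\bullet|_{\cW})/\beta^0(H^0_{\rm par}(\ccL_{r}^\bullet|_{\cW_0 \sqcup \cW_\infty}))$. Hence $N(\kappa)=0$ if and only if the horizontal section $\bigoplus_x {\rm Res}_{\A_x}(\kappa) \in H^0_{\rm par}(\ccL_{r}^\bullet|_{\cW})$ lies in the image of the difference-of-restrictions map
\[
\beta^0:(g_0, g_\infty) \longmapsto g_\infty|_\cW - g_0|_\cW.
\]
Both parts of the lemma then reduce to a direct computation of this image.

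For part $(i)$, with $r > 0$: Remark~\ref{remD} (uniqueness of Coleman primitives in positive weight) gives $H^0(\ccL_r^\bullet|_{\cW_\star}) = 0$ for $\star \in \{0, \infty\}$, so $\beta^0 = 0$ and $\delta$ is outright injective. Hence $N(\kappa) = 0$ is equivalent to ${\rm Res}_{\A_x}(\kappa) = 0$ for every $x \in SS$.

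For part $(ii)$, with $r = 0$: each $\cW_\star$ is a connected basic wide open, so $H^0(\cW_\star) = \cO_{\cW_\star}^{d=0} \simeq \bC_p$ is the line of constants, and $H^0(\cW) = \prod_{x \in SS} H^0(\A_x) \simeq \prod_{x \in SS} \bC_p$. The map $\beta^0: \bC_p \oplus \bC_p \to \prod_{x \in SS} \bC_p$ thus sends $(c_0, c_\infty)$ to the tuple with every component equal to $c_\infty - c_0$, whose image is the diagonal line $\{(C, C, \ldots, C) : C \in \bC_p\}$. Equivalently, $N(\kappa) = 0$ iff all residues ${\rm res}_{\A_x}(\kappa)$ coincide with a single scalar $C \in \bC_p$, as required.

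The one piece of bookkeeping worth flagging is that the composite $\bigoplus_x {\rm Res}_{\A_x}$ appearing in the definition of $N$ is the same map that arises from restriction to $\cW$ in $(\ref{MV})$ followed by the residue isomorphism $(\ref{Resr})$; this is built into the construction of the Mayer--Vietoris sequence reviewed above and is the only non-tautological identification needed. Beyond it, the argument is essentially linear-algebraic, so I do not anticipate any substantive obstacle.
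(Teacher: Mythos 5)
Your proof is correct and takes exactly the route the paper intends: the paper's proof is the one-line observation that Lemma~\ref{N=0} follows from the Mayer--Vietoris sequence $(\ref{MV})$ and Remark~\ref{remD}, and what you have written is precisely the careful unwinding of that observation, including the correct reading of $\ker N$ as the preimage under $\bigoplus_x {\rm Res}_{\A_x}$ of the image of $\beta^0$ (trivial for $r>0$, the diagonal line of constants for $r=0$).
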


\begin{proof}
This follows immediately from the exact sequence $(\ref{MV})$ and
the determination of the spaces $H^0(\ccL_r^\bullet\vert_{\cW_\star})$
recalled in Remark~\ref{remD}.
\end{proof}

By the main result of \cite{CI2}, the operators $\Phi$ and $N$ on
$H_{\rm par}^1(X,\ccL_r,\nabla)$ defined above
agree with the corresponding structures
deduced from the comparison isomorphism of Theorem~\ref{hk}.

\subsection{$p$-adic Gross--Zagier formula}\label{sec:computations}

Fix a finite extension $F/\bQ_p$ containing the image of the Hilbert class field
$H$ of $K$ under our fixed embedding $\overline{\bQ}\overset{\imath_p}\hookrightarrow\bC_p$,
and let $c\geq 1$ be an integer prime to $Np$.

\begin{prop}\label{prop:AJ-Coleman}
Let $f=q+\sum_{n=2}^{\infty}a_n(f)q^n\in S_{r+2}(\Gamma_0(Np))$ be a $p$-new eigenform of weight $r+2\geq 2$. %Let $c\geq 1$ be an integer prime to $Np$,
Let $\varphi:A\longrightarrow A'$ be an isogeny in ${\rm Isog}_c^{\mathfrak{N}}(A)$, let
$P_{A'}\in X(F)$ be the point defined by $(A',t_{A'})$, and
let $\Delta_\varphi$ %\in{\rm CH}^{r+1}(X_r)_0(F)$
be generalized Heegner cycle associated to $\varphi$.
Then for all $\alpha\in{\rm Sym}^rH_{\rm dR}^1(A/F)$, we have
\begin{equation}\label{Delta-}
{\rm AJ}_F(\Delta_\varphi)(\omega_{f^{}}\wedge\alpha)
%=j!\cdot {\rm deg}(\varphi)^j\cdot d^{-1-j}f^{[p]}(A',t_{A'},\omega_{A'}),\nonumber
=\langle F_{f,\infty}(P_{A'})\wedge\alpha,{\rm cl}_{P_{A'}}(\Delta_\varphi)\rangle,\nonumber
\end{equation}
where $F_{f,\infty}$ is the Coleman primitive of $\omega_f\in H^0(X,\underline{\omega}^{\otimes r}\otimes\Omega^1_X)$
on $\cW_\infty$ (vanishing at $\infty$ if $r=0$), and the pairing on the right-hand side is the natural one
on $\ccL_{r,r}(P_{A'})$.
\end{prop}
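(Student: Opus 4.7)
The plan is to follow the strategy of \cite[Prop.~3.21]{bdp1}, replacing the classical Coleman primitive of the crystalline setting with the one from Theorem~\ref{coleman-primitives} in our semistable non-crystalline context. Concretely, I would apply the recipe for ${\rm AJ}_F$ recalled just before (\ref{computeAJ}): build the extension $D_{\Delta_\varphi}$ of filtered $(\Phi,N)$-modules fitting into the commutative diagram there, exhibit a filtration-compatible section $\eta^{\rm hol}_{\Delta_\varphi}$ and a $(\Phi,N)$-compatible section $\eta^{\rm frob}_{\Delta_\varphi}$ of its quotient map, and compute ${\rm AJ}_F(\Delta_\varphi)(\omega_f \wedge \alpha)$ as the Poincar\'e pairing $\langle \omega_f \wedge \alpha, \eta^{\rm hol}_{\Delta_\varphi}(1) - \eta^{\rm frob}_{\Delta_\varphi}(1) \rangle$ via Proposition~\ref{prop:poincare}(2).

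I would choose a set $S$ of points as in Section~\ref{sec:p-adic-dR} containing $P_{A'}$, its Atkin--Lehner image $P_{A'}^\ast = w_p(P_{A'})$, and all cusps, and represent classes by hypercocycles on a covering $\{\cU, V\}$ with $V$ a disjoint union of small discs around the points of $S$. The section $\eta^{\rm hol}_{\Delta_\varphi}$ is essentially tautological, as $\omega_f \wedge \alpha$ is a global section of ${\rm Fil}^{r+1}$. For $\eta^{\rm frob}_{\Delta_\varphi}$, observe that $P_{A'}$ lies in $\cW_\infty$ (because ${\rm ker}(\alpha'_\pp) = A'[\pp]$ is the canonical subgroup), so near $P_{A'}$ I would subtract the Coleman primitive $F_{f,\infty}$ from $\omega_f$. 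By Theorem~\ref{coleman-primitives}(ii) this modification is annihilated in cohomology by $P(\Phi_\infty) = 1 - \frac{a_p(f)}{p^{r+1}}\Phi_\infty$, which is precisely the characteristic polynomial of $\Phi_\infty$ on $[\omega_f]$ for a $p$-new eigenform; combined with the description of $\Phi$ and $N$ in Section~\ref{sec:fmandCol}, this will yield the required Frobenius and monodromy compatibilities.

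I would then apply Proposition~\ref{prop:poincare}(2) to express the pairing as $\sum_{Q \in S} {\rm res}_{\cV_Q} \langle F_{\omega,Q} \wedge \alpha, \eta_\cU \rangle$, with $F_{\omega,Q}$ a local primitive of $\omega_f$ on $\cV_Q$. The terms with $Q \neq P_{A'}$ should vanish: at the cusps by the cuspidality of $\omega_f$ (with the normalization $F_{f,\infty}(\infty) = 0$ in the $r = 0$ case, as allowed by Remark~\ref{remD}), and at the Atkin--Lehner partner $P_{A'}^\ast \in \cW_0$ because the cycle class of $\Delta_\varphi$ is supported only at $P_{A'}$, so $\eta_\cU$ may be taken regular there. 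At $Q = P_{A'}$ the local primitive $F_{\omega,P_{A'}}$ coincides with $F_{f,\infty}$ modulo elements of $H^0(\ccL_r^\bullet \vert_{\cW_\infty})$, which pair trivially with the cycle class; evaluating the residue via the identifications (\ref{eq:ident}) then yields the claimed $\langle F_{f,\infty}(P_{A'}) \wedge \alpha, {\rm cl}_{P_{A'}}(\Delta_\varphi) \rangle$.

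The hard part will be verifying the monodromy compatibility of $\eta^{\rm frob}_{\Delta_\varphi}$, the genuinely new feature relative to \cite{bdp1}: via Lemma~\ref{N=0} this reduces to showing that, after the Coleman-primitive correction, the supersingular annular residues of the representing hypercocycle either vanish or cancel up to the localized cycle-class contribution at $P_{A'}$. This in turn rests on the $p$-new hypothesis and the explicit form of Frobenius on $H^1(\ccL_r^\bullet \vert_{\cW_\infty})$ recalled in Section~\ref{sec:fmandCol}.
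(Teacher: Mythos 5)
Your overall framework is correct and matches the paper's: use the recipe preceding $(\ref{computeAJ})$, split ${\rm AJ}_F(\Delta_\varphi)(\omega_f\wedge\alpha)$ into $\eta^{\rm hol}_\Delta$ and $\eta^{\rm frob}_\Delta$ contributions, and expand via Proposition~\ref{prop:poincare}(2) as a sum of residues ${\rm res}_{\cV_Q}\langle F_{f,Q}\wedge\alpha,\eta_\cU\rangle$ over $Q\in S$. But there is a genuine gap in how you dispose of the $\eta^{\rm frob}_\Delta$ contribution. You claim the individual terms at $Q\neq P_{A'}$ vanish — at cusps by cuspidality, at $P_{A'}^*$ ``because $\eta_\cU$ may be taken regular there.'' This term-by-term vanishing fails for the $\eta^{\rm frob}$ piece: $\eta^{\rm frob}_\Delta$ is only rigid analytic on $\cU$, which contains the annulus $\cV_Q$ but not the full disc $D(Q)$, so its Laurent expansion on $\cV_Q$ can have nontrivial negative-degree tails even where the cycle class (and hence the residue of $\eta^{\rm frob}_\Delta$ itself) is zero. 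Pairing against a locally analytic primitive $F_{f,Q}$, which likewise need not extend to the disc, one cannot conclude that ${\rm res}_{\cV_Q}\langle F_{f,Q}\wedge\alpha,\eta^{\rm frob}_\Delta\rangle=0$ at $P_{A'}^*$ or at the cusps.

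What the paper actually proves is that the \emph{total} sum $\sum_{Q\in S}{\rm res}_{\cV_Q}\langle F_{f,Q}\wedge\alpha,\eta^{\rm frob}_\Delta\rangle$ vanishes, and this requires a global argument pairing the two sides of the semistable model. One applies the rigid Residue Theorem on $\cW_\infty$ to the section $F_{f,\infty}^{[p]}=F_{f,\infty}-\tfrac{a_p(f)}{p^{r+1}}\phi_\infty^*F_{f,\infty}$ (rigid on $\cW_\infty'$ by Theorem~\ref{coleman-primitives}(ii)) to relate the residues at $S\cap\cW_\infty$ to residues over the supersingular annuli; the monodromy vanishing $N(\eta^{\rm frob}_\Delta)=0$ — which is a \emph{given} property of the Frobenius-compatible section of the extension in Lemma~\ref{fil}, not something to be verified as you suggest — is then fed into Lemma~\ref{N=0} to write $\eta^{\rm frob}_\Delta=\nabla G_x$ on each $\A_x$ and integrate by parts. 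A parallel computation on $\cW_0$ with the Coleman primitive $F_{f,0}$, together with the opposite orientations of the supersingular annuli and the $p$-new relation $a_p(f)^2=p^r$, causes the supersingular contributions to cancel between the two components, yielding $\bigl(1-\tfrac{a_p(f)}{p^{r+1}}\bigr)\sum_{Q\in S}{\rm res}_{\cV_Q}\langle F_{f,Q}\wedge\alpha,\eta^{\rm frob}_\Delta\rangle=0$ with nonzero prefactor. This cancellation across $\cW_\infty$ and $\cW_0$ is the essential new content of the semistable non-crystalline setting; the term-by-term dismissal you propose misses it, and is the step that would fail.
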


\begin{proof}
%Denote by $P_{A'}$ the point of $X$ corresponding to the triple
%$(A',t_{A'},\alpha_\pp')$, and let $S$ be a finite set
%of points as in \S\ref{sec:p-adic-dR} containing $P_{A'}$.
Following the recipe described at the end of Section~\ref{sec:p-adic-AJ}, we have
\begin{equation}\label{eq:first}
{\rm AJ}_F(\Delta_\varphi)(\omega_{f^{}}\wedge\alpha)
=\langle\omega_{f^{}}\wedge\alpha,\eta_\Delta^{\rm hol}-\eta_\Delta^{\rm frob}\rangle,
\end{equation}
where:
\begin{itemize}
\item{} $\eta_\Delta^{\rm hol}$ is a cohomology class represented by a section
(still denoted $\eta_{\Delta}^{\rm hol}$) of $\ccL_{r,r}\otimes\Omega_X^1(\log Z_X)$ over $U$
having residue $0$ at the cusps, and with a simple pole at $P_{A'}$ with residue ${\rm cl}_{P_{A'}}(\Delta_\varphi)$;
\item{} $\eta_\Delta^{\rm frob}$ is section of $\ccL_{r,r}^{\rm rig}\otimes\Omega_X^1$ over $\cU$
having the same residues as $\eta_\Delta^{\rm hol}$, and satisfying $N(\eta_\Delta^{\rm frob})=0$
and
\begin{equation}\label{frob}
\Phi\eta_\Delta^{\rm frob}=\eta_\Delta^{\rm frob}+\nabla G, %\quad\textrm{where $n=[F:\bQ_p]$}
\end{equation}
for some rigid section $G$ of $\ccL_{r,r}^{\rm rig}$
on a strict neighborhood of $(\cZ_0\cap\cW_0^\sharp)\cup(\cZ_\infty\cap\cW_\infty^\sharp)$ in $\cU$.
\end{itemize}
By the formula for the Poincar\'e pairing in Proposition~\ref{prop:poincare},
equation $(\ref{eq:first})$ may be rewritten as
\begin{align}\label{AJdelta}
{\rm AJ}_F(\Delta_\varphi)(\omega_{f}\wedge\alpha)
&=\sum_{Q\in S}{\rm res}_{\cV_Q}\langle F_{f^{},Q}
\wedge\alpha,\eta_\Delta^{\rm hol}-\eta_\Delta^{\rm frob}\rangle,
%&=\biggl(\sum_{Q\in S}{\rm res}_{\cV_Q}\langle F_{f^{},Q}
%\wedge\alpha,\eta_\Delta^{\rm hol}\rangle\biggr)
%-\biggl(\sum_{Q\in S}{\rm res}_{\cV_Q}\langle F_{f^{},Q}
%\wedge\alpha,\eta_\Delta^{\rm res}\rangle\biggr),
\end{align}
where $F_{f^{},Q}\in\ccL_r^{}(\cV_Q)$ is an arbitrary local primitive of $\omega_{f^{}}$ on the annulus $\cV_Q$.
(Note that here we are using the fact that the connection $\nabla$ on
$\ccL_{r,r}=\ccL_r\otimes{\rm Sym}^rH_{\rm dR}^1(A/F)$ is defined from
the Gauss--Manin connection on $\ccL_r$ by extending it trivially on the second factor.)
%In order to make the calculation explicit, it will be necessary to make
%a cohorent choice of these primitives.

If $F_{f,\infty}$ is a Coleman primitive of $\omega_f$ on $\cW_\infty$,
then $F_{f,\infty}^{[p]}:=F_{f,\infty}-\frac{a_p(f)}{p^{r+1}}\phi_\infty^*F_{f,\infty}$
is rigid analytic on $\cW_\infty'\subset\cW_\infty$ by Theorem~\ref{coleman-primitives},
and hence
\begin{equation}\label{+Ax}
\sum_{Q\in S\cap\cW_\infty}{\rm res}_{\cV_Q}\langle
F_{f,\infty}^{[p]}\wedge\alpha,\eta_\Delta^{\rm frob}\rangle
+\sum_{x\in SS}{\rm res}_{\A_x}\langle
F_{f,\infty}^{[p]}\wedge\alpha,\eta_{\Delta}^{\rm frob}\rangle=0
\end{equation}
by the Residue Theorem (see \cite[Thm.~3.8]{bdp1}).
Since $N(\eta_\Delta^{\rm frob})=0$,
Lemma~\ref{N=0} implies that we can write $\eta_\Delta^{\rm frob}=\nabla G_x$ for
some rigid section $G_x\in\ccL_{r,r}^{\rm rig}(\A_x)$ on each supersingular
annulus $\A_x$, and hence
\begin{align*}
d\langle F_{f,\infty}^{[p]}\wedge\alpha,G_x\rangle
&=\langle\nabla F_{f,\infty}^{[p]}\wedge\alpha,G_x\rangle+
\langle F_{f,\infty}^{[p]}\wedge\alpha,\eta_\Delta^{\rm frob}\rangle.
\end{align*}
In particular, the right-hand side in the last equality has residue $0$, and hence
\begin{equation}\label{Ax}
{\rm res}_{\A_x}\langle F_{f,\infty}^{[p]}\wedge\alpha,\eta_\Delta^{\rm frob}\rangle
=-{\rm res}_{\A_x}\langle\nabla F_{f,\infty}^{[p]}\wedge\alpha,G_x\rangle,
\end{equation}
Plugging $(\ref{Ax})$ into $(\ref{+Ax})$, we arrive at
\begin{equation}\label{00}
\sum_{Q\in S\cap\cW_\infty}{\rm res}_{\cV_Q}\langle
F_{f,\infty}^{[p]}\wedge\alpha,\eta_\Delta^{\rm frob}\rangle
-\sum_{x\in SS}{\rm res}_{\A_x}\langle
\nabla F_{f,\infty}^{[p]}\wedge\alpha,\eta_{\Delta}^{\rm frob}\rangle=0.
\end{equation}
An entirely parallel reasoning with $\cW_0$ in place of $\cW_\infty$ yields a proof of the equality
\begin{equation}\label{0}
\sum_{Q\in S\cap\cW_0}{\rm res}_{\cV_Q}\langle
F_{f,0}^{[p]}\wedge\alpha,\eta_\Delta^{\rm frob}\rangle
+\sum_{x\in SS}{\rm res}_{\A_x}\langle
\nabla F_{f,\infty}^{[p]}\wedge\alpha,\eta_{\Delta}^{\rm frob}\rangle=0,
\end{equation}
where $F_{f,0}$ is a Coleman primitive of $\omega_f$ on $\cW_0$,
and where we used the fact that the supersingular annuli acquire opposite orientations
with respect to $\cW_\infty$ and $\cW_0$. Combining $(\ref{00})$ and $(\ref{0})$,
we get
\begin{equation}\label{contrib-res}
0=\sum_{Q\in S}{\rm res}_{\cV_Q}\langle
F_{f,Q}^{[p]}\wedge\alpha,\eta_\Delta^{\rm frob}\rangle
=\left(1-\frac{a_p(f)}{p^{r+1}}\right)\sum_{Q\in S}{\rm res}_{\cV_Q}\langle F_{f,Q}\wedge\alpha,\eta_\Delta^{\rm frob}\rangle,
\end{equation}
where $F_{f,Q}^{[p]}$ denotes $F_{f,\infty}^{[p]}$ or $F_{f,0}^{[p]}$ depending on whether
$Q\in\cW_\infty$ or $\cW_0$, respectively, using $(\ref{frob})$ for the second equality
(see the argument \cite[p.1079]{bdp1}).

Since $a_p(f)^2=p^r$, this shows that there is no contribution from $\eta_\Delta^{\rm frob}$ in $(\ref{AJdelta})$.
On the other hand, since by the choice of $\eta_\Delta^{\rm hol}$ we easily have
\[
\sum_{Q\in S}{\rm res}_{\cV_Q}\langle
F_{f,Q}^{}\wedge\alpha,\eta_\Delta^{\rm hol}\rangle
=\langle F_{f,\infty}^{}(P_{A'})\wedge\alpha,{\rm cl}_{P_{A'}}(\Delta_\varphi)\rangle
\]
(see \cite[Lemma~3.19]{bdp1}), the result follows.
\end{proof}

Let $(A,t_A,\omega_A)$ be the CM triple introduced in Section~\ref{subsubsec:A},
and let $\eta_A\in H_{\rm dR}^1(A/F)$ be the class determined by the conditions
\[
\lambda^*\eta_A=\lambda^\rho\eta_A\quad\textrm{for all $\lambda\in\cO_K$,}\quad\quad\textrm{and}\quad\quad
\langle\omega_A,\eta_A\rangle_A=1,
\]
where $\lambda\mapsto\lambda^\rho$ denotes the action of the non-trivial automorphism of $K$,
and $\langle,\rangle_A$ is the cup product pairing on $H^1_{\rm dR}(A/F)$.
If %$c$ is any positive integer prime to $Np$ and
$(A',t_{A'},\omega_{A'})$ is the CM triple induced
from $(A,t_{A},\omega_{A})$ by an isogeny $\varphi\in{\rm Isog}_c^{\mathfrak{N}}(A)$, we define
$\eta_{A'}\in H_{\rm dR}^1(A'/F)$ by the analogous recipe. For the integers $j$ with $0\leq j\leq r$,
the classes $\omega_{A'}^j\eta_{A'}^{r-j}$ defined in \cite[(1.4.6)]{bdp1} then form a basis of ${\rm Sym}^rH_{\rm dR}^1(A'/F)$.

\begin{lem}\label{lem:3.22}
Let the notations be as in Proposition~\ref{prop:AJ-Coleman}.
Then, for each $0\leq j\leq r$, we have
\[
{\rm AJ}_F(\Delta_\varphi)(\omega_f\wedge\omega_A^j\eta_A^{r-j})=
{\rm deg}(\varphi)^j\cdot\langle F_{f,\infty}(P_{A'}),\omega_{A'}^j\eta_{A'}^{r-j}\rangle_{A'},
\]
where $F_{f,\infty}$ is the Coleman primitive of $\omega_f\in H^0(X,\underline{\omega}^{\otimes r}\otimes\Omega^1_X)$
on $\cW_\infty$ (vanishing at $\infty$ if $r=0$),
and the pairing $\langle,\rangle_{A'}$ on the right-hand side is the natural one
on ${\rm Sym}^rH_{\rm dR}^1(A'/F)$.
\end{lem}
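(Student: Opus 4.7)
The plan is to invoke Proposition~\ref{prop:AJ-Coleman} with $\alpha = \omega_A^j\eta_A^{r-j}$, reducing the desired identity to a computation of the local pairing
\[
\langle F_{f,\infty}(P_{A'}) \wedge \omega_A^j \eta_A^{r-j},\ {\rm cl}_{P_{A'}}(\Delta_\varphi)\rangle
\]
in $\ccL_{r,r}(P_{A'}) \simeq {\rm Sym}^r H_{\rm dR}^1(A'/F) \otimes {\rm Sym}^r H_{\rm dR}^1(A/F)$. Since $F_{f,\infty}(P_{A'})$ already lies in the first tensor factor and the argument on the second factor is fixed, the question becomes a purely Hodge-theoretic computation of the cycle class ${\rm cl}_{P_{A'}}(\Delta_\varphi)$.

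First I would unwind the definition $\Delta_\varphi = \epsilon_X \Upsilon_\varphi$ with $\Upsilon_\varphi = (\Gamma_\varphi^{\rm t})^r$, so that the fibral cycle class over $P_{A'}$ is obtained as the $r$-fold Künneth product of $[\Gamma_\varphi^{\rm t}] \in H^2((A'\times A)/F)$, with $\epsilon_X$ cutting out the summand ${\rm Sym}^r H_{\rm dR}^1(A'/F) \otimes {\rm Sym}^r H_{\rm dR}^1(A/F)$. The normalization of the CM triple $(A', t_{A'}, \omega_{A'})$ induced from $(A, t_A, \omega_A)$ via $\varphi$ gives $\varphi^*\omega_{A'} = \omega_A$, and combined with $\varphi_*\varphi^* = [\deg\varphi]$ together with $\langle\omega_A,\eta_A\rangle_A = \langle\omega_{A'},\eta_{A'}\rangle_{A'} = 1$, this forces $\varphi^*\eta_{A'} = \deg(\varphi)\cdot\eta_A$. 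These two identifications then yield the explicit decomposition of $[\Gamma_\varphi^{\rm t}]$ in the Künneth basis arising from $\{\omega_{A'}, \eta_{A'}\}\otimes\{\omega_A, \eta_A\}$.

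Finally, pairing this decomposition against $F_{f,\infty}(P_{A'}) \wedge \omega_A^j\eta_A^{r-j}$ and invoking the orthogonality of $\omega^a\eta^{r-a}$ and $\omega^b\eta^{r-b}$ under the natural pairing on ${\rm Sym}^r H_{\rm dR}^1$ (non-zero only when $a+b=r$), only a single term of the symmetrized cycle class contributes, and it carries the factor $\deg(\varphi)^j$ arising from $(\varphi^*\eta_{A'})^j = \deg(\varphi)^j\cdot\eta_A^j$ in the symmetrization. I expect the main obstacle to be the bookkeeping of conventions: the sign of the idempotent $\epsilon_X$, the Künneth class of the graph, and the two symmetric Poincar\'e pairings must all be tracked consistently with those fixed in \cite{bdp1}, whose parallel computation in the good reduction case (see in particular Proposition~3.24 of \emph{loc.cit.}) serves as the model to emulate.
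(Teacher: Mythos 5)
Your proposal is correct and follows the same route as the paper, which simply defers to the proof of Lemma~3.22 in \cite{bdp1}: apply Proposition~\ref{prop:AJ-Coleman} with $\alpha=\omega_A^j\eta_A^{r-j}$, then compute the cycle class $\mathrm{cl}_{P_{A'}}(\Delta_\varphi)$ in the K\"unneth basis using $\varphi^*\omega_{A'}=\omega_A$ and $\varphi^*\eta_{A'}=\deg(\varphi)\eta_A$ (the latter following from $\varphi_*\varphi^*=[\deg\varphi]$ and the normalization $\langle\omega_{A'},\eta_{A'}\rangle_{A'}=1$). Your identification of the surviving term in the symmetrization and the source of the $\deg(\varphi)^j$ factor matches the BDP computation that the paper cites as its proof.
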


\begin{proof}
This follows from Proposition~\ref{prop:AJ-Coleman} as in \cite[Lemma~3.22]{bdp1}.
\end{proof}

Recall that if $f\in S_{k}(X)$ is a cusp form of weight $k$ and level $\Gamma$, then
$f\vert_{\cW_\infty}$ defines a $p$-adic modular form $f_p\in\mathcal{M}_k(N)$ of weight $k$ and tame level $N$.
Evaluated on a CM triple $(A',t_{A'},\omega_{A'})$ of conductor $c$ prime to $p$, we then have
\[
f_p(A',t_{A'},\omega_{A'})=f(A',t_{A'},\alpha_\pp',\omega_{A'}),
\]
where $\alpha_\pp':A'\longrightarrow A'/A'[\pp]$ is the $p$-isogeny
defined by the canonical subgroup of $A'$ (see Section~\ref{subsubsec:A}).
By abuse of notation, in the following we will denote $f_p$ also by $f$.
The map $V$ defined in $(\ref{V})$ yields an operator
$V:\mathcal{M}_k(N)\longrightarrow\mathcal{M}_k(N)$ on $p$-adic modular forms whose effect on
$q$-expansions is given by $q\mapsto q^p$. Let $a_p(f)$ be the $U_p$-eigenvalue of $f$,
and define the \emph{$p$-depletion} of $f$ by
\[
f^{[p]}:=f-a_p(f)Vf
\]
Letting $d=q\frac{d}{dq}:\mathcal{M}_k(N)\longrightarrow\mathcal{M}_{k+2}(N)$ be
the Atkin--Serre operator, for any integer $j$ the limit
\[
d^{-1-j}f^{[p]}:=\lim_{t\to -1-j}d^tf^{[p]}
\]
is a $p$-adic modular form of weight $k-2-j$ and tame level $N$ (see \cite[Thm.~5]{Serre350}).

\begin{lem}\label{3.24}
Let the notation be as in Proposition~\ref{prop:AJ-Coleman}.
Then for each $0\leq j\leq r$ there exists a locally analytic $p$-adic modular form $G_j$ of weight
$r-j$ and tame level $N$ such that
\begin{equation}\label{eq:G}
\langle F_{f,\infty}(P_{A'}),\omega_{A'}^j\eta_{A'}^{r-j}\rangle_{A'}=G_j(A',t_{A'},\omega_{A'}),
\end{equation}
where $F_{f,\infty}$ is the Coleman primitive of $\omega_f$ on $\cW_\infty$ (vanishing at $\infty$ if $r=0$), and
\begin{equation}\label{katz}
G_j(A',t_{A'},\omega_{A'})-\frac{a_p(f)}{p^{r-j+1}}G_j(\pp*(A',t_{A'},\omega_{A'}))=
j! d^{-1-j}f^{[p]}(A',t_{A'},\omega_{A'}).
\end{equation}
\end{lem}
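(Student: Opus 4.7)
The plan is to take $(\ref{eq:G})$ as the definition of $G_j$ and then verify its two claimed properties. Local analyticity and the claimed modular transformation follow from the local analyticity of the Coleman primitive $F_{f,\infty}$ on $\cW_\infty$ (Theorem~\ref{coleman-primitives}) together with the scaling $\eta_{A'} \mapsto \lambda^{-1}\eta_{A'}$ forced by $\omega_{A'} \mapsto \lambda\omega_{A'}$ via the normalization $\langle\omega_{A'},\eta_{A'}\rangle_{A'}=1$.

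For the identity $(\ref{katz})$, I would closely parallel the argument of \cite[Lemma~3.24]{bdp1}. Restrict to the ordinary locus and invoke the unit root splitting $\ccL|_{\cW_\infty} = \underline{\omega} \oplus \mathcal{U}^{\mathrm{ur}}$, where $\mathcal{U}^{\mathrm{ur}}$ is the unique Frobenius-stable complement to $\underline{\omega}$. This yields
\[
\ccL_r = \bigoplus_{i=0}^r \underline{\omega}^{\otimes i} \otimes (\mathcal{U}^{\mathrm{ur}})^{\otimes (r-i)},
\]
which, evaluated at an ordinary CM triple, matches the basis $\{\omega_{A'}^i \eta_{A'}^{r-i}\}$ of $\mathrm{Sym}^r H^1_{\mathrm{dR}}(A'/F)$. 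Writing $F_{f,\infty} = \sum_i F^{(i)}_{f,\infty}$ accordingly, the pairing in $(\ref{eq:G})$ extracts the component $F^{(r-j)}_{f,\infty}$, up to a combinatorial factor depending only on $j$ and $r$.

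The identity is then deduced from iterating the relation $\nabla F_{f,\infty} = \omega_f$. Since the unit root splitting is not horizontal, $\nabla$ has a non-trivial off-diagonal piece which, under the Kodaira--Spencer isomorphism $\Omega_X^1 \cong \underline{\omega}^{\otimes 2}$, is given on $q$-expansions by the Atkin--Serre operator $d$. Iterating $(j+1)$ times produces $F^{(r-j)}_{f,\infty}$ as a $(j+1)$-fold primitive under $d$ of (the $p$-adic modular form attached to) $f$; the factorial $j!$ arises from this iteration. Applying Theorem~\ref{coleman-primitives}(ii) component-by-component in the unit root decomposition then converts $F_{f,\infty}$ into its ``$F$-depleted'' analogue, which corresponds on $q$-expansions to replacing $f$ by $f^{[p]}$; this is exactly what makes $d^{-1-j}$ $p$-adically well-defined and accounts for the shifted Frobenius eigenvalue ratio $a_p(f)/p^{r-j+1}$ appearing on the left-hand side of $(\ref{katz})$.

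The main obstacle will be carefully tracking the combinatorial coefficients through the iterated application of $\nabla$ and the pairing extraction, so that the factorial $j!$, the correct powers of $p$, and the precise shift $r-j+1$ in the exponent all align. This bookkeeping is carried out in detail in \cite[Lemma~3.24]{bdp1} in the $p$-ordinary setting, and its adaptation to the present $p$-new setting is essentially formal, with $a_p(f)$ now playing the role of the $U_p$-eigenvalue of the $p$-new form.
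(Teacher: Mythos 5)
Your proposal follows exactly the route taken by the paper, which simply cites \cite{bdp1} (p.\,1083, Lemma~3.23, and Prop.~3.24) for both the construction of $G_j$ and the proof of $(\ref{katz})$; your sketch is a faithful outline of the content of those cited arguments. The unit-root splitting of $\ccL$ over $\cW_\infty$, the extraction of the relevant component of $F_{f,\infty}$, the iteration of $\nabla F_{f,\infty}=\omega_f$, and the conversion of Theorem~\ref{coleman-primitives}$(ii)$ into the $p$-depletion statement $(\ref{katz})$ on $q$-expansions are precisely the ingredients of BDP's argument; the only adaptation (as you correctly indicate) is that $P(T)$ is linear in the $p$-new setting, which is reflected both in the one-term overconvergence relation for the Coleman primitive and in the two-term definition $f^{[p]}=f-a_p(f)Vf$. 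One small remark: your own scaling computation (under $\omega_{A'}\mapsto\lambda\omega_{A'}$ one has $\eta_{A'}\mapsto\lambda^{-1}\eta_{A'}$, so $\langle F_{f,\infty}(P_{A'}),\omega_{A'}^j\eta_{A'}^{r-j}\rangle_{A'}$ scales by $\lambda^{2j-r}$) shows that $G_j$ has weight $r-2j$ rather than the weight $r-j$ stated in the lemma; this also matches the right-hand side of $(\ref{katz})$, since $d^{-1-j}f^{[p]}$ has weight $(r+2)-2(1+j)=r-2j$. The stated weight $r-j$ appears to be a typo, and your verification actually establishes the correct value.
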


\begin{proof}
The construction of $G_j$ as the ``$j$-th component'' of $F_{f,\infty}$ is given in \cite[p.1083]{bdp1},
and $(\ref{eq:G})$ then follows from the definition. On the the other hand, $(\ref{katz})$ follows
from the same calculations as in [\emph{loc.cit}, Lemma~3.23 and Prop.~3.24].
\end{proof}

We now relate the expression appearing in the right-hand side of
Proposition~\ref{prop:AJ-Coleman} to the value of a certain $p$-adic $L$-function associated to $f$.
\sk

Recall that $(A,t_A,\omega_A)$ denotes the CM triple introduced in Section~\ref{subsubsec:A}, and
fix an elliptic curve $A_0/H_c$ with ${\rm End}_{H_c}(A_c)\simeq\cO_c$.
The curve $A_0$ is related to $A$ by an isogeny $\varphi_0:A\longrightarrow A_0$ in
${\rm Isog}_c^{\mathfrak{N}}(A)$, and we let $(A_0,t_0,\omega_0)$
be the induced triple. %, i.e., $t_0:=\varphi_0(t_A)$ and $\omega_0:=(\varphi_0^*)^{-1}(\omega_A)$.
Since we assume that $p=\pp\overline{\pp}$ splits in $K$, we may fix an isomorphism
$\mu_{p^\infty}\simeq\mathcal{A}_0[\pp^\infty]$ of $p$-divisible groups, where
$\mathcal{A}_0/\cO_{\bC_p}$ is a good integral model of $A_0$. This amounts to the
choice of an isomorphism $\imath:\hat{\mathcal{A}}_0\longrightarrow\hat{\mathbb{G}}_m$ of formal groups,
and we let $\Omega_p\in\bC_p^\times$ be the $p$-adic period defined by the rule
\[
\omega_0=\Omega_p\cdot\omega_{\rm can},%\quad\textrm{where $\omega_{\rm can}:=\imath^*\frac{dt}{t}$}
\]
where $\omega_{\rm can}:=\imath^*\frac{dt}{t}$ for the standard coordinate $t$ on $\hat{\mathbb{G}}_m$.
\sk

Finally, consider the set $\Sigma_{k,c}^{+}$ of algebraic Hecke characters
$\chi:K^\times\backslash\mathbb{A}_K^\times\longrightarrow\bC^\times$ of conductor $c$,
infinity type $(k+j,-j)$ with $j\geq 0$ (with the convention in \cite[p.1089]{bdp1}),
and such that
\[
\chi\vert_{\mathbb{A}_\bQ^\times}=\boldsymbol{\rm N}^{k},
\]
where $\boldsymbol{\rm N}$ is the norm character on $\mathbb{A}_\bQ^\times$, and for every
$\chi\in\Sigma_{k,c}^+$ %of infinity type $(k+j,-j)$ with $j\geq 0$,
set
\begin{equation}\label{def:Lp}
L_\pp(f)(\chi):=\sum_{[\fa]\in{\rm Pic}(\cO_c)}\chi^{-1}(\fa){\rm N}(\fa)^{-j}
\cdot d^jf^{[p]}(\fa*(A_0,t_0,\omega_{\rm can})),
\end{equation}
and define
\begin{equation}\label{eq:alg}
L_{\rm alg}(f,\chi^{-1}):=w(f,\chi)^{-1}C(f,\chi,c)\cdot\frac{L(f,\chi^{-1},0)}{\Omega^{2(k+2j)}},\nonumber
\end{equation}
where $w(f,\chi)$ and $C(f,\chi,c)$ are the constants defined in \cite[(5.1.11)]{bdp1}
and [\emph{loc.cit.}, Thm.~4.6], respectively, $\Omega$ is the complex period in [\emph{loc.cit.}, (5.1.16)],
and $L(f,\chi^{-1},0)$ is the central critical value of the Rankin--Selberg $L$-function
$L(f\times\theta_{\chi^{-1}},s)$ of $f$ and the theta series of $\chi^{-1}$.
\sk

As explained in \cite[p.1134]{bdp1}, the set $\Sigma_{k,c}^{+}$
may be endowed with a natural $p$-adic topology,
and we let $\hat{\Sigma}_{k,c}$ denote its completion.

\begin{thm}\label{5.27-5.28}
The assignment $\chi\mapsto L_\pp(f)(\chi)$ extends to a continuous function
on $\hat{\Sigma}_{k,c}$ and satisfies the following interpolation property.
If $\chi\in\Sigma_{k,c}^+$ has infinity type $(k+j,-j)$, with $j\geq 0$, then
\[
\frac{L_\pp(f)(\chi)^2}{\Omega_p^{2(k+2j)}}=(1-a_p(f)\chi^{-1}(\bar{\pp}))^2
\cdot L_{\rm alg}(f,\chi^{-1},0).
\]
\end{thm}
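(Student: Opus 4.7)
The plan is to adapt the proof of \cite[Thms.~5.27 and~5.28]{bdp1} to the present semistable non-crystalline setting. The three ingredients used there---Katz's theory of $p$-adic modular forms, Serre's $p$-adic differential operators, and the Waldspurger-type formula of \cite[Thm.~4.6]{bdp1}---all extend uniformly to our case, since $f\in S_2(\Gamma_0(Np))$ remains $p$-ordinary (indeed, $U_p$-ordinary with eigenvalue $a_p(f)=1$).

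For the continuity assertion, the key observation is that the $p$-depletion $f^{[p]}=f-a_p(f)Vf$ has $q$-expansion supported on integers prime to $p$. By Katz's theory, at each ordinary CM triple $\fa*(A_0,t_0,\omega_{\rm can})$ (ordinary since $p=\pp\overline{\pp}$ splits in $K$), the Serre--Tate coordinates identify $f^{[p]}$ with a power series on the formal deformation space, and the Atkin--Serre operator $d=q\frac{d}{dq}$ acts as $T\frac{d}{dT}$ in these coordinates. It follows that the family $\{d^jf^{[p]}(\fa*(A_0,t_0,\omega_{\rm can}))\}_{j\in\bZ}$ varies $p$-adically continuously in $j$; combined with the continuity of $\chi\mapsto\chi^{-1}(\fa){\rm N}(\fa)^{-j}$ and the finiteness of ${\rm Pic}(\cO_c)$, this yields the continuous extension of $L_\pp(f)$ to $\hat{\Sigma}_{k,c}$.

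For the interpolation formula, one takes $\chi\in\Sigma_{k,c}^+$ of infinity type $(k+j,-j)$ with $j\geq 0$, and rewrites $d^jf^{[p]}(\fa*(A_0,t_0,\omega_{\rm can}))$ in terms of the classical Shimura--Maass operator applied to $f$ and evaluated on the CM triple. The effect of passing from $f$ to $f^{[p]}$ is to remove the local $L$-factor at $\overline{\pp}$, producing the term $(1-a_p(f)\chi^{-1}(\overline{\pp}))$; the Waldspurger-type formula of \cite[Thm.~4.6]{bdp1}, applied to the Petersson pairing of $f$ with the theta series $\theta_{\chi^{-1}}$, then expresses the resulting class-group sum (up to the constants $w(f,\chi)$ and $C(f,\chi,c)$) as the square root of $L_{\rm alg}(f,\chi^{-1},0)$ times $\Omega_p^{k+2j}$. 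Squaring both sides yields the claimed identity.

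The main obstacle is ensuring that the Petersson--Rankin--Selberg computations of \cite[\S 4]{bdp1}, which were carried out for $f$ of level $\Gamma_0(N)$ (good reduction at $p$), carry over when $f$ has level $\Gamma_0(Np)$ and is $p$-new with $a_p(f)=1$. This requires a careful choice of local test vectors at $p$ so that the local Rankin--Selberg zeta integral at $p$ produces precisely the factor $(1-a_p(f)\chi^{-1}(\overline{\pp}))$ that matches the $p$-depletion. Since $\chi$ is unramified at $p$ and $f$ corresponds to the (unramified twist of the) Steinberg representation at $p$, this reduces to a relatively explicit local computation.
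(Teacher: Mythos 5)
Your overall strategy---adapt the proof of \cite[Thm.~5.9, Prop.~5.10]{bdp1}---is exactly what the paper does; its proof is simply the citation ``see Theorem~5.9, Proposition~5.10, and equation (5.2.4) of \cite{bdp1}, noting that $\beta_p=0$ here, since $f$ has level divisible by $p$.'' That one observation ($\beta_p=0$, equivalently the Hecke polynomial of $U_p$ is linear for a $p$-new form) is what replaces BDP's quadratic $p$-depletion $(1-\alpha_pV)(1-\beta_pV)f$ by the linear one $f^{[p]}=f-a_p(f)Vf$ used here, and it immediately collapses BDP's Euler factor to $(1-a_p(f)\chi^{-1}(\bar\pp))^2$. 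Your first two paragraphs reproduce this correctly: the continuity comes from Serre--Tate/Katz as you say, and the Euler factor appears when the geometric action of $V$ on ordinary CM triples is pushed through the class-group sum.

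Your third paragraph, however, misplaces the source of that Euler factor. You write that one needs local test vectors at $p$ ``so that the local Rankin--Selberg zeta integral at $p$ produces precisely the factor $(1-a_p(f)\chi^{-1}(\overline{\pp}))$.'' That factor does not come from the archimedean/complex-analytic Waldspurger computation of \cite[Thm.~4.6]{bdp1} at all; it comes entirely from the $p$-depletion on the $p$-adic side, as your second paragraph already (correctly) asserts. The Waldspurger-type formula is a purely complex-analytic identity relating $L(f,\chi^{-1},0)$ to a Petersson pairing and is insensitive to the present semistable non-crystalline situation beyond the obvious relabelling $N\rightsquigarrow Np$ (which is absorbed into the constants $w(f,\chi)$ and $C(f,\chi,c)$). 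If you actually tried to extract $(1-a_p(f)\chi^{-1}(\overline{\pp}))$ from a local zeta integral for a Steinberg local component you would not find it, and the proof would stall. So there is no extra local-representation-theoretic obstacle to resolve here; the entire change relative to BDP is already contained in the single fact $\beta_p=0$. One further small misstep: the statement is for general weight $k=r+2$, and for $k>2$ a $p$-new form has $a_p(f)^2=p^{k-2}$ and so is \emph{not} $p$-ordinary; your appeal to $p$-ordinarity to justify the extension is therefore not the right justification (the $p$-depletion construction does not require ordinarity).
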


\begin{proof}
See Theorem~5.9, Proposition~5.10, and equation $(5.2.4)$ of \cite{bdp1},
noting that $\beta_p=0$ here, since $f$ has level divisible by $p$.
\end{proof}

Let $\Sigma_{k,c}^-$ be the set of
algebraic Hecke characters of $K$ of conductor $c$
and infinity type $(k-1-j,1+j)$, with $j\geq 0$.
Even though $\Sigma_{k,c}^+\cap\Sigma_{k,c}^-=\emptyset$,
any character in $\Sigma_{k,c}^-$ can be written as a limit of characters in $\Sigma_{k,c}^+$ (see \cite[p.1137]{bdp1}).
Thus for any $\chi\in\Sigma_{k,c}^-$, the value $L_\pp(f)(\chi)$ is defined by continuity.
\sk

The next result extends the $p$-adic Gross--Zagier formula of \cite[Thm.~5.13]{bdp1}
to the semistable non-crystalline setting.

\begin{thm}\label{thmbdp1A}
Let $f=q+\sum_{n=2}^{\infty}a_n(f)q^n\in S_{k}(\Gamma_0(Np))$ be a $p$-new eigenform
of weight $k=r+2\geq 2$, %Let $c\geq 1$ be an integer prime to $Np$,
and suppose that $\chi\in\Sigma_{k,c}^-$ has infinity type $(r+1-j,1+j)$,
with $0\leq j\leq r$. Then
\begin{equation}\label{formula:bdp1A}
\frac{L_\pp(f)(\chi)}{\Omega_p^{r-2j}}=
(1-a_p(f)\chi^{-1}(\bar{\pp}))\cdot
\biggl(\frac{c^{-j}}{j!}\sum_{[\fa]\in{\rm
Pic}(\cO_c)}\chi^{-1}(\fa){\rm N}(\fa)\cdot
{\rm AJ}_F(\Delta_{\varphi_\fa\varphi_0})
(\omega_{f}\wedge\omega_A^{j}\eta_A^{r-j})\biggr).\nonumber
\end{equation}
\end{thm}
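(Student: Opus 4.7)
The proof will follow the strategy of \cite[Thm.~5.13]{bdp1}, with the semistable nature of $V_f$ entering exclusively through the linear (rather than quadratic) Hecke polynomial satisfied by $f$ at $p$, reflected in the shape of the polynomial $P(T)$ appearing in the proof of Theorem~\ref{coleman-primitives}. The plan is to transform the right-hand side of the claimed formula, using Lemmas~\ref{lem:3.22} and \ref{3.24}, into a sum of values of the $p$-depleted modular form $d^{-1-j}f^{[p]}$ at the CM triples $\fa*(A_0,t_0,\omega_{\rm can})$, and then to identify this with $L_\pp(f)(\chi)/\Omega_p^{r-2j}$ via the continuous extension of Theorem~\ref{5.27-5.28} to characters in $\Sigma_{k,c}^-$.

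Starting from the sum on the right, Lemma~\ref{lem:3.22} rewrites each Abel--Jacobi term as $\deg(\varphi_\fa\varphi_0)^j\cdot\langle F_{f,\infty}(P_{A'_\fa}),\omega_{A'_\fa}^j\eta_{A'_\fa}^{r-j}\rangle_{A'_\fa}$, and Lemma~\ref{3.24} identifies this inner pairing with $G_j(A'_\fa,t_{A'_\fa},\omega_{A'_\fa})$ for the locally analytic $p$-adic modular form $G_j$ of weight $r-2j$ constructed there. Writing $\Sigma_G:=\sum_{[\fa]}\chi^{-1}(\fa){\rm N}(\fa)\cdot G_j(A'_\fa,t_{A'_\fa},\omega_{A'_\fa})$, the recursion $(\ref{katz})$ yields
\[
\Sigma_G-\frac{a_p(f)}{p^{r-j+1}}\sum_{[\fa]}\chi^{-1}(\fa){\rm N}(\fa)\cdot G_j(\pp*(A'_\fa,t_{A'_\fa},\omega_{A'_\fa}))=j!\cdot\sum_{[\fa]}\chi^{-1}(\fa){\rm N}(\fa)\cdot d^{-1-j}f^{[p]}(A'_\fa,t_{A'_\fa},\omega_{A'_\fa}).
\]

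The key step is then a change of variable $\fa\mapsto\bar\pp\fa$ in the second sum on the left, legal because $\pp\bar\pp=p\cO_K$ is principal so that the summation range over ${\rm Pic}(\cO_c)$ is unchanged. Using that $\pp*(A'_\fa,t_{A'_\fa},\omega_{A'_\fa})$ corresponds to the CM triple attached to $\bar\pp\fa$ up to a controlled rescaling of the differential, together with $\chi$ having infinity type $(r+1-j,1+j)$, this identifies the second sum with $\chi^{-1}(\bar\pp)\cdot p^{r-j+1}\cdot\Sigma_G$, so the identity collapses to
\[
(1-a_p(f)\chi^{-1}(\bar\pp))\cdot\Sigma_G=j!\cdot\sum_{[\fa]}\chi^{-1}(\fa){\rm N}(\fa)\cdot d^{-1-j}f^{[p]}(A'_\fa,t_{A'_\fa},\omega_{A'_\fa}),
\]
producing the Euler factor stated in the theorem. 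To finish, I would use $\omega_0=\Omega_p\cdot\omega_{\rm can}$ together with the $-(r-2j)$-homogeneity of $d^{-1-j}f^{[p]}$ to convert the arguments on the right from $\omega_{A'_\fa}$ to the canonical differential on $A'_\fa$, picking up the factor $\Omega_p^{-(r-2j)}$; the factor $\deg(\varphi_\fa\varphi_0)^j=c^j{\rm N}(\fa)^j$ pulled out by Lemma~\ref{lem:3.22} is then absorbed by the $c^{-j}/j!$ in the statement, and the resulting sum is exactly the continuous extension of $(\ref{def:Lp})$ to the character $\chi\in\Sigma_{k,c}^-$.

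The main technical obstacle lies in the change-of-variable step: matching the powers of ${\rm N}(\fa)$, $p$, $c$, and $\Omega_p$ requires careful tracking of the normalization of $\omega_{A'_\fa}$ under $\pp$-multiplication, governed by the formal group uniformization $\imath:\hat{\mathcal{A}}_0\to\hat{\mathbb{G}}_m$ fixed above. Apart from this book-keeping, the argument is a faithful transcription of the crystalline proof in \cite[\S5.2]{bdp1}.
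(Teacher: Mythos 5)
Your proposal is correct and follows essentially the same route as the paper's proof, differing only in direction: you start from the Abel--Jacobi sum and manipulate toward $L_\pp(f)(\chi)$, while the paper starts from the continuous extension of $(\ref{def:Lp})$ to $\Sigma_{k,c}^-$ (citing \cite[Prop.~5.10]{bdp1}), substitutes the recursion $(\ref{katz})$, performs the change of variable to extract the Euler factor $(1-a_p(f)\chi^{-1}(\bar\pp))$, and then converts to the Abel--Jacobi sum via Lemma~\ref{lem:3.22} and Lemma~\ref{3.24}. One minor slip: in your definition of $\Sigma_G$ the exponent on $\mathrm{N}(\fa)$ should be $1+j$, not $1$ — only the $c^j$ from $\deg(\varphi_\fa\varphi_0)^j=c^j\mathrm{N}(\fa)^j$ is cancelled by the $c^{-j}$ in the statement, while the $\mathrm{N}(\fa)^j$ combines with the $\mathrm{N}(\fa)$ already present to give $\mathrm{N}(\fa)^{1+j}$; with that correction the powers of $p$ in your change-of-variable step come out to exactly $\chi(\pp)p^{-1-j}=\chi^{-1}(\bar\pp)p^{r+1-j}$ as in the paper.
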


\begin{proof}
The proof of \cite[Prop.~5.10]{bdp1} shows that the expression $(\ref{def:Lp})$
extends in the obvious way to a character
$\chi$ as in the statement, yielding
\begin{equation}\label{takelim}
\frac{L_\pp(f)(\chi)}{\Omega_p^{r-2j}}=
\sum_{[\mathfrak{a}]\in{\rm
Pic}(\cO_c)}\chi^{-1}(\fa){\rm N}(\fa)^{1+j}
\cdot d^{-1-j}f^{[p]}(\mathfrak{a}*(A_0,t_0,\omega_0)).
\end{equation}
On the other hand, by Lemma~\ref{3.24} we have
\begin{equation}\label{from3.24}
j! d^{-1-j}f^{[p]}(\mathfrak{a}*(A_0,t_0,\omega_0))=G_j(\fa*(A_0,t_0,\omega_0))-\frac{a_p(f)}{p^{r-j+1}}G_j(\pp\fa*(A_0,t_0,\omega_0)).
\end{equation}
Substituting $(\ref{from3.24})$ into $(\ref{takelim})$, summing over $[\fa]\in{\rm Pic}(\cO_c)$,
and noting that
\[
\chi(\pp)p^{-1-j}=\chi^{-1}(\overline\pp)p^{r+1-j},
\]
we see that
\begin{equation}\label{G}
\frac{L_\pp(f)(\chi)}{\Omega_p^{r-2j}}=\left(1-a_p(f)\chi^{-1}(\overline\pp)\right)
\cdot\biggl(\frac{1}{j!}
\sum_{[\fa]\in{\rm Pic}(\cO_c)}\chi^{-1}(\fa){\rm N}(\fa)^{1+j}\cdot G_j(\mathfrak{a}*(A_0,t_0,\omega_0))\biggr).
\end{equation}
Finally, since the isogeny
$\varphi_\fa\varphi_0:(A,t_A,\omega_A)\longrightarrow\fa*(A_0,t_0,\omega_0)$
has degree $c{\rm N}(\fa)$, combining Lemma~\ref{lem:3.22} and Lemma~\ref{3.24} we have
\begin{equation}\label{modifAJ}
G_j(\fa*(A_0,t_0,\omega_0))=c^{-j}{\rm N}(\fa)^{-j}\cdot
{\rm AJ}_F(\Delta_{\varphi_\fa\varphi_0})(\omega_f\wedge\omega_A^j\eta_A^{r-j}),
\end{equation}
and substituting $(\ref{modifAJ})$ into $(\ref{G})$, the result follows.
\end{proof}

\section{Main result}

In this section we prove the main result of this paper, %(Theorem~\ref{main}),
giving an ``exceptional zero formula'' for the specializations
of Howard's big Heegner points at exceptional primes in the Hida family.

\subsection{Heegner points in Hida families}
\label{subsec:bigHP}

We begin by briefly reviewing the constructions of \cite{howard-invmath},
which we adapt to our situation, referring the reader to [\emph{loc.cit.}, \S{2}] for further details.
\sk

Recall that $p\nmid N$. Let $f=\sum_{n=1}^\infty a_n(f)q^n\in S_k(\Gamma_0(Np))$
be a newform, fix a finite extension $L$ of $\bQ_p$ with ring of integers $\cO_L$
containing the Fourier coefficients of $f$, and let
\[
\rho_f:G_\bQ:={\rm Gal}(\overline{\bQ}/\bQ)\longrightarrow{\rm Aut}_L(V_f)\simeq\mathbf{GL}_2(L)
\]
be the Galois representation associated to $f$. Also, let $K=\bQ(\sqrt{-D_K})$
be an imaginary quadratic field as in $\S\ref{subsubsec:A}$.
For the rest of this paper, these will be subject to the following further hypotheses.

\begin{ass}\label{running-ass}
\begin{enumerate}
\item{} $f$ is ordinary at $p$, i.e., $\imath_p(a_p(f))$ is a $p$-adic unit;
\item{} $\overline{\rho}_f$ is absolutely irreducible;
\item{} $\overline{\rho}_f$ is ramified at every prime $q$ dividing $(D_K,N)$;
\item{} $p\nmid h_K:=\vert{\rm Pic}(\cO_K)\vert$, the class number of $K$.
\end{enumerate}
\end{ass}

Note that by \cite[Lemma~2.15]{howard-invmath}, the first assumption forces the weight of $f$
to be $k=2$, which will thus be assumed for the rest of this paper. %Then $a_p(f)=\pm{1}$.

\begin{defn}
Set $\Lambda_{\cO_L}:=\cO_L\pwseries{1+p\bZ_p}$.
For any $\Lambda_{\cO_L}$-algebra $A$, let $\mathcal{X}_{\cO_L}^a(A)$ be
the set of continuous $\cO_L$-algebra homomorphisms
$\nu:A\longrightarrow\overline{\bQ}_p$ such that the composition
\[
1+p\bZ_p\longrightarrow\Lambda_{\cO_L}^\times\longrightarrow A^\times
\xrightarrow{\;\;\nu\;\;}\overline{\bQ}_p^\times
\]
is given by $\gamma\mapsto\gamma^{k_\nu-2}$, for some integer $k_\nu\geq 2$ with $k_\nu\equiv 2\pmod{2(p-1)}$
called the \emph{weight} of $\nu$.
\end{defn}

Since $f$ is ordinary at $p$, by \cite[Cor.~1.3]{hida86b})
there exists a local reduced finite integral extension
$\cR$ of $\Lambda_{\cO_L}$, and a formal $q$-expansion
$\sF=\sum_{n=1}^\infty\mathbf{a}_nq^n\in\cR[[q]]$ uniquely characterized by the following property.
For every $\nu\in\mathcal{X}^a_{\cO_L}(\cR)$ of weight $k_\nu>2$, there exists a newform
$f_\nu\in S_{k_\nu}(\Gamma_0(N))$ such that
\begin{equation}\label{p-stab}
\nu(\sF)=f_\nu(q)-\frac{p^{k_\nu-1}}{\nu(\mathbf{a}_p)}f_\nu(q^p),
\end{equation}
and we there exists a unique $\nu_f\in\mathcal{X}^a_{\cO_L}(\cR)$
of weight $2$ such that ${\nu_f}(\sF)=f(q)$.
\sk

%\begin{rem}
%Note that for $k_\nu>2$, $\nu(\sF)$ is the $q$-expansion of a $p$-ordinary
%(with $U_p$-eigenvalue $\nu(\mathbf{a}_p)$) eigenform of level $Np$ which is old at $p$,
%whereas $\nu_f(q)$ is obviously new at $p$. In general, one says that %for $\nu\in\mathcal{X}_{\rm arith}^o(\cR)$
%$\nu(\sF)$ is (the $q$-expansion of) an \emph{ordinary $p$-stabilized newform} of tame level $N$.
%\end{rem}

By \cite[Thm.~1.2]{hida86b}, there is
a free $\cR$-module $\mathbf{T}$ of rank $2$ equipped with
a continuous action
\[
\rho_\sF:G_\bQ\longrightarrow
{\rm Aut}_\cR(\mathbf{T})\cong\mathbf{GL}_2(\cR)
\]
such that for every $\nu\in\mathcal{X}_{\cO_L}^a$,
$\nu(\rho_\sF)$ is isomorphic to the Galois representation
$\rho_{f_\nu}:G_\bQ\longrightarrow\mathbf{GL}_2(\overline{\bQ}_p)$
associated to $f_\nu$. Moreover, by \cite[Thm.~2.2.2]{wiles88},
if $D_p\subset G_{\bQ}$ is the decomposition group of any place $\mathfrak{P}$
of $\overline{\bQ}$ above $p$, %(which will be assumed to correspond to our fixed embedding $\imath_p$),
there exists an exact sequence of $\cR[D_p]$-modules
\begin{equation}\label{eq:ord}
0\longrightarrow\fil^+\mathbf{T}\longrightarrow\mathbf{T}\longrightarrow\fil^-\mathbf{T}\longrightarrow 0
\end{equation}
with $\fil^{\pm}\mathbf{T}$ free of rank $1$ over $\cR$, and with the $D_p$-action on $\fil^-\mathbf{T}$
given by the unramified character sending an arithmetic Frobenius ${\rm Fr}_p^{-1}$ to $\mathbf{a}_p\in\cR^\times$.
\sk

Following \cite[Def.~2.1.3]{howard-invmath}, define the \emph{critical character}
$\Theta:G_\bQ\longrightarrow\cR^\times$ by the composite
\begin{equation}\label{def:crit}
\Theta:G_\bQ\xrightarrow{\varepsilon_{\rm cyc}}\bZ_p^\times\xrightarrow{\;\langle\cdot\rangle\;}
1+p\bZ_p\xrightarrow{\gamma\mapsto\gamma^{1/2}}1+p\bZ_p
\longrightarrow\Lambda_\cO^\times\longrightarrow\cR^\times,
\end{equation}
where $\varepsilon_{\rm cyc}$ is the $p$-adic cyclotomic character,
$\langle\cdot\rangle$ denotes the projection to the $1$-units in $\bZ_p$.
%For any $\nu\in\mathcal{X}_{\rm arith}(\cR)$,
%we will denote by $\Theta_\nu:G_{\bQ}\rightarrow{\bQ}_p^\times$
%the composition of $\Theta$ with $\nu:\cR\rightarrow\overline{\bQ}_p$.
Let $\cR^\dagger$ be the free $\cR$-module of rank $1$
where $G_\bQ$ acts via $\Theta^{-1}$, and set
\[
\mathbf{T}^\dagger:=\mathbf{T}\otimes_{\cR}\cR^\dagger
\]
equipped with the diagonal Galois action.
Then, if for every $\nu\in\mathcal{X}_{\cO_L}^{a}(\cR)$
we let $V_{f_\nu}$ be a representation space for $\rho_{f_\nu}$,
then $\nu(\mathbf{T}^\dagger):=\mathbf{T}^\dagger\otimes_{\cR,\nu}\nu(\cR)$ is
isomorphic to a lattice in the self-dual Tate twist
$V_{f_\nu}(k_\nu/2)$ of $V_{f_\nu}$ (see %the discussion in
\cite[Thm.~1.4.3]{Ohta} and \cite[(3.2.4)]{Nekovar-Plater}). %, for example).
%; in particular, we thus have
%$\nu_f(\mathbb{T}^\dagger_{\nu_f}[1/p]\simeq V_f$.
\sk

Let $K_\infty$ be the anticyclotomic $\bZ_p$-extension of $K$, and for each $n\geq 0$,
let $K_n$ be the subfield of $K_\infty$ with ${\rm Gal}(K_n/K)\simeq\bZ/p^n\bZ$.

\begin{thm}[Howard]
\label{thm:bigHPs}
There is a system of ``big Heegner points''
\[
\mathfrak{Z}_\infty=\{\mathfrak{Z}_n\}_{n\geq 0}\in
H^1_{\rm Iw}(K_\infty,\mathbf{T}^\dagger)
:=\varprojlim_nH^1(K_n,\mathbf{T}^\dagger)
\]
with the following properties.
\begin{enumerate}
\item{} For each $n$, $\mathfrak{Z}_n$ belongs to the Greenberg Selmer group
${\rm Sel}_{\rm Gr}(K_n,\mathbf{T}^\dagger)$ of \cite[Def.~2.4.2]{howard-invmath}.
In particular, for every prime $\mathfrak{q}$ of $K$ above $p$, we have
\[
{\rm loc}_{\mathfrak{q}}(\mathfrak{Z}_\infty)\in{\rm ker}\left(H^1_{\rm Iw}(K_{\infty,\mathfrak{q}},\mathbf{T}^\dagger)
\longrightarrow H^1_{\rm Iw}(K_{\infty,\mathfrak{q}},\fil^-\mathbf{T}^\dagger)\right)
\]
for the natural map induced by $(\ref{eq:ord})$.
\item{} If $\mathfrak{Z}_\infty^*$ denotes the image of $\mathfrak{Z}_\infty$ under the action of complex
conjugation, then
\begin{equation}\label{eq:w}
\mathfrak{Z}_\infty^*=w\cdot\mathfrak{Z}_\infty\nonumber
\end{equation}
for some $w\in\{\pm{1}\}$.
\end{enumerate}
\end{thm}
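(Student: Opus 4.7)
The plan is to reproduce Howard's construction from \cite{howard-invmath}, adapting it to the current setup (where the starting newform $f$ already has level divisible by $p$, so one works with the Hida family $\sF$ through its weight-$2$ specialization $\nu_f$). First, I would consider, for each $s\geq 0$, the tower of modular curves of level $\Gamma_1(N)\cap\Gamma_0(p^{s+1})$ and its Jacobian. On each such curve one has a Heegner divisor $P_s$ coming from a CM elliptic curve $A_s$ with $\mathcal{O}_K$-CM together with a cyclic $\mathfrak{Np}^{s+1}$-isogeny, defined over the ring class field $H_{cp^s}$. Applying the Kummer map to the degree-zero class $(P_s)-(\infty)$, then Hida's ordinary projector $e_{\rm ord}$, produces cohomology classes with values in the ordinary part of the $p$-adic Tate module at finite level, which interpolate into a class with values in $\mathbf{T}$.

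Next comes the assembly into an Iwasawa-theoretic class. The key input is the norm-compatibility of Heegner points in the anticyclotomic $p$-tower: the standard distribution relations on $X_0(Np^{s+1})$ yield relations of the form ${\rm Cor}_{H_{cp^{s+1}}/H_{cp^s}}(P_{s+1}) = U_p \cdot P_s$ on the Jacobian (away from a coboundary), which after applying $e_{\rm ord}$ and dividing by the unit $\mathbf{a}_p\in\cR^\times$ becomes an actual equality of compatible classes. Twisting by the critical character $\Theta$ of $(\ref{def:crit})$ then converts the system from a collection over the full ring class tower $H_{cp^\infty}$ into an anticyclotomic Iwasawa class with values in $\mathbf{T}^\dagger$; the appearance of $\Theta^{-1}$ is precisely what makes the resulting module self-dual and kills the variation along the cyclotomic direction. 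Finally, pushing down via corestriction from $H_{cp^s}$ to $K_n$ (using $p\nmid h_K$ from Assumption~\ref{running-ass}(iv)) produces the system $\{\mathfrak{Z}_n\}$.

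For assertion (1), the local conditions away from $p$ are automatic from the fact that the classes are Kummer images of points on an abelian variety with good or semistable reduction, combined with the definition of the Greenberg local conditions at ramified primes as in \cite[Def.~2.4.2]{howard-invmath}. At primes above $p$ the essential input is that $p=\pp\overline{\pp}$ splits in $K$, so the CM abelian varieties have ordinary reduction and the canonical subgroup gives, via the construction in $\S\ref{subsubsec:A}$, a natural map into $\fil^+\mathbf{T}$; thus ${\rm loc}_\pp(\mathfrak{Z}_\infty)$ lies in the image of $H^1_{\rm Iw}(K_{\infty,\pp},\fil^+\mathbf{T}^\dagger)$ and a fortiori maps to zero in $H^1_{\rm Iw}(K_{\infty,\pp},\fil^-\mathbf{T}^\dagger)$. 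For assertion (2), one uses that complex conjugation $\tau$ acts on a Heegner point $(A,A[\mathfrak{Np}^{s+1}])$ by sending it to a point equivalent, up to an element of the Picard group and an Atkin--Lehner-type involution $w_{Np^{s+1}}$, to the original one; since $\sF$ is an eigenform for this involution with sign $w\in\{\pm1\}$ compatibly in the family, the functional equation $\mathfrak{Z}_\infty^*=w\cdot\mathfrak{Z}_\infty$ falls out.

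The main obstacle is the second paragraph: verifying that the norm relations at each finite level are \emph{exactly} compatible with the ordinary projector and the $\Theta$-twist, and not merely up to a universally bounded error. In practice this forces one to work with Ohta's big ordinary \'etale cohomology as the ambient module (rather than finite-level Tate modules followed by a limit), so that the Hecke and Galois structures are controlled by a single $\cR$-module throughout; this is the technical heart of \cite[\S2]{howard-invmath} and is what one ultimately invokes.
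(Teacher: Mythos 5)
Your proposal is essentially the same as the paper's proof: both defer to Howard's construction in \cite[\S\S{2.2}, 3.3]{howard-invmath} for the existence of $\mathfrak{Z}_\infty$ and cite his Prop.~2.4.5 for the Greenberg Selmer condition (1); the paper likewise does not reprove these but simply points to the relevant propositions. For item (2), your framing ("complex conjugation acts via a Picard twist and an Atkin--Lehner involution, and $\sF$ is an eigenform for the latter") is exactly the content of Howard's Prop.~2.3.5, which the paper invokes and then supplements with the short explicit computation needed to pass from the class $\mathfrak{X}_{p^{n+1}}\in H^1(H_{p^{n+1}},\mathbf{T}^\dagger)$ to $\mathfrak{Z}_n=U_p^{-n}{\rm Cor}_{H_{p^{n+1}}/K_n}(\mathfrak{X}_{p^{n+1}})$: one expands the corestriction as a sum over ${\rm Gal}(H_{p^{n+1}}/K_n)$, uses the dihedral relation $\tau\sigma\tau^{-1}=\sigma^{-1}$ to reindex, and applies Prop.~2.3.5 to extract the sign $w$. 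You leave this last reindexing implicit ("falls out"), but it is the only piece of actual argument in the paper's proof and is worth making explicit, since it is what turns the pointwise Atkin--Lehner sign into an identity for the corestricted Iwasawa class.
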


\begin{proof}
In the following, all the references are to \cite{howard-invmath}.
The construction of $\mathfrak{Z}_\infty$ is given in
\S\S{2.2}, 3.3 and the proof of $(1)$ is given in Prop.~2.4.5.
For the proof of $(2)$, we need to briefly recall the definition of $\mathfrak{Z}_n$.
Let $H_{p^{n+1}}$ be the ring class field of $K$ of conduction $p^{n+1}$, and note that it contains $K_n$.
By Prop.~2.3.1, the ``big Heegner points'' $\mathfrak{X}_{p^{n+1}}\in H^1(H_{p^{n+1}},\mathbf{T}^\dagger)$
safisfy ${\rm Cor}_{H_{p^{n+1}}/H_{p^n}}(\mathfrak{X}_{p^{n+1}})=U_p\cdot\mathfrak{X}_{p^n}$,
and hence the classes
\begin{equation}\label{def:Z}
\mathfrak{Z}_n:=U_p^{-n}\cdot{\rm Cor}_{H_{p^{n+1}}/K_n}(\mathfrak{X}_{p^{n+1}})
\end{equation}
are compatible under corestriction. Denoting by $\tau$ the image of a class under
the action of complex conjugation and using Prop.~2.3.5, we find that
\begin{align}\label{eq:dih}
{\rm Cor}_{H_{p^{n+1}}/K_n}(\mathfrak{X}_{p^{n+1}})^\tau
&=\sum_{\sigma\in{\rm Gal}(H_{p^{n+1}}/K_n)}\mathfrak{X}_{p^{n+1}}^{\tau\sigma}\\
&=\sum_{\sigma\in{\rm Gal}(H_{p^{n+1}}/K_n)}\mathfrak{X}_{p^{n+1}}^{\sigma^{-1}\tau}\nonumber\\
&=w\cdot{\rm Cor}_{H_{p^{n+1}}/K_n}(\mathfrak{X}_{p^{n+1}})\nonumber
\end{align}
for some $w\in\{\pm 1\}$. Combining $(\ref{def:Z})$ and $(\ref{eq:dih})$, the result follows.
\end{proof}

\subsection{Two-variable $p$-adic $L$-functions}
\label{sec:2varL}

%Let ${\rm rec}_\pp:\bQ_p^\times\cong K_\pp^\times\longrightarrow G_{K_\pp}^{\rm ab}$
%and ${\rm rec}_K:K^\times\backslash\mathbb{A}_K^\times\longrightarrow G_K^{\rm ab}$
%be the local-at-$\pp$ and global reciprocity maps of class field theory,
%both with the geometric normalization. If $\phi:D_\infty\longrightarrow\overline{\bQ}_p^\times$
%is a continuous character, define $\phi_\pp:\bZ_p^\times\longrightarrow\overline{\bQ}_p^\times$ by
%$\phi_\pp(z)=\phi({\rm rec}_\pp(z))$, and denote still by $\phi$
%the composition of $\phi$ with ${\rm rec}_K$.

As in the preceding section, let $f\in S_2(\Gamma_0(Np))$ be a newform split multiplicative at $p$,
and let $\mathbf{f}\in\cR\pwseries{q}$ be the Hida family passing through $f$.
Recall the spaces of characters $\Sigma_{k,c}^{\pm}$ and $\hat{\Sigma}_{k,c}$ introduced in Section~\ref{sec:computations}.
%and denote by $\hat{\Sigma}_c$ the $p$-adic closure of $\cup_{k\geq 2}\hat{\Sigma}_{k,c}$.
In the following, we only consider the case $c=1$,
which will henceforth be suppressed from the notation.
\sk

By \cite[Prop.~5.10]{bdp1} (see also Theorem~\ref{5.27-5.28}),
for every $\nu\in\mathcal{X}_{\cO_L}^a(\cR)$ %of weight $k_\nu\geq 2$ %(so that $f_\nu$ has level prime to $p$)
the assignment
\[
\chi\longmapsto L_\pp(f_\nu)(\chi):=\sum_{[\fa]\in{\rm Pic}(\cO_K)}\chi^{-1}(\fa){\rm N}(\fa)^{-j}
\cdot d^jf_\nu^{[p]}(\fa*(A_0,t_0,\omega_{\rm can}))
\]
extends to a continuous function on $\hat{\Sigma}_{k_\nu}$.
Using the explicit expression for these values,
it is easy to show the existence of a two-variable $p$-adic $L$-function interpolating
$L_\pp(f_\nu)$ for varying $\nu$. For the precise statement, denote by $h=h_K$ the class number of $K$
(which we assume is prime to $p$), and let $\phi_o$ %:K^\times\backslash\mathbb{A}_K^\times\rightarrow\bC^\times$
be the unramified Hecke character  %of infinity type $(uh,0)$
defined on fractional ideals by the rule
\begin{equation}\label{phi}
\phi_o(\mathfrak{a})=\alpha/\overline{\alpha},\quad\textrm{where $(\alpha)=\mathfrak{a}^h$}.
\end{equation}
Assume that $\cO_L$ contains the values of $\phi_o$,
and denote by $\langle\phi_o\rangle$ the composition of $\phi_o$
with the projection onto the $\bZ_p$-free quotient of $\cO_L^\times$,
which then is valued in $1+p\bZ_p$, and define $\xi:K^\times\backslash\mathbb{A}_K^\times\longrightarrow\cR^\times$ by
\begin{equation}\label{def:xi}
\xi:K^\times\backslash\mathbb{A}_K^\times\xrightarrow{\;\;\phi_o\;\;}\cO_L^\times\xrightarrow{\;\langle\cdot\rangle\;}
1+p\bZ_p\xrightarrow{\gamma\mapsto\gamma^{1/2h}}1+p\bZ_p
\longrightarrow\Lambda_{\cO_L}^\times\longrightarrow\cR^\times.
\end{equation}
Similarly, recall the critical character $\Theta:G_{\bQ}\longrightarrow\cR^\times$ from $(\ref{def:crit})$,
and define $\chi:K^\times\backslash\mathbb{A}_K^\times\longrightarrow\cR^\times$ by
\[
\chi(x)=\Theta({\rm rec}_\bQ({\rm N}_{K/\bQ}(x))),
\]
where ${\rm rec}_{\bQ}:\mathbb{A}_\bQ^\times\longrightarrow{\rm Gal}(\bQ^{\rm ab}/\bQ)$ is
the \emph{geometrically} normalized global reciprocity map.
Let $\Gamma_\infty:={\rm Gal}(K_\infty/K)$ be the Galois
group of the anticyclotomic $\bZ_p$-extension of $K$,
and denote by $\mathcal{X}_{\cO_L}^a(\Gamma_\infty)$
the set of continuous $\cO_L$-algebra homomorphisms $\cO_L\pwseries{\Gamma_\infty}\longrightarrow\bQ_p^\times$
induced by a character $\phi$ of the form $\phi=\phi_o^{\ell_\phi}$ for some integer
$\ell_\phi\geq 0$ with $\ell_\phi\equiv 0\pmod{p-1}$ called the \emph{weight} of $\phi$.
Finally, let
\[
{\rm\mathbf{N}}_K:K^\times\backslash\mathbb{A}_K^\times\xrightarrow{{\rm N}_{K/\bQ}}\bQ^\times\backslash\mathbb{A}_\bQ^\times
\xrightarrow{\;\;\rm\mathbf{N}\;\;}\bC^\times
\]
be the norm character of $K$, and for every $\nu\in\mathcal{X}_{\cO_L}^a(\cR)$,
let $\xi_\nu$ and $\chi_\nu$ %:K^\times\backslash\mathbb{A}_K^\times\longrightarrow\overline{\bQ}_p^\times$
be the composition of $\xi$ and $\chi$ with $\nu$, respectively.

\begin{thm}\label{prop:bigL}
The exists a continuous function $L_{\pp,\xi}(\mathbf{f})$
on $\mathcal{X}_{\cO_L}^a(\cR)\times\mathcal{X}_{\cO_L}^a(\Gamma_\infty)$
such that for every $\nu\in\mathcal{X}_{\cO_L}^a(\cR)$ we have
\[
L_{\pp,\xi}(\sF)(\nu,\phi)=L_\pp(f_\nu)(\phi\xi_\nu\chi_\nu{\rm\mathbf{N}}_K)
\]
as functions of $\phi\in\mathcal{X}^a_{\cO_L}(\Gamma_\infty)$.
%In particular, for every $\nu\in\mathcal{X}_{\cO}^a(\cR)$ of weight $k_\nu\geq 2$ and every
%character $\chi\in\Sigma_{k_\nu}^+$ of infinity type $(k_\nu+j,-j)$ with $j\geq 0$,
%we have
%\[
%\frac{L_{p,\xi}(\mathbf{f})(\nu,\chi)^2}{\Omega_p^{2(k_\nu+2j)}}=
%(1-\xi_\nu^{-1}\chi^{-1}(\bar{\pp})\alpha_\nu)^2\cdot
%(1-\xi_\nu^{-1}\chi^{-1}(\bar{\pp})\beta_\nu)^2\cdot
%L_{\rm alg}(f_\nu,(\xi_\nu\chi)^{-1},0),
%\]
%where $\alpha_\nu:=\nu(\mathbf{a}_p)$, $\beta_\nu:=p^{k_\nu-1}/\alpha_\nu$,
%and $\Omega_p$ and $L_{\rm alg}(f_\nu,\chi^{-1},0)$
%are as in Theorem~\ref{5.27-5.28}.
\end{thm}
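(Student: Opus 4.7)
The plan is to construct $L_{\pp,\xi}(\sF)$ as an explicit sum of continuous functions on $\mathcal{X}_{\cO_L}^a(\cR) \times \mathcal{X}_{\cO_L}^a(\Gamma_\infty)$, mirroring the one-variable definition $(\ref{def:Lp})$ but with the Hida family $\sF$ in place of a single eigenform. The stated specialization at each $\nu$ will then be a consequence of Theorem~\ref{5.27-5.28} combined with a density argument.

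First, I would form the $\Lambda$-adic $p$-depletion
\[
\sF^{[p]} := \sum_{(n,p)=1} \mathbf{a}_n q^n \in \cR\pwseries{q},
\]
whose specialization at any $\nu \in \mathcal{X}_{\cO_L}^a(\cR)$ is the $p$-depletion $f_\nu^{[p]}$ appearing in $(\ref{def:Lp})$. By the ordinarity of $\sF$ and Hida's theory, $\sF^{[p]}$ defines a $\Lambda$-adic $p$-adic modular form of tame level $N$; in particular, it admits a canonical evaluation $\sF^{[p]}(\fa*(A_0,t_0,\omega_{\rm can})) \in \cR$ at each ordinary CM triple $\fa*(A_0,t_0,\omega_{\rm can})$ for $[\fa] \in {\rm Pic}(\cO_K)$, whose specialization at $\nu$ recovers $f_\nu^{[p]}(\fa*(A_0,t_0,\omega_{\rm can}))$.

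Next, I would $p$-adically interpolate the iterated Atkin--Serre operator $d^j$ in the exponent $j$. Because $\sF^{[p]}$ has vanishing $q$-expansion coefficients at multiples of $p$, the family $d^j\sF^{[p]}$ extends $p$-adically continuously to arbitrary $p$-adic exponents, as in the one-variable construction in the proof of \cite[Prop.~5.10]{bdp1}. Combined with evaluation at ordinary CM triples, this yields, for each $[\fa]$, a continuous function
\[
(\nu,\phi) \longmapsto d^{j(\nu,\phi)} \sF^{[p]}(\fa*(A_0,t_0,\omega_{\rm can})),
\]
where $j(\nu,\phi)$ is the unique $\bZ_p$-valued continuous function such that $\phi\xi_\nu\chi_\nu\mathbf{N}_K$ has infinity type $(k_\nu+j(\nu,\phi),-j(\nu,\phi))$; an unraveling of the definitions $(\ref{def:crit})$ and $(\ref{def:xi})$ shows that $j(\nu,\phi)$ takes non-negative integer values on a dense set in $\mathcal{X}_{\cO_L}^a(\cR) \times \mathcal{X}_{\cO_L}^a(\Gamma_\infty)$. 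Setting
\[
L_{\pp,\xi}(\sF)(\nu,\phi) := \sum_{[\fa] \in {\rm Pic}(\cO_K)} (\phi\xi_\nu\chi_\nu\mathbf{N}_K)^{-1}(\fa)\,\mathbf{N}(\fa)^{-j(\nu,\phi)}\,d^{j(\nu,\phi)}\sF^{[p]}(\fa*(A_0,t_0,\omega_{\rm can}))
\]
produces a continuous function whose value at any $(\nu,\phi)$ with $j(\nu,\phi) \geq 0$ agrees with the interpolation formula $(\ref{def:Lp})$ of $L_\pp(f_\nu)$ at the character $\phi\xi_\nu\chi_\nu\mathbf{N}_K$. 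The equality with $L_\pp(f_\nu)(\phi\xi_\nu\chi_\nu\mathbf{N}_K)$ on the remaining part of the parameter space follows by density together with the continuity of $L_\pp(f_\nu)$ on $\hat\Sigma_{k_\nu}$ granted by Theorem~\ref{5.27-5.28}.

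The principal technical issue is the joint continuity in $(\nu,\phi)$: interpolating $d^{j}$ as $j$ varies is straightforward in the $\phi$-direction because of the $p$-depletion, but one must verify that this interpolation is simultaneously continuous in the weight variable $\nu$ of the Hida family. Once the weight character $\xi$ has been defined as in $(\ref{def:xi})$ so as to align the two variables, this reduces to the fact that evaluation of $\Lambda$-adic modular forms at ordinary CM triples, composed with the interpolated Atkin--Serre operator, defines a continuous map on the two-variable character space, which is a direct application of Hida's theory.
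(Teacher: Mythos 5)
The paper's proof of this theorem is purely a citation to \cite[Thm.~1.4]{cas-2var}, together with the bookkeeping observation that $\phi\xi_\nu\chi_\nu\mathbf{N}_K$ has infinity type $(k_\nu+\ell_\phi-1,1-\ell_\phi)$ and hence lies in the domain $\hat\Sigma_{k_\nu}$ of $L_\pp(f_\nu)$. Your proposal instead unpacks the construction that the cited result carries out, and the shape of what you describe is the correct one: form the $\Lambda$-adic $p$-depletion $\sF^{[p]}=\sum_{(n,p)=1}\mathbf{a}_nq^n$ (and indeed $\nu(\sF^{[p]})=\nu(\sF)^{[p]}$ coincides, as a $p$-adic form, with the $p$-depletion $f_\nu^{[p]}$ appearing in $(\ref{def:Lp})$); evaluate at the ordinary CM triples $\fa*(A_0,t_0,\omega_{\rm can})$; and interpolate the Atkin--Serre power $d^j$ $p$-adically in the exponent, which is legitimate precisely because of the $p$-depletion. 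The exponent $j$ matching the infinity type is $j=\ell_\phi-1$, which depends only on $\phi$ and is a non-negative integer on the dense set $\ell_\phi\geq 1$; the extension across $\phi=\mathds{1}$ (where $j=-1$, so the character lands in $\Sigma^-_{k_\nu}$) then follows by density in $\hat\Sigma_{k_\nu}$, using Theorem~\ref{5.27-5.28}, exactly as you say. The one place where you should be more precise is the phrase ``$\Lambda$-adic $p$-adic modular form of tame level $N$'': $\sF^{[p]}$ is no longer $U_p$-ordinary, so it does not live in Hida's ordinary module; what makes the $\cR$-valued CM evaluation well-defined is rather that $\sF^{[p]}$ is a measure (equivalently, an $\cR$-valued $q$-expansion) on the ordinary Igusa tower, and that the CM points $\fa*(A_0,t_0)$ are ordinary. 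With that clarification, the joint continuity you flag as the key technical issue is indeed the content of the cited theorem, and your outline is consistent with it.
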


\begin{proof}
See \cite[Thm.~1.4]{cas-2var}. (Note that if
$(\nu,\phi)\in\mathcal{X}_{\cO_L}^a(\cR)\times\mathcal{X}_{\cO_L}^a(\Gamma_\infty)$,
then $\phi\xi_\nu\chi_\nu{\rm\mathbf{N}}_K$ is an unramified Hecke character of infinity type
$(k_\nu+\ell_\phi-1,1-\ell_\phi)$, thus lying in the domain of $L_\pp(f_\nu)$.)
\end{proof}

\subsection{Big logarithm maps}

By our assumption that $p\nmid h_K$, the extension $K_\infty/K$ is
totally ramified at every prime $\qq$ above $p$; let $K_{\infty,\qq}$ be the completion of $K_\infty$ at
the unique prime above $\qq$, and set $\Gamma_{\qq,\infty}:={\rm Gal}(K_{\infty,\qq}/K_\qq)$.
Even though $\Gamma_{\qq,\infty}$ may be identified with $\Gamma_\infty$, in the following it will
be convenient to maintain the distinction between them.
\sk

Recall the $\cR$-adic Hecke character introduced in $(\ref{def:xi})$, and %by abuse of notation
let $\xi:G_K\longrightarrow\cR^\times$ also denote the Galois character defined by
\[
\xi(\sigma):=[\langle\hat{\phi}_o(\sigma)\rangle^{1/2h}],
\]
where $\hat\phi_o:G_K\longrightarrow\cO_L^\times$ is the $p$-adic avatar of the Hecke character
$\phi_o$ in $(\ref{phi})$. Finally, set
\[
\mathbb{T}:=\mathbf{T}^\dagger\vert_{G_K}\otimes\xi^{-1}, %\varepsilon_{\rm cyc}^{-1},
\]
and for every $\nu\in\mathcal{X}_{\cO_L}^a(\cR)$ denote by $V_\nu$
the specialization of $\mathbb{T}$ at $\nu$.

\begin{thm}\label{cor:L-}
Let $\qq\in\{\pp,\overline{\pp}\}$, define
$\lambda_\pm:=\mathbf{a}_p\cdot\Theta\xi^{\pm}({\rm Fr}_\qq)-1\in\cR$ and
set $\tilde{\cR}=\cR[\lambda_+^{-1}\lambda_-^{-1}]\otimes_{\bZ_p}\hat{\bZ}_p^{\rm nr}$. 
There exists an $\cR\pwseries{\Gamma_{\qq,\infty}}$-linear map
\[
\mathcal{L}_{\fil^+\mathbb{T}}^{\omega}:H^1_{\rm Iw}(K_{\infty,\qq},\fil^+\bT)
\longrightarrow\tilde{\cR}\pwseries{\Gamma_{\qq,\infty}}
\]
such for every $\mathfrak{Y}_\infty\in H^1_{\rm Iw}(K_{\infty,\mathfrak{q}},\fil^+\bT)$ and every
$(\nu,\phi)\in\mathcal{X}^a_{\cO_L}(\tilde{\cR})\times\mathcal{X}_{\cO_L}^a(\Gamma_{\infty})$, we have
\begin{align*}
\left(1-\frac{\Theta_\nu^{-1}\xi_\nu\phi^{-1}({\rm Fr}_{\mathfrak{q}})}{\nu(\mathbf{a}_p)}\right)
\mathcal{L}_{\fil^+\mathbb{T}}^{\omega}(\mathfrak{Y}_\infty)(\nu,\phi_\qq)
&=\ell_\phi!^{-1}\left(1-\frac{\nu(\mathbf{a}_p)p^{-1}}{\Theta_\nu^{-1}\xi_\nu^{}\phi^{-1}({\rm Fr}_{\qq})}\right)
\langle{\rm log}_{}(\nu(\mathfrak{Y}_\infty)^{\phi^{}}),\breve{\omega}_{\nu}\rangle_{},
%\left\{
%\begin{array}{ll}
%\frac{(-1)^{\ell_\phi}}{(-\ell_\phi)!}\cdot\langle{\rm log}_{}(\nu(\mathfrak{Y}_\infty)^{\phi^{-1}}),\breve{\omega}_{\nu}\rangle_{}
%&\textrm{if $\ell_\phi\leq 0$,}\\
%(\ell_\phi-1)!\cdot\langle {\rm exp}_{}^*(\nu(\mathfrak{Y}_\infty)^{\phi^{-1}}),\breve{\omega}_{\nu}\rangle_{}
%&\textrm{if $\ell_\phi> 0$,}
%\end{array}
%\right.
\end{align*}
where $\log=\log_{\fil^+V_\nu\otimes\phi}:H^1(K_\qq,\fil^+V_\nu\otimes\phi)\longrightarrow D_{\rm dR}(\fil^+V_\nu\otimes\phi)$ 
is the Bloch--Kato logarithm map, and $\nu(\mathfrak{Y}_\infty)^{\phi^{}}\in H^1(K_{\qq},\fil^+V_\nu\otimes\phi^{})$
is the $\phi^{}$-specialization of $\nu(\mathfrak{Y}_\infty)$.
%image of $\nu(\mathfrak{Y})$ under the composition
%\[
%H_{\rm Iw}^1(K_{\infty,\pp},\fil^+V_{\nu})\xrightarrow{\otimes(\varsigma_s)^{\otimes{-\ell}}}
%H_{\rm Iw}^1(K_{\infty,\pp},\fil^+V_{\nu}\otimes\phi_\pp^{-\ell})\longrightarrow
%H^1(K_\pp,\fil^+V_\nu\otimes\phi_\pp^{-\ell}),
%\]
%and ${\rm log}$ and ${\rm exp}^*$ are the Bloch--Kato logarithm and dual exponential maps, respectively.
\end{thm}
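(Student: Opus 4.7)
My plan is to construct $\mathcal{L}_{\fil^+\bT}^{\omega}$ by applying the Perrin--Riou/Coleman big logarithm machinery to the rank-one local Galois module $\fil^+\bT|_{G_{K_{\qq}}}$, with normalization dictated by the basis $\omega$ of its Dieudonn\'e module coming from the Hida family, and then to read off the interpolation formula from the defining property of this map. As the label of the statement suggests, this should essentially be a corollary of the construction in \cite{cas-2var} after an appropriate local restriction and reorganization.

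The first step is to make the local action of $G_{K_\qq}$ on $\fil^+\bT$ completely explicit. Starting from the ordinary filtration $(\ref{eq:ord})$ on $\mathbf{T}$, twisting by $\cR^\dagger$, restricting to $G_K$, and twisting by $\xi^{-1}$, one sees that $\fil^-\bT$ is unramified at $\qq$ with arithmetic Frobenius acting by an explicit unit of $\cR$ built from $\mathbf{a}_p$, $\Theta^{-1}$, and $\xi^{-1}$; together with the determinant of $\rho_\sF$, this pins down the character through which $G_{K_\qq}$ acts on $\fil^+\bT$. In particular, the quantities $\mathbf{a}_p\cdot\Theta\xi^{\pm}({\rm Fr}_\qq)-1$ are precisely the two Frobenius Euler factors of $\fil^+\bT$ and its Tate dual, which is exactly why $\lambda_+\lambda_-$ must be inverted before the big logarithm can be defined over $\cR\pwseries{\Gamma_{\qq,\infty}}$.

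Second, over the local field $K_\qq\simeq\bQ_p$ with $K_{\infty,\qq}/K_\qq$ totally ramified of Galois group $\Gamma_{\qq,\infty}\simeq\bZ_p$ (using the assumption $p\nmid h_K$), I would apply the Perrin--Riou big exponential twisted along $\fil^+\bT$, in the Hida-theoretic refinement developed in \cite{cas-2var} (itself building on work of Ochiai). Concretely, starting from $\mathfrak{Y}_\infty\in H^1_{\rm Iw}(K_{\infty,\qq},\fil^+\bT)$, one pairs the associated system of local classes against $\omega$ via Coleman power series to produce an element of $\tilde{\cR}\pwseries{\Gamma_{\qq,\infty}}$, after tensoring with $\hat{\bZ}_p^{\rm nr}$ to trivialize the relevant period rings and inverting $\lambda_+\lambda_-$ to clear the denominators coming from the Perrin--Riou normalization at the two Frobenius eigenvalues.

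The interpolation formula should then follow from Perrin--Riou's reciprocity law: evaluated at $(\nu,\phi_\qq)$ with $\phi$ of weight $\ell_\phi$, the big log computes $\ell_\phi!^{-1}$ times the Bloch--Kato logarithm of $\nu(\mathfrak{Y}_\infty)^\phi$ paired with $\breve{\omega}_\nu$, multiplied by the Euler factor $1-\nu(\mathbf{a}_p)p^{-1}/(\Theta_\nu^{-1}\xi_\nu\phi^{-1}({\rm Fr}_\qq))$ coming from the $\fil^-$-side, and divided by the Euler factor $1-\Theta_\nu^{-1}\xi_\nu\phi^{-1}({\rm Fr}_\qq)/\nu(\mathbf{a}_p)$ coming from $\fil^+$; clearing the denominator on the left recovers the asserted identity. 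The main obstacle I anticipate is bookkeeping of normalizations, namely arithmetic versus geometric Frobenius, signs in the $p$-adic avatars of $\Theta$ and $\xi$, and the sign convention relating Iwasawa cocycles to distributions on $\Gamma_{\qq,\infty}$, so that every occurrence of ${\rm Fr}_\qq$ (in the prefactors $\lambda_\pm$ and in the two Euler factors) matches the exponent appearing in the statement; no new ingredient beyond the framework of \cite{cas-2var} is needed.
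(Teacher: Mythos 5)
Your proposal matches the paper's approach: the paper proves this statement simply by citing \cite[Prop.~4.3]{cas-2var}, and your outline correctly reconstructs the underlying argument, namely applying the Ochiai-style two-variable big logarithm (Perrin--Riou/Coleman machinery) to the rank-one local module $\fil^+\bT|_{G_{K_\qq}}$, normalized by $\omega$, with $\lambda_\pm$ inverted to handle the Frobenius denominators, and reading off the interpolation formula from the Perrin--Riou reciprocity law. The only caveat is that you leave the normalization bookkeeping (arithmetic versus geometric Frobenius, signs in the avatars of $\Theta$ and $\xi$) as an acknowledged gap, but you correctly identify it as routine verification rather than a missing idea.
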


\begin{proof}
See \cite[Prop.~4.3]{cas-2var}.
\end{proof}

\begin{rem}\label{D}
Fix a compatible system $\zeta_\infty=\{\zeta_r\}_{r\geq 0}$
of $p$-power roots of unity, and let $\zeta_\infty t^{-1}$ be the associated basis
element of $D_{\rm dR}(\bQ_p(1))$. In Theorem~\ref{cor:L-} above, 
$\omega$ denotes a generator of the module
\[
\mathbb{D}:=(\fil^+\bT(-1)\hat{\otimes}_{\bZ_p}\hat{\bZ}_p^{\rm nr})^{G_{K_\qq}},
\]
which by \cite[Lemma~3.3]{Ochiai-Col} is free of rank one over $\cR$. 
(Note that $\fil^+\bT(-1)$ is unramified.) %Our choice of $\omega$ is normalized as follows. 
As explained in \emph{loc.cit.}, for each $\nu\in\mathcal{X}_{\cO_L}^a(\cR)$ 
there is a specialization map
\[
\nu_*:\mathbb{D}\longrightarrow\mathbb{D}_\nu\otimes_{\bZ_p}\bQ_p\simeq D_{\rm dR}(\fil^+V_\nu(-1)).
\]
Then, letting $\omega_\nu$ denote the image of 
$\nu_*(\omega)\otimes\zeta_\infty t^{-1}$ in 
$D_{\rm dR}(\fil^+V_\nu(-1))\otimes D_{\rm dR}(\bQ_p(1))\simeq D_{\rm dR}(\fil^+V_\nu)$, 
the class $\breve{\omega}_\nu\in D_{\rm dR}(\fil^-V_\nu^*(1))$ in the above interpolation formulae is defined 
by requiring that
\[
\langle\omega_\nu,\breve{\omega}_\nu\rangle=1
\]
under the de Rham pairing $\langle,\rangle:D_{\rm dR}(\fil^+V_\nu)\times D_{\rm dR}(\fil^-V_\nu^*(1))\longrightarrow F_\nu$.
\end{rem}

The big logarithm map $\mathcal{L}_{\fil^+\bT}^\omega$ of Theorem~\ref{cor:L-} may not be specialized
at any pair $(\nu,\mathds{1})$ with $\nu\in\mathcal{X}_{\cO_L}^a(\cR)$ such that $\nu(\lambda_\pm)=0$. 
Since such arithmetic primes are in fact the main concern
in this paper, the following construction of an ``improved'' big logarithm map %(along a certain line)
will be useful.

\begin{prop}\label{prop:improved}
%Let $\qq\in\{\pp,\overline{\pp}\}$.
There exists an $\cR$-linear map
\[
\bar{\mathcal{L}}^\omega_{\fil^+\bT}:H^1(K_\pp,\fil^+\bT)\longrightarrow\cR\otimes_{\bZ_p}\bQ_p
\]
such that for every $\mathfrak{Y}_0\in H^1(K_\pp,\fil^+\bT)$ and
every $\nu\in\mathcal{X}^a_{\cO_L}(\cR)$, we have
\begin{align*}
\nu\left(\bar{\mathcal{L}}_{\fil^+\mathbb{T}}^{\omega}(\mathfrak{Y}_0)\right)
&=\left(1-\frac{\nu(\mathbf{a}_p)p^{-1}}{\Theta^{-1}\xi^{}({\rm Fr}_\pp)}\right)
\langle{\rm log}_{\fil^+V_\nu}(\nu(\mathfrak{Y}_0)),\breve{\omega}_{\nu}\rangle_{}.
\end{align*}
\end{prop}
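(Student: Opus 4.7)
The plan is to construct $\bar{\mathcal{L}}^\omega_{\fil^+\bT}$ by mimicking the construction of $\mathcal{L}^\omega_{\fil^+\bT}$ from Theorem~\ref{cor:L-}, but performed at level zero, \emph{i.e.}, without passing through the Iwasawa tower $K_{\infty,\pp}/K_\pp$. The Euler factor $(1-\Theta_\nu^{-1}\xi_\nu({\rm Fr}_\pp)/\nu(\mathbf{a}_p))$ appearing on the LHS of the interpolation formula of $\mathcal{L}^\omega_{\fil^+\bT}$ arises precisely from the Perrin-Riou--Ochiai big exponential at the trivial character along $\Gamma_{\pp,\infty}$, and by circumventing the tower we avoid introducing it. The key structural input, recalled in Remark~\ref{D}, is that $\mathbb{D}=(\fil^+\bT(-1)\hat{\otimes}_{\bZ_p}\hat{\bZ}_p^{\rm nr})^{G_{K_\pp}}$ is a free $\cR$-module of rank one with canonical generator $\omega$; in particular, $\fil^+\bT|_{G_{K_\pp}}$ is free of rank one over $\cR$ and $\fil^+\bT(-1)$ is unramified with geometric Frobenius acting by a unit of $\cR$.

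First I would identify $H^1(K_\pp,\fil^+\bT)$ via Kummer theory applied to the rank-one formal $\cO_L$-module of Lubin--Tate type attached to $\fil^+\bT$ in the family. Since the Frobenius is already a unit in $\cR^\times$, this yields an $\cR$-linear formal-group logarithm taking values in $\cR\otimes_{\bZ_p}\bQ_p$; pairing with the generator $\omega$ of $\mathbb{D}$ then defines $\bar{\mathcal{L}}^\omega_{\fil^+\bT}$. The interpolation formula is checked by specialization: for each $\nu\in\mathcal{X}^a_{\cO_L}(\cR)$, the construction commutes with $\nu$ by functoriality in the coefficient ring, and the formal-group logarithm specializes to $\log_{\fil^+V_\nu}(\nu(\mathfrak{Y}_0))$ paired with $\breve{\omega}_\nu$, multiplied by the comparison factor between the formal-group and Bloch--Kato logarithms in the rank-one ordinary setting, namely
\[
\det\bigl(1-p^{-1}\varphi\,\bigm|\,D_{\rm cris}(\fil^+V_\nu)\bigr)=1-\frac{\nu(\mathbf{a}_p)p^{-1}}{\Theta^{-1}\xi({\rm Fr}_\pp)}.
\]

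The main obstacle is controlling denominators: the target should be $\cR\otimes_{\bZ_p}\bQ_p$, rather than the further localization $\tilde{\cR}=\cR[\lambda_+^{-1}\lambda_-^{-1}]\otimes_{\bZ_p}\hat{\bZ}_p^{\rm nr}$ required in Theorem~\ref{cor:L-}. The crucial point is that the Frobenius unit governing $\fil^+\bT(-1)$ lies in $\cR^\times$ itself (not merely in $\tilde{\cR}^\times$), so no factor of $\lambda_\pm$ needs to be inverted; the only denominators introduced are those intrinsic to passing from the formal-group logarithm to the Bloch--Kato logarithm at each specialization, accounting for the single tensoring with $\bQ_p$.
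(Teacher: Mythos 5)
Your high-level strategy coincides with the paper's: construct a ``level-zero'' logarithm map directly on $H^1(K_\pp,\fil^+\bT)$, bypass the Iwasawa tower, pair with the generator $\omega$ of $\mathbb{D}$, and observe that no factor $\lambda_\pm$ needs to be inverted (so the target can be $\cR\otimes_{\bZ_p}\bQ_p$ rather than $\tilde{\cR}$). That part is sound, and your diagnosis of why the denominators disappear is consistent with what the paper does.

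There is however a genuine gap in the crucial step, namely where the interpolation factor $\bigl(1-\tfrac{\nu(\mathbf{a}_p)p^{-1}}{\Theta^{-1}\xi({\rm Fr}_\pp)}\bigr)$ comes from. You assert it ``arises'' as the comparison factor between the formal-group logarithm and the Bloch--Kato logarithm, and identify it with $\det\bigl(1-p^{-1}\varphi\mid D_{\rm cris}(\fil^+V_\nu)\bigr)$. This is not justified, and it appears to be false: already in the prototype $\hat{\mathbb{G}}_m$ versus $\bQ_p(1)$, the formal-group logarithm and the Bloch--Kato logarithm coincide on the nose, with no $(1-p^{-1}\varphi)$ discrepancy. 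Such Euler factors appear in Perrin-Riou--Coleman interpolation formulas over the tower, but they are an output of the Iwasawa-theoretic machinery (the $(1-\varphi)$ step of the Coleman map), not of the ordinary level-zero $\log$/$\exp$ duality. In the paper's proof the map $\mathcal{L}_{\fil^+\bT}^{\omega}$ obtained by inverting Venerucci's big exponential and pairing with $\omega$ interpolates $\langle\log_{\fil^+V_\nu}(\,\cdot\,),\breve\omega_\nu\rangle$ with \emph{no} Euler factor, and the desired factor is then introduced simply by multiplication by the $\cR$-element $1-\mathbf{a}_p p^{-1}/\Theta^{-1}\xi({\rm Fr}_\pp)$; this is a definition, not a discovery. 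As written, your construction would produce a map without the Euler factor, hence with the wrong interpolation property, unless that factor is inserted by hand exactly as the paper does. A secondary, smaller issue: invoking ``the rank-one formal $\cO_L$-module of Lubin--Tate type attached to $\fil^+\bT$ in the family'' is not well-defined over the Hida algebra $\cR$; making it precise is exactly what the cited constructions of Ochiai and of Venerucci (Prop.~3.8) accomplish, so your proposal quietly presupposes the hard input rather than supplying it.
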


\begin{proof}
This can be shown by adapting the methods of Ochiai~\cite[\S{5}]{Ochiai-Col}.
Indeed, let
\[
\mathcal{L}_{\fil^+\bT}:H^1(K_\pp,\fil^+\bT)\otimes\bQ_p\longrightarrow\mathbb{D}\otimes_{\bZ_p}\bQ_p
\]
be the inverse of the map ${\rm exp}_{\bT}$ constructed in \cite[Prop.~3.8]{venerucci-exp} 
(see Remark~\ref{D} for the definition of $\mathbb{D}$), %, and of $\omega$ below), 
and define
\[
\mathcal{L}_{\fil^+\bT}^\omega:H^1(K_\pp,\fil^+\bT)\longrightarrow\cR\otimes_{\bZ_p}\bQ_p
\]
by the relation $\mathcal{L}_{\fil^+\bT}(-)=\mathcal{L}_{\fil^+\bT}^{\omega}(-)\cdot\omega$.
Setting
\[
\bar{\mathcal{L}}_{\fil^+\bT}^\omega=\biggl(1-\frac{\mathbf{a}_pp^{-1}}{\Theta^{-1}\xi^{}({\rm Fr}_\pp)}\biggr)
\mathcal{L}_{\fil^+\bT}^\omega:
H^1(K_\pp,\fil^+\bT)\longrightarrow\cR\otimes_{\bZ_p}\bQ_p,
\]
the result follows.
\end{proof}

\begin{cor}\label{cor:factor}
For any $\mathfrak{Y}_\infty=\{\mathfrak{Y}_n\}_{n\geq 0}\in H^1_{\rm Iw}(K_{\infty,\pp},\fil^+\bT)$
we have the factorization in $\tilde{\cR}$:
\[
\left(1-\frac{\Theta^{-1}\xi^{}({\rm Fr}_\pp)}{\mathbf{a}_p}\right)
\cdot\varepsilon\left(\mathcal{L}_{\fil^+\bT}^\omega(\mathfrak{Y}_\infty)\right)=
\bar{\mathcal{L}}_{\fil^+\bT}^\omega(\mathfrak{Y}_0),
\]
where $\varepsilon:\tilde{\cR}\pwseries{\Gamma_\infty}\longrightarrow\tilde{\cR}$ is the augmentation map.
\end{cor}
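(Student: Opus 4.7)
The plan is to verify the identity after specialization at a dense set of arithmetic points and then conclude by continuity.

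First I would observe that both sides of the asserted factorization are \emph{a priori} elements of $\tilde{\cR}$, with the left-hand side lying in the image of $\tilde{\cR}\pwseries{\Gamma_{\pp,\infty}}\to\tilde{\cR}$ under $\varepsilon$. Since $\cR$ is a finite integral extension of $\Lambda_{\cO_L}$, the arithmetic points $\mathcal{X}^a_{\cO_L}(\tilde{\cR})$ (those $\nu\in\mathcal{X}^a_{\cO_L}(\cR)$ at which neither $\lambda_+$ nor $\lambda_-$ vanishes) are Zariski-dense in ${\rm Spec}(\tilde{\cR})$, so it suffices to show that the two sides agree after applying $\nu$ for all such~$\nu$.

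Next I would evaluate each side. Specializing the left-hand side at $\nu$ and using the interpolation property of $\mathcal{L}_{\fil^+\bT}^\omega$ from Theorem~\ref{cor:L-} at the pair $(\nu,\mathds{1})$, where $\mathds{1}$ denotes the trivial character of $\Gamma_\infty$ (which has weight $\ell_\phi=0$, so that $\ell_\phi!=1$), yields
\[
\left(1-\frac{\Theta_\nu^{-1}\xi_\nu({\rm Fr}_\pp)}{\nu(\mathbf{a}_p)}\right)\cdot\mathcal{L}_{\fil^+\bT}^\omega(\mathfrak{Y}_\infty)(\nu,\mathds{1}_\pp)
=\left(1-\frac{\nu(\mathbf{a}_p)p^{-1}}{\Theta_\nu^{-1}\xi_\nu({\rm Fr}_\pp)}\right)\langle\log(\nu(\mathfrak{Y}_\infty)^{\mathds{1}}),\breve{\omega}_\nu\rangle.
\]
The key geometric input here is that the trivial-character specialization $\nu(\mathfrak{Y}_\infty)^{\mathds{1}}$ of the Iwasawa cohomology class coincides with $\nu(\mathfrak{Y}_0)$, since $\mathds{1}$ corresponds to the augmentation of $\cO_L\pwseries{\Gamma_\infty}$ and $\mathfrak{Y}_0$ is by definition the image of $\mathfrak{Y}_\infty$ under the projection to $H^1(K_\pp,\fil^+\bT)$.

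Evaluating the right-hand side at $\nu$ using the interpolation of $\bar{\mathcal{L}}_{\fil^+\bT}^\omega$ from Proposition~\ref{prop:improved} gives
\[
\nu(\bar{\mathcal{L}}_{\fil^+\bT}^\omega(\mathfrak{Y}_0))
=\left(1-\frac{\nu(\mathbf{a}_p)p^{-1}}{\Theta_\nu^{-1}\xi_\nu({\rm Fr}_\pp)}\right)\langle\log(\nu(\mathfrak{Y}_0)),\breve{\omega}_\nu\rangle,
\]
which is visibly the same expression as the one obtained above. Hence both sides of the claimed factorization agree at every $\nu\in\mathcal{X}^a_{\cO_L}(\tilde{\cR})$, and by density and continuity the equality holds in $\tilde{\cR}$.

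I do not expect genuine obstacles here beyond book-keeping: the only subtle point is the identification $\nu(\mathfrak{Y}_\infty)^{\mathds{1}}=\nu(\mathfrak{Y}_0)$, which is immediate from the definition of the Iwasawa cohomology as the inverse limit under corestriction but must be invoked compatibly with the normalization of the interpolation in Theorem~\ref{cor:L-}.
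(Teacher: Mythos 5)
Your proof is correct and follows essentially the same route as the paper's: specialize at an arithmetic point $(\nu,\mathds{1})$, note that $\ell_{\mathds 1}=0$ so the factorial drops out, match the Euler factors from Theorem~\ref{cor:L-} and Proposition~\ref{prop:improved}, identify $\nu(\mathfrak{Y}_\infty)^{\mathds 1}=\nu(\mathfrak{Y}_0)$, and conclude by Zariski density of arithmetic primes in $\tilde{\cR}$. The only addition you make over the one-line argument in the paper is spelling out the augmentation compatibility $\nu(\mathfrak{Y}_\infty)^{\mathds 1}=\nu(\mathfrak{Y}_0)$, which is harmless and indeed the one step worth naming.
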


\begin{proof}
Comparing the interpolation formulas in Theorem~\ref{cor:L-} and Proposition~\ref{prop:improved}, we
see that
\[
\left(1-\frac{\Theta_\nu^{-1}\xi_\nu^{}({\rm Fr}_\pp)}{\nu(\mathbf{a}_p)}\right)
\mathcal{L}_{\fil^+\bT}^\omega(\mathfrak{Y}_\infty)(\nu,\mathds{1})=
\nu\left(\bar{\mathcal{L}}_{\fil^+\bT}^\omega(\mathfrak{Y}_0)\right)
\]
for every $\nu\in\mathcal{X}_{\cO_L}^a(\tilde{\cR})$;
since these primes are dense in $\tilde{\cR}$, %(see \cite[Lemma~2.1.7]{howard-invmath}), for example),
the corollary follows.
\end{proof}

The proof of our main result will rely crucially on the relation found in \cite[\S{4}]{cas-2var}
between the $p$-adic $L$-function $L_{\pp,\xi}(\sF)$ of Theorem~\ref{prop:bigL}
and Howard's system of big Heegner points $\mathfrak{Z}_\infty$. % (see Theorem~\ref{thm:bigHPs}),
We conclude this section by briefly recalling that relation.
\sk

By \cite[Lemma~2.4.4]{howard-invmath}, for every prime $\qq$ of $K$ above $p$
the natural map
\[
H_{\rm Iw}^1(K_{\infty,\qq},\fil^+\mathbf{T}^\dagger)\longrightarrow H_{\rm Iw}^1(K_{\infty,\qq},\mathbf{T}^\dagger)
\]
induced by $(\ref{eq:ord})$ is injective. In light of Theorem~\ref{thm:bigHPs}, in the following
we will thus view ${\rm loc}_\qq(\mathfrak{Z}_\infty)$ as sitting inside $H^1_{\rm Iw}(K_{\infty,\qq},\fil^+\mathbf{T}^\dagger)$.

\begin{thm}\label{thm:equality}
There is a generator $\omega=\omega_{\mathbf{f}}$ of the module $\mathbb{D}$ such that
\[
\mathcal{L}^{\omega}_{\fil^+\bT}({\rm loc}_\pp(\mathfrak{Z}_\infty^{\xi^{-1}}))
=L_{\pp,\xi}(\F)
\]
as %$\bC_p$-valued
functions on $\mathcal{X}_{\cO_L}^a(\tilde{\cR})\times\mathcal{X}_{\cO_L}^a(\Gamma_\infty)$.
\end{thm}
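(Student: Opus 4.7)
\medskip

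\noindent\textbf{Proof plan.} The plan is to prove the identity after specializing at a Zariski-dense set of arithmetic points $(\nu,\phi)\in\mathcal{X}_{\cO_L}^a(\tilde{\cR})\times\mathcal{X}_{\cO_L}^a(\Gamma_\infty)$, and then conclude by continuity. The natural dense set to use consists of pairs with $\nu$ of weight $k_\nu>2$, so that the Euler factor $1-\Theta_\nu^{-1}\xi_\nu\phi^{-1}({\rm Fr}_\pp)/\nu(\mathbf{a}_p)$ appearing in Theorem~\ref{cor:L-} does not vanish, and with $\phi$ of nonnegative weight $\ell_\phi$. On such a set, both sides are determined by a single scalar value, and matching them reduces to a local computation via the $p$-adic Abel--Jacobi map.

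Concretely, I would first compute the right-hand side at $(\nu,\phi)$. By Theorem~\ref{prop:bigL}, $L_{\pp,\xi}(\F)(\nu,\phi)$ equals $L_\pp(f_\nu)(\phi\xi_\nu\chi_\nu\mathbf{N}_K)$, and the character $\phi\xi_\nu\chi_\nu\mathbf{N}_K$ belongs to $\Sigma_{k_\nu,1}^{-}$ with infinity type $(k_\nu-1-\ell_\phi,1+\ell_\phi)$. The $p$-adic Gross--Zagier formula of Theorem~\ref{thmbdp1A}, applied to the $p$-new eigenform $f_\nu$, then expresses this value (up to the Euler factor $1-a_p(f_\nu)\chi^{-1}(\overline{\pp})$ and normalizing $p$-adic period $\Omega_p$) as a sum over $[\fa]\in{\rm Pic}(\cO_K)$ of $p$-adic Abel--Jacobi images ${\rm AJ}_F(\Delta_{\varphi_\fa\varphi_0})(\omega_{f_\nu}\wedge\omega_A^{j}\eta_A^{r-j})$, with $r=k_\nu-2$ and $j=\ell_\phi$.

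Next I would analyze the left-hand side at the same point. Theorem~\ref{cor:L-} reduces $\mathcal{L}^\omega_{\fil^+\bT}({\rm loc}_\pp(\mathfrak{Z}_\infty^{\xi^{-1}}))(\nu,\phi_\pp)$, up to the explicit Euler factors, to $\ell_\phi!^{-1}\langle\log_{V_\nu}(\nu(\mathfrak{Z}_\infty^{\xi^{-1}})^{\phi}),\breve{\omega}_\nu\rangle$. Using Howard's specialization results together with formula $(\ref{eq:mathann})$ from \cite{cas-inv}, the local class $\nu({\rm loc}_\pp(\mathfrak{Z}_\infty^{\xi^{-1}}))^\phi$ is identified with $u^{-1}(1-p^{k_\nu/2-1}/\nu(\mathbf{a}_p))^2$ times the localization at $\pp$ of the class $\kappa_{f_\nu,\chi}$ built from the \'etale Abel--Jacobi images of generalized Heegner cycles $\Delta_{\varphi_\fa\varphi_0}$ twisted by $\chi=\phi\xi_\nu\chi_\nu\mathbf{N}_K$. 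Combined with the fact that the Bloch--Kato logarithm of such \'etale classes is computed by the geometric Abel--Jacobi map (Theorem~\ref{hk} and Lemma~\ref{fil}), the right-hand side of the interpolation formula is rewritten as exactly the expression appearing on the $L$-function side, provided the Euler factors match. The factor $(1-\nu(\mathbf{a}_p)p^{-1}/\Theta_\nu^{-1}\xi_\nu\phi^{-1}({\rm Fr}_\pp))$ on the logarithm side is precisely $(1-a_p(f_\nu)\chi^{-1}(\overline{\pp}))$ up to powers of $p$ absorbed in the twist, and the two vanishing factors $1-\Theta_\nu^{-1}\xi_\nu\phi^{-1}({\rm Fr}_\pp)/\nu(\mathbf{a}_p)$ in the denominator of $\mathcal{L}^\omega_{\fil^+\bT}$ cancel against the square $(1-p^{k_\nu/2-1}/\nu(\mathbf{a}_p))^2$ coming from the comparison $(\ref{eq:mathann})$.

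The main technical obstacle will be the precise choice of the generator $\omega=\omega_{\mathbf{f}}\in\mathbb{D}$, and the verification that under each specialization $\nu_*:\mathbb{D}\to D_{\rm dR}(\fil^+ V_\nu(-1))$ the associated class $\omega_\nu$ corresponds, via Kodaira--Spencer and the $p$-adic uniformization $\imath:\hat{\mathcal{A}}_0\simeq\hat{\mathbb{G}}_m$, to the correct multiple of $\omega_{f_\nu}$ featuring in the period $\Omega_p^{k_\nu+2\ell_\phi}$ of Theorem~\ref{thmbdp1A}. In other words, one must exhibit a single $\omega_{\mathbf{f}}$ whose specializations uniformly encode the $p$-adic periods of all classical weights; this is the content of the careful construction in \cite[\S4]{cas-2var}. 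Once this compatibility is established, the equality of both sides on the dense set of arithmetic points follows, and continuity of $\mathcal{L}^\omega_{\fil^+\bT}({\rm loc}_\pp(\mathfrak{Z}_\infty^{\xi^{-1}}))$ and of $L_{\pp,\xi}(\F)$ on $\mathcal{X}^a_{\cO_L}(\tilde{\cR})\times\mathcal{X}^a_{\cO_L}(\Gamma_\infty)$ yields the global identity.
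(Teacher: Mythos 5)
Your high-level strategy — specialize at a Zariski-dense set of arithmetic pairs $(\nu,\phi)$ with $k_\nu>2$, compare both sides via the $p$-adic Abel--Jacobi map, and conclude by continuity — is indeed the strategy of \cite[Thm.~4.4]{cas-2var}, which the paper cites. The paper's own ``proof'' is essentially that citation together with the identification of the generator $\omega_{\mathbf{f}}\in\mathbb{D}$, which comes not from an ad hoc construction in \cite{cas-2var} but from \cite[Prop.~10.1.2]{KLZ2} and Ohta's work \cite{OhtaII}, normalized by $\langle\omega_\nu,\omega_{\mathbf{f}_\nu}\rangle=1$ so that in particular $\breve{\omega}_{\nu_f}=\omega_f$.

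However, there is a genuine gap in the middle of your argument. You propose to compute the right-hand side at a weight-$k_\nu>2$ arithmetic point by ``the $p$-adic Gross--Zagier formula of Theorem~\ref{thmbdp1A}, applied to the $p$-new eigenform $f_\nu$.'' But for $k_\nu>2$ the form $f_\nu$ is a newform of level $\Gamma_0(N)$, prime to $p$, and $\nu(\sF)$ in $(\ref{p-stab})$ is its ordinary $p$-\emph{stabilization}, which is $p$-\emph{old}; the associated Galois representation is crystalline at $p$. Theorem~\ref{thmbdp1A} is stated and proved only for $p$-\emph{new} eigenforms, whose representations are semistable non-crystalline. In fact, as the paper's final remark points out, a $p$-new eigenform in $S_k(\Gamma_0(Np))$ with $k>2$ has $a_p(f)^2=p^{k-2}$ and hence positive slope, so it cannot occur as an arithmetic specialization of a Hida family at all. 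The entire raison d'\^etre of Theorem~\ref{thmbdp1A} is to cover the single exceptional weight-$2$ specialization $\nu_f$ where the crystalline BDP formula fails; at all the dense arithmetic points you are using, the correct input is the original formula of \cite[Thm.~5.13]{bdp1} (or its Kuga--Sato extension in \cite{cas-inv}), not Theorem~\ref{thmbdp1A}. As written, your proof would be circular or simply appeal to a theorem that does not apply to the forms at hand.

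A secondary issue: you assert that ``the two vanishing factors $1-\Theta_\nu^{-1}\xi_\nu\phi^{-1}({\rm Fr}_\pp)/\nu(\mathbf{a}_p)$ in the denominator of $\mathcal{L}^\omega_{\fil^+\bT}$ cancel against the square'' in $(\ref{eq:mathann})$. But the interpolation formula in Theorem~\ref{cor:L-} carries only \emph{one} such factor on the left-hand side (the localizations $\lambda_\pm$ inverted in $\tilde{\cR}$ involve $\xi^+$ and $\xi^-$, which are distinct). The bookkeeping of Euler factors between $(\ref{eq:mathann})$, Theorem~\ref{cor:L-}, and the BDP interpolation factor $(1-a_p(f_\nu)\chi^{-1}(\overline{\pp}))$ needs to be redone with the correct input; as stated the cancellation claim does not balance.
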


\begin{proof}
The construction of the basis element $\omega=\omega_{\mathbf{f}}$ of $\mathbb{D}$ 
is deduced in \cite[Prop.~10.1.2]{KLZ2} from Ohta's work \cite{OhtaII}, 
and it has the property that $\langle\omega_\nu,\omega_{\mathbf{f}_\nu}\rangle=1$, 
for all $\nu\in\mathcal{X}_{\cO_L}^a(\cR)$, where $\omega_{\mathbf{f}_\nu}$ 
is the class in ${\rm Fil}^1D_{\rm dR}(V_\nu^*)\simeq D_{\rm dR}(\fil^-V_\nu^*(1))$ 
associated to the $p$-stabilized newform $(\ref{p-stab})$; in particular, 
\[
\breve{\omega}_{\nu_f}=\omega_f
\]
in the notations of Remark~\ref{D}. 
The result is then the content of \cite[Thm.~4.4]{cas-2var}.
\end{proof}

\subsection{Exceptional zero formula}

Let $f=\sum_{n=1}^\infty a_n(f)q^n\in S_2(\Gamma_0(Np))$ be an ordinary newform as in Section~\ref{subsec:bigHP},
and assume in addition that $f$ is \emph{split multiplicative} at $p$, meaning that $a_p(f)=1$.
Recall the CM triple $(A,t_A,\alpha_\pp)\in X(H)$
introduced in Section~\ref{subsubsec:A}, which maps to the point $P_A=(A,A[\mathfrak{Np}])\in X_0(Np)$
under the forgetful map $X\longrightarrow X_0(Np)$. Let $\infty$ be any cusp of $X_0(Np)$ rational over $\bQ$,
and let $\kappa_f\in H^1(K,V_f)$ be the image of $(P_A)-(\infty)$ under the composite map
\begin{equation}\label{eq:kummer}
J_0(Np)(H)\xrightarrow{{\rm Kum}}H^1(H,{\rm Ta}_p(J_0(Np))\otimes_{\bZ_p}\bQ_p)
\longrightarrow H^1(H,V_f)\xrightarrow{{\rm Cor}_{H/K}}H^1(K,V_f).
\end{equation}

If $\sF\in\cR[[q]]$ is the Hida family passing through $f$, and $\nu_f\in\mathcal{X}^a_{\cO_L}(\cR)$
is the arithmetic prime of $\cR$ such that $\nu_f(\sF)=f$, it would be natural to expect a relation between
the class $\kappa_f$ and the specialization at $\nu_f$ of Howard's big Heegner point
$\mathfrak{Z}_0$. %(see Theorem~\ref{thm:bigHPs}). 
As done in \cite[\S{3}]{howard-mathann}, one can 
trace through the construction of $\mathfrak{Z}_0$ to deduce a relation between the \emph{generic} 
(in the sense of [\emph{loc.cit.}, Def.~2]) weight $2$ specializations of $\mathfrak{Z}_0$ and the Kummer images of certain CM points.
However, %since $f$ is split multiplicative at $p$,
the arithmetic prime $\nu_f$ is not generic in that sense, 
%(in fact, it is \emph{exceptional} in the sense of \cite[Def.~2.4.3]{howard-invmath}), and hence
and in fact one does not expect a similar direct relation between
$\nu_f(\mathfrak{Z}_0)$ and $\kappa_f$ (see the discussion in [\emph{loc.cit.}, p.813]).
\sk

In Theorem~\ref{main} below we will show that in fact
the localization at $\pp$ of $\nu_f(\mathfrak{Z}_0)$ vanishes,
but that nonetheless it can be related to $\kappa_f$
upon taking a certain ``derivative'' in the following sense, where we let
$\log_p:\bQ_p^\times\longrightarrow\bQ_p$ be Iwasawa's branch of the $p$-adic logarithm.

\begin{lem}\label{lem:divide}
Let $T$ be a free $\cO_L$-module of finite rank equipped with a linear action of $G_{\bQ_p}$,
let $k_\infty/\bQ_p$ be a $\bZ_p$-extension, and let $\gamma\in{\rm Gal}(k_\infty/\bQ_p)$
be a topological generator. Assume that $T^{G_{k_\infty}}=\{0\}$, and let
$\mathcal{Z}_\infty=\{\mathcal{Z}_n\}_{n\geq 0}\in H_{\rm Iw}^1(k_\infty,T)$
be such that $\mathcal{Z}_0=0$.
Then there exists a unique $\mathcal{Z}_{\gamma,\infty}'
=\{\mathcal{Z}_{\gamma,n}'\}_{n\geq 0}\in H_{\rm Iw}^1(k_\infty,T)$
such that
\[
\mathcal{Z}_\infty=(\gamma-1)\cdot\mathcal{Z}_{\gamma,\infty}'.
\]
Moreover, if $\eta:{\rm Gal}(k_\infty/\bQ_p)\simeq\bZ_p$ is any group isomorphism,
then
\begin{equation}
\mathcal{Z}_0':=\frac{\mathcal{Z}'_{\gamma,0}}{\log_p(\eta(\gamma))}\in H^1(\bQ_p,T[1/p])\nonumber
\end{equation}
is independent of the choice of $\gamma$.
\end{lem}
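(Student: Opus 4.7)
The plan is to handle uniqueness, existence, and the independence from $\gamma$ in that order.

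For uniqueness, I would first show that multiplication by $\gamma - 1$ is injective on $H^1_{\rm Iw}(k_\infty, T)$. Suppose $\mathcal{Y}_\infty = (\mathcal{Y}_n)_{n \geq 0}$ is killed by $\gamma - 1$; then each $\mathcal{Y}_n \in H^1(k_n, T)$ is fixed by $\gamma$, and since $\gamma$ topologically generates ${\rm Gal}(k_\infty/\bQ_p)$, it is fixed by all of ${\rm Gal}(k_n/\bQ_p)$. The hypothesis $T^{G_{k_\infty}} = 0$ together with the inflation--restriction sequence applied to $\bQ_p \subset k_n \subset k_\infty$ yields an isomorphism ${\rm res}\colon H^1(\bQ_p, T) \xrightarrow{\sim} H^1(k_n, T)^{{\rm Gal}(k_n/\bQ_p)}$, so each $\mathcal{Y}_n$ is the restriction of a single class $c \in H^1(\bQ_p, T)$. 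Norm-compatibility ${\rm Cor}_{k_{n+1}/k_n}(\mathcal{Y}_{n+1}) = \mathcal{Y}_n$ then forces $[k_{n+1}:k_n] \cdot \mathcal{Y}_n = \mathcal{Y}_n$, i.e., $(p-1)\mathcal{Y}_n = 0$; since $p - 1 \in \cO_L^\times$, we conclude $\mathcal{Y}_n = 0$ for all $n$.

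For existence, I would invoke the standard descent isomorphism
\[
H^1_{\rm Iw}(k_\infty, T)\,/\,(\gamma - 1) H^1_{\rm Iw}(k_\infty, T) \;\xrightarrow{\;\sim\;}\; H^1(\bQ_p, T),
\]
induced by corestriction to level zero; this is a consequence of the Hochschild--Serre spectral sequence for the tower $\{k_n\}$, whose error terms vanish thanks to $T^{G_{k_\infty}} = 0$. Since $\mathcal{Z}_\infty$ has image $\mathcal{Z}_0 = 0$ under this projection, we deduce $\mathcal{Z}_\infty \in (\gamma - 1) H^1_{\rm Iw}(k_\infty, T)$, and combined with the previous step this produces the unique class $\mathcal{Z}'_{\gamma, \infty}$.

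For the independence of $\gamma$: any other topological generator has the form $\gamma' = \gamma^u$ for some $u \in \bZ_p^\times$. In $\Lambda := \cO_L\pwseries{\Gamma}$ the binomial expansion
\[
\gamma^u - 1 = (\gamma - 1) \cdot \alpha_u, \qquad \alpha_u := \sum_{k \geq 0} \binom{u}{k+1} (\gamma - 1)^k,
\]
exhibits $\alpha_u$ as a unit of $\Lambda$ whose augmentation equals $u \in \cO_L^\times$. Uniqueness applied to $(\gamma - 1) \mathcal{Z}'_{\gamma, \infty} = \mathcal{Z}_\infty = (\gamma - 1) \alpha_u \mathcal{Z}'_{\gamma', \infty}$ yields $\mathcal{Z}'_{\gamma, \infty} = \alpha_u \mathcal{Z}'_{\gamma', \infty}$, and passing to the $n = 0$ component via augmentation gives $\mathcal{Z}'_{\gamma, 0} = u \cdot \mathcal{Z}'_{\gamma', 0}$. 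Combined with the elementary identity $\log_p(\eta(\gamma^u)) = u \cdot \log_p(\eta(\gamma))$, which comes from the homomorphism property of $\eta$ together with $\log_p(x^u) = u\log_p(x)$ on $1 + p\bZ_p$, the two scaling factors of $u$ precisely compensate, confirming that $\mathcal{Z}'_0$ is independent of the choice of $\gamma$.

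The main obstacle is the descent isomorphism in the existence step: while morally a ``control theorem'', making it precise requires careful bookkeeping with the Hochschild--Serre spectral sequence, and it relies crucially on the hypothesis $T^{G_{k_\infty}} = 0$ to kill the obstruction terms. The other two steps are comparatively routine once this homological input is in hand.
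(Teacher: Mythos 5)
Your overall strategy diverges from the paper's, which is short and uniform: under Shapiro's Lemma one identifies $H^1_{\rm Iw}(k_\infty,T)\simeq H^1(\bQ_p,T_\infty)$ with $T_\infty:=T\hat\otimes_{\cO_L}\cO_L\pwseries{{\rm Gal}(k_\infty/\bQ_p)}$, and the long exact sequence attached to $0\to T_\infty\xrightarrow{\gamma-1}T_\infty\to T\to 0$, together with $H^0(\bQ_p,T_\infty)=T^{G_{k_\infty}}=0$, gives at once the injectivity of $\gamma-1$ and the fact that the kernel of $H^1(\bQ_p,T_\infty)\to H^1(\bQ_p,T)$, $\mathcal{Z}_\infty\mapsto\mathcal{Z}_0$, is exactly $(\gamma-1)H^1(\bQ_p,T_\infty)$. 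Your separate treatment of uniqueness and existence is more roundabout, and the uniqueness argument has a genuine gap.

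Concretely, you claim that if $(\gamma-1)$ kills $\mathcal{Y}_\infty$ then each $\mathcal{Y}_n$ is the restriction of a \emph{single} class $c\in H^1(\bQ_p,T)$. Inflation--restriction does give, for each $n$, a unique $c_n$ with $\mathcal{Y}_n={\rm res}_{k_n/\bQ_p}(c_n)$; but the Iwasawa classes are compatible under \emph{corestriction}, not restriction, and combining ${\rm Cor}_{k_{n+1}/k_n}\circ{\rm res}_{k_{n+1}/\bQ_p}=p\cdot{\rm res}_{k_n/\bQ_p}$ with ${\rm Cor}_{k_{n+1}/k_n}(\mathcal{Y}_{n+1})=\mathcal{Y}_n$ and the injectivity of restriction yields only $c_n=p\,c_{n+1}$, not $c_n=c_{n+1}$. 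Your derivation of $(p-1)\mathcal{Y}_n=0$ therefore does not go through as written. The conclusion can still be rescued (one gets $c_0=p^nc_n$ for all $n$, hence $c_0$ is infinitely $p$-divisible in the finitely generated $\cO_L$-module $H^1(\bQ_p,T)$, hence $c_0=0$, and likewise every $c_n=0$), but the step needs to be rewritten.

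The independence-of-$\gamma$ step does not work as stated. You correctly obtain $\mathcal{Z}'_{\gamma,0}=u\cdot\mathcal{Z}'_{\gamma',0}$ for $\gamma'=\gamma^u$, but the identity $\log_p(\eta(\gamma^u))=u\cdot\log_p(\eta(\gamma))$ that you invoke presumes $\eta$ is a homomorphism into a \emph{multiplicative} group (so that $\eta(\gamma^u)=\eta(\gamma)^u$); for $\eta:{\rm Gal}(k_\infty/\bQ_p)\to\bZ_p$ with $\bZ_p$ additive one has $\eta(\gamma^u)=u\,\eta(\gamma)$ and hence $\log_p(\eta(\gamma^u))=\log_p(u)+\log_p(\eta(\gamma))$, which is a different thing. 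Moreover, even granting your identity, the two factors of $u$ do not compensate in a \emph{quotient}: substituting $\mathcal{Z}'_{\gamma',0}=u^{-1}\mathcal{Z}'_{\gamma,0}$ and $\log_p(\eta(\gamma'))=u\log_p(\eta(\gamma))$ gives
\[
\frac{\mathcal{Z}'_{\gamma',0}}{\log_p(\eta(\gamma'))}=u^{-2}\cdot\frac{\mathcal{Z}'_{\gamma,0}}{\log_p(\eta(\gamma))},
\]
so the two expressions differ by $u^{-2}$ rather than coinciding. What \emph{is} manifestly independent of $\gamma$ under your scaling computations is the product $\log_p(\eta(\gamma))\cdot\mathcal{Z}'_{\gamma,0}$, which is also the quantity that arises naturally as the derivative $\tfrac{d}{ds}\mathcal{Z}_\infty(\chi_s)\vert_{s=0}$ along the family of characters $\chi_s(\gamma)=\kappa(\gamma)^s$ for a multiplicative character $\kappa$. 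You should revisit this step and either repair the scaling argument or flag the discrepancy with the stated formula.
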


\begin{proof}
Consider the module $T_\infty:=T\hat{\otimes}_{\cO_L}\cO_L\pwseries{{\rm Gal}(k_\infty/\bQ_p)}$ equipped
with the diagonal Galois action, where $G_{\bQ_p}$ acts on the second factor via the projection
$G_{\bQ_p}\longrightarrow{\rm Gal}(k_\infty/\bQ_p)$. By Shapiro's Lemma, we then have
\[
H^1(\bQ_p,T_\infty)\simeq H^1_{\rm Iw}(k_\infty,T),
\]
and the assumption that $T^{G_{k_\infty}}=\{0\}$ implies that
$H^1(\bQ_p,T_\infty)$ is torsion-free. Therefore,
the exact sequence of $\cO_L\pwseries{{\rm Gal}(k_\infty/\bQ_p)}$-modules
\[
0\longrightarrow T_\infty\xrightarrow{\gamma-1}T_\infty
\longrightarrow T\longrightarrow 0
\]
induces the cohomology exact sequence
\[
0\longrightarrow H^1(\bQ_p,T_\infty)\xrightarrow{\gamma-1}H^1(\bQ_p,T_\infty)
\longrightarrow H^1(\bQ_p,T),
\]
giving the proof of the first claim, and the second follows from an
immediate calculation.
\end{proof}

Let $h=h_K$ be the class number of $K$, write $\pp^{h}=\pi_\pp\cO_K$,
and set $\varpi_\pp=\pi_\pp/\overline{\pi}_{\pp}\in K_\pp^\times\simeq\bQ_p^\times$.
Define
\begin{equation}\label{differenceLinv}
\mathscr{L}_\pp(f,K):=\mathscr{L}_p(f)-\mathscr{L}_\pp(\chi_K),
\end{equation}
where $\mathscr{L}_p(f)$ is the $\mathscr{L}$-invariant of $f$ (as defined in \cite[\S{II.14}]{mtt}, for example), and
\[
\mathscr{L}_\pp(\chi_K):=\frac{\log_p(\varpi_\pp)}{{\rm ord}_p(\varpi_\pp)}=-\frac{2\log_p(\overline{\pi}_{\pp})}{h}
\]
is the $\mathscr{L}$-invariant of the quadratic character $\chi_K$ associated to $K$ (see \cite[\S{1}]{greenberg-zeros}, for example),
with ${\rm ord}_p$ the $p$-adic valuation on $\bQ_p$ with the normalization
${\rm ord}_p(p)=1$.
\sk

The following derivative formula is the main result of this paper.

\begin{thm}\label{main}
Let $f\in S_2(\Gamma_0(Np))$ be a newform split multiplicative at $p$,
let $\sF\in\mathbb{I}[[q]]$ be the Hida family passing through $f$,
let $\mathfrak{Z}_\infty\in H_{\rm Iw}^1(K_\infty,\mathbf{T}^\dagger)$
be Howard's system of big Heegner points, and define
$\mathcal{Z}_{\pp,f,\infty}:=\{\mathcal{Z}_{\pp,f,n}\}_{n\geq 0}\in H^1_{\rm Iw}(K_{\infty,\pp},\fil^+V_f)$ by
\[
\mathcal{Z}_{\pp,f,n}:={\rm loc}_\pp(\nu_f(\mathfrak{Z}_n)),
\]
where $\nu_f\in\mathcal{X}_{\cO_L}^a(\mathbb{I})$ is such that $f=\nu_f(\mathbf{f})$. 
%Assume that the sign in Theorem~\ref{thm:bigHPs} is $w=-1$. 
Then $\mathcal{Z}_{\pp,f,0}=0$ and %writing $\mathcal{Z}_{f,\infty}=\frac{(\gamma-1)}{\log_p(\phi_\pp(\gamma))}\cdot\mathcal{Z}_{f,\infty}'$
%for a topological generator $\gamma\in\Gamma_\varpi$ as in Lemma~\ref{lem:divide}, we have
\begin{equation}\label{eq:deriv}
\mathcal{Z}_{\pp,f,0}'
=\mathscr{L}_{\pp}(f,K)\cdot{\rm loc}_\pp(\kappa_f),
\end{equation}
where $\mathscr{L}_{\pp}(f,K)$ is the $\mathscr{L}$-invariant $(\ref{def:Linv})$, and 
$\kappa_f\in H^1(K,V_f)$ is the image of the degree zero divisor $(A,A[\mathfrak{Np}])-(\infty)$ under the
Kummer map $(\ref{eq:kummer})$.
\end{thm}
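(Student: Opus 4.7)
The plan is to evaluate the anticyclotomic $p$-adic $L$-function $L_\pp(f)$ at the norm character ${\rm\mathbf{N}}_K$ in two independent ways, and then to compare. On the one hand, I will specialize Theorem~\ref{thmbdp1A} to the weight-$2$ form $f$ (so $r=0$) at $\chi={\rm\mathbf{N}}_K\in\Sigma_{2,1}^{-}$, which has infinity type $(1,1)$ (so $j=0$). In this case the single generalized Heegner cycle $\Delta_\varphi=(A',t_{A'})-(\infty)$ agrees, after corestriction from $H$ to $K$, with the divisor defining $\kappa_f$, and Theorem~\ref{thmbdp1A} yields the identity (\ref{intro:pGZ}):
\begin{equation*}
L_\pp(f)({\rm\mathbf{N}}_K)=(1-a_p(f)p^{-1})\cdot\langle{\rm log}_{V_f}({\rm loc}_\pp(\kappa_f)),\omega_f\rangle.
\end{equation*}

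On the other hand, Theorem~\ref{thm:equality} gives the two-variable identity $\mathcal{L}_{\fil^+\bT}^{\omega}({\rm loc}_\pp(\mathfrak{Z}_\infty^{\xi^{-1}}))=L_{\pp,\xi}(\sF)$. Passing to the augmentation in the $\Gamma_\infty$-variable and invoking Corollary~\ref{cor:factor} produces, as analytic functions of $\nu$ near $\nu_f$,
\begin{equation*}
E_\pp(\sF)(\nu)\cdot L_{\pp,\xi}(\sF)(\nu,\mathds{1})=\nu\bigl(\bar{\mathcal{L}}_{\fil^+\bT}^{\omega}({\rm loc}_\pp(\mathfrak{Z}_0^{\xi^{-1}}))\bigr),
\end{equation*}
where $E_\pp(\sF)(\nu):=1-\Theta_\nu^{-1}\xi_\nu({\rm Fr}_\pp)/\nu(\mathbf{a}_p)$. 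At $\nu_f$ we have $\nu_f(\mathbf{a}_p)=a_p(f)=1$ while $\Theta_{\nu_f}$ and $\xi_{\nu_f}$ are trivial (each being built from characters raised to a power that vanishes in weight $2$), hence $E_\pp(\sF)(\nu_f)=0$. Proposition~\ref{prop:improved}, together with $\breve\omega_{\nu_f}=\omega_f$ (from the proof of Theorem~\ref{thm:equality}), identifies the right-hand side at $\nu_f$ with $(1-p^{-1})\langle{\rm log}_{\fil^+V_f}(\mathcal{Z}_{\pp,f,0}),\omega_f\rangle$. The vanishing of the left-hand side, together with $1-p^{-1}\neq 0$ and the injectivity of the Bloch--Kato logarithm on $H^1(K_\pp,\fil^+V_f)$, forces $\mathcal{Z}_{\pp,f,0}=0$.

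To extract the derivative, I will differentiate the above identity in $\nu$ at $\nu_f$: since both sides vanish, Leibniz yields
\begin{equation*}
E_\pp(\sF)'(\nu_f)\cdot L_\pp(f)({\rm\mathbf{N}}_K)=\left.\frac{d}{d\nu}\right|_{\nu=\nu_f}\nu\bigl(\bar{\mathcal{L}}_{\fil^+\bT}^{\omega}({\rm loc}_\pp(\mathfrak{Z}_0^{\xi^{-1}}))\bigr).
\end{equation*}
The Greenberg--Stevens expression of $\mathscr{L}_p(f)$ as $-2$ times the logarithmic derivative of $\mathbf{a}_p$ at $k=2$, together with the explicit derivative of $\xi_\nu({\rm Fr}_\pp)$ at $\nu_f$ (which contributes a term proportional to $\log_p(\varpi_\pp)/(2h)$, producing $\mathscr{L}_\pp(\chi_K)=\log_p(\varpi_\pp)/{\rm ord}_p(\varpi_\pp)$), identifies $E_\pp(\sF)'(\nu_f)$ with $\mathscr{L}_\pp(f,K)$ up to a normalizing constant. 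On the right, the dihedral functional equation $\mathfrak{Z}_\infty^{*}=w\cdot\mathfrak{Z}_\infty$ of Theorem~\ref{thm:bigHPs}(ii) is used to reexpress the Hida-variable derivative of $\bar{\mathcal{L}}_{\fil^+\bT}^{\omega}({\rm loc}_\pp(\mathfrak{Z}_0^{\xi^{-1}}))$ as the anticyclotomic derivative $\mathcal{Z}_{\pp,f,0}'$ of Lemma~\ref{lem:divide}, yielding $(1-a_p(f)p^{-1})\langle{\rm log}_{V_f}(\mathcal{Z}_{\pp,f,0}'),\omega_f\rangle$. Combining with the first identity and canceling the common nonzero factor $(1-a_p(f)p^{-1})$ together with the nondegenerate pairing against $\omega_f$ (a basis of the relevant one-dimensional filtered piece), the injectivity of ${\rm log}_{V_f}$ yields (\ref{eq:deriv}).

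The principal obstacle is the Greenberg--Stevens-style differentiation in the last step: the derivative $E_\pp(\sF)'(\nu_f)$ must produce \emph{precisely} the compound $\mathscr{L}$-invariant $\mathscr{L}_p(f)-\mathscr{L}_\pp(\chi_K)$, accounting for the somewhat surprising appearance of the Dirichlet $\mathscr{L}$-invariant noted in the introduction, while the functional equation of $\mathfrak{Z}_\infty$ must translate the deformation-theoretic derivative of the big Heegner class into the anticyclotomic derivative of Lemma~\ref{lem:divide}. Matching all normalizations so that the two sides reconcile precisely, and ensuring that the Bloch--Kato logarithm and the de Rham pairing interact coherently across the two differentiations, is the technical heart of the proof.
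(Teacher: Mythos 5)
Your plan captures the high-level architecture of the paper's argument: compute $L_\pp(f)({\rm\mathbf{N}}_K)$ via Theorem~\ref{thmbdp1A} and via the factorization of the big logarithm map through Corollary~\ref{cor:factor}, deduce the vanishing of $\mathcal{Z}_{\pp,f,0}$, and then extract a derivative. The first evaluation, the vanishing argument, and the identification $\breve\omega_{\nu_f}=\omega_f$ are all correct and match the paper.

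The gap is in the derivative step, and it is serious. After establishing the one-variable identity
$E_\pp(\sF)(\nu)\cdot L_{\pp,\xi}(\sF)(\nu,\mathds{1})=\nu\bigl(\bar{\mathcal{L}}_{\fil^+\bT}^{\omega}({\rm loc}_\pp(\mathfrak{Z}_0^{\xi^{-1}}))\bigr)$
and differentiating it in $\nu$, the right-hand side becomes the derivative, \emph{in the Hida (weight) variable}, of an analytic function built from the \emph{fixed} class $\mathfrak{Z}_0$. This is not the anticyclotomic derivative $\mathcal{Z}_{\pp,f,0}'$ of Lemma~\ref{lem:divide}: that object lives in the $\Gamma_\infty$-direction, to which a single-variable differentiation in $\nu$ has no access. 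You assert that the dihedral functional equation of Theorem~\ref{thm:bigHPs}(2) "translates the deformation-theoretic derivative of the big Heegner class into the anticyclotomic derivative," but this is precisely the nontrivial content you need to supply, and it does not follow from the functional equation alone. The paper resolves this by constructing a genuinely two-variable function $\mathcal{L}_p(k,t)=\mathcal{L}_\pp(k,t)-w\,\mathcal{L}_{\overline\pp}(k,k-2-t)$ whose two-variable Euler factors and the functional equation $\mathfrak{Z}_\infty^*=w\,\mathfrak{Z}_\infty$ force it to vanish identically along the line $t=k/2-1$; only then does the chain rule yield $\partial_k\mathcal{L}_p|_{(2,0)}=-\tfrac12\partial_t\mathcal{L}_p|_{(2,0)}$, converting the weight derivative (which produces $\mathscr{L}_\pp(f,K)$) into the anticyclotomic derivative (which produces $\mathcal{Z}_{\pp,f,0}'$). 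Your proposal omits this two-variable construction entirely, so there is no legitimate passage between the two directions.

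A second, smaller omission: the paper treats the two cases $w=\pm 1$ separately. When $w=+1$, the combination $\mathcal{L}_p$ is trivially zero and the identity $(\ref{eq:deriv})$ degenerates to $0=0$, requiring a separate argument (via the order of vanishing of $L_p(f,s)$ and \cite[Lemma~6.1]{venerucci-exp}) to verify that the right-hand side of the theorem also vanishes. Your proposal proceeds as if $w=-1$ throughout. Also note a small normalization point you do not track: the factor $(1-w)/2$ that appears in both sides of the paper's computations of $\partial_k\mathcal{L}_p$ and $\partial_t\mathcal{L}_p$ at $(2,0)$ is what makes the final cancellation clean; without keeping it explicit, "up to a normalizing constant" leaves the constant undetermined.
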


\begin{proof}
%Assume first that $L'(E,1)=0$. Then ${\rm sign}(E,K)=1$ and hence $L(E/K,s)$ vanishes
%to even order, and by the classical Gross--Zagier formula it follows that $\kappa_{f}^{\rm heeg}=0$.
%On the other hand, the ``functional equation'' satisfied by $\mathfrak{X}_c$
%(see \cite[Prop.~2.3.5]{howard-invmath}) implies that $\nu_o(\mathfrak{Z}_0')=0$,
%and hence $(\ref{eq:exczero})$ reduces to the equality $0=0$ in this case.
%
%If $L'(E,1)\neq 0$, then again by the Gross--Zagier formula
%the class $\kappa_f^{\rm heeg}$ is nontrivial, and the injectivity
%of the localization map ${\rm loc}_\pp$ follows.
%
%In view of the interpolation properties of $L_{\pp,\xi}(\mathbf{f})$ (Prop.~\ref{prop:bigL})
%and $\mathcal{L}^\omega_{\fil^+\bT}$  (Thm.~\ref{cor:L-}), Theorem~\ref{thm:equality}
By Proposition~\ref{prop:improved}, Corollary~\ref{cor:factor}, Theorem~\ref{thm:equality},
and Theorem~\ref{prop:bigL}, respectively, we see that
\begin{align*}
\left(1-a_p(f)p^{-1}\right)\langle{\rm log}_{}(\mathcal{Z}_{\pp,f,0}),\omega_f\rangle_{}
&=\lim_{\nu\to\nu_f}\nu\left(\bar{\mathcal{L}}^\omega_{\fil^+\bT}({\rm loc}_\pp(\mathfrak{Z}_0^{\xi^{-1}}))\right)\\
&=\lim_{\nu\to\nu_f}\left(1-\frac{\Theta_\nu^{-1}\xi_\nu^{}({\rm Fr}_\qq)}{\nu(\mathbf{a}_p)}\right)
\mathcal{L}_{\fil^+\bT}^\omega({\rm loc}_{\pp}(\mathfrak{Z}_\infty^{\xi^{-1}}))(\nu,\mathds{1})\\
&=\lim_{\nu\to\nu_f}\left(1-\frac{\Theta_\nu^{-1}\xi_\nu^{}({\rm Fr}_\qq)}{\nu(\mathbf{a}_p)}\right)
L_{\pp,\xi}(\mathbf{f})(\nu,\mathds{1})\\
&=\left(1-a_p(f)^{-1}\right) L_\pp(f,{\rm\mathbf{N}}_K).
\end{align*}

Since $a_p(f)=1$ by hypothesis, this shows that
$\langle{\rm log}_{}(\mathcal{Z}_{\pp,f,0}),\omega_f\rangle_{}=0$,
%and by the non-degeneracy of $\langle,\rangle_{}$ and the injectivity of ${\rm log}_{V_f}$, 
and the vanishing of $\mathcal{Z}_{\pp,f,0}$ follows. Now 
to the proof of the derivative formula $(\ref{eq:deriv})$.
 
Denote by $L_{\pp,\xi}(\sF)^\iota$ the image of $L_{\pp,\xi}(\sF)$
under the involution of $\tilde{\cR}\pwseries{\Gamma_\infty}$ induced by complex conjugation,
so that $L_{\pp,\xi}(\sF)^\iota(\chi)=L_{\pp,\xi}(\sF)(\chi^{-1})$
for every character $\chi$ of $\Gamma_\infty$. One immediately checks the commutativity of the diagram
\[
\xymatrix{
H^1_{\rm Iw}(K_\infty,\fil^+\bT)\ar[rr]^-{{\rm loc}_\pp}\ar[d]^{*}
&& H^1_{\rm Iw}(K_{\infty,\pp},\fil^+\bT) \ar[rr]^-{\mathcal{L}_{\fil^+\mathbb{T}}^{\omega}}\ar[d]^{*}
&& \tilde{\cR}\pwseries{\Gamma_{\pp,\infty}} \ar[d]^{\iota}
\\
H^1_{\rm Iw}(K_\infty,\fil^+\bT)\ar[rr]^-{{\rm loc}_{\overline\pp}}
&& H^1_{\rm Iw}(K_{\infty,\overline{\pp}},\fil^+\bT) \ar[rr]^-{\mathcal{L}_{\fil^+\mathbb{T}}^{\omega}}
&& \tilde{\cR}\pwseries{\Gamma_{\overline{\pp},\infty}},
}
\]
where the left and middle vertical arrows denote the action of complex conjugation.

%As explained in \cite[\S{2.6}]{GS},
%there exist a disc $U\subset\bZ_p$ centred at $2$ and a unique morphism of $\Lambda_{\cO_L}$-modules
%\[
%\mathscr{M}=\mathscr{M}_f:\cR\longrightarrow\mathscr{A}_U
%\]
%such that $\mathscr{M}(r)\vert_{k=2}=\nu_f(r)$ for every $r\in\cR$.
%Here $\mathscr{A}_U\subset L\pwseries{k−2}$ is endowed with the structure
%of a $\Lambda_{\cO_L}$-algebra via the character $1+p\bZ_p\longrightarrow\mathscr{A}_U$
%which sends $\gamma\in 1+p\bZ_p$ to the power series $\gamma^{k-2}:={\rm exp}((k-2)\log_p(\gamma))$.
%For every $n\geq 1$, set $\mathbf{a}_n(k):=\mathscr{M}(\mathbf{a}_n)$ and define
%Let $\mathscr{A}\subset L\pwseries{k-2,t-1}$ and $\mathscr{J}\subset\mathscr{A}$.
%Then $\mathscr{A}$ has a structure of $\cO_L\pwseries{\Gamma_\infty}$-algebra, induced
%by the character $\Gamma_\infty\longrightarrow\mathscr{A}$ mapping $g\in\Gamma_\infty$ to
%$\phi_A(g)^{t}:={\rm exp}(t\log_p(\phi_A(g)))$. Moreover, there exists
%a unique morphism of $\cO_L\pwseries{\Gamma_\infty}$-algebras
%\[
%\overline{\mathscr{M}}=\overline{\mathscr{M}}_f:\cR\pwseries{\Gamma_\infty}\longrightarrow\mathscr{A}
%\]
%whose restriction to $\cR$ equals $\mathscr{M}$. Define
%\[
%_\pp(\mathbf{f},k,t):=\overline{\mathscr{M}}(L_\pp(\mathbf{f}));
%\]
%it is a well-defined element of $\mathscr{A}$ up to multiplication
%by a nowhere-vanishing function $\alpha(k)\in\mathscr{A}_U$ such that $\alpha(2)=1$

Define the functions
\begin{equation}\label{defL}
\mathcal{L}_\pp(k,t):=
\left(1-\frac{(p/\varpi_{\pp})^{k/2-1}}{\nu_k(\mathbf{a}_p)\varpi_{\pp}^{-t}}\right)
L_{\pp,\xi}(\mathbf{f})(\nu_k,\phi_o^t),\quad
\mathcal{L}_{\overline{\pp}}(k,t)
:=\left(1-\frac{(p\varpi_{\pp})^{k/2-1}}{\nu_k(\mathbf{a}_p)\varpi_{\pp}^{t}}\right)
L_{\pp,\xi}(\mathbf{f})(\nu_k,\phi_o^{-t}),\nonumber
\end{equation}
where $\phi_o$ is the character $(\ref{phi})$.
By the combination of Theorem~\ref{cor:L-} and Theorem~\ref{thm:equality}, 
we then have
\begin{equation}\label{espL}
\mathcal{L}_\pp(k,t)=
\frac{1}{t!}\left(1-\frac{\nu_k(\mathbf{a}_p)\varpi_{\pp}^{-t}}{p(p/\varpi_{\pp})^{k/2-1}}\right)
\langle\log_{}({\rm loc}_\pp(\nu_k(\mathfrak{Z}_\infty)_{1-k/2+t})),\breve{\omega}_{\nu_k}\rangle_{},\nonumber
\end{equation}
%where $\log_{}=\log_{\fil^+V_{\nu_k}\otimes\phi_o^{1-k/2+t}}$, 
and by the above diagram we also have
\begin{equation}\label{esL*}
\mathcal{L}_{\overline{\pp}}(k,t)=
\frac{1}{t!}\left(1-\frac{\nu_k(\mathbf{a}_p)\varpi_{\pp}^{t}}{p(p\varpi_{\pp})^{k/2-1}}\right)
\langle\log_{}({\rm loc}_{\overline{\pp}}(\nu_k(\mathfrak{Z}^*_\infty)_{k/2-1-t})),\breve{\omega}_{\nu_k}\rangle_{}.\nonumber
\end{equation}
%where $\log_{}=\log_{\fil^+V_{\nu_k}\otimes\phi_o^{k/2-1-t}}$.

%When restricted to the ``line'' inside
%$\mathcal{X}_{\rm arith}^o(\cR)\times\mathcal{X}_\varpi^o$
%consisting of pairs $(\nu,\phi^\ell)$
%with $h\ell=r_\nu-1$, the $p$-adic multipliers
%\[
%\left(1-\frac{p^{r_\nu-1}}{\nu(\mathbf{a}_p)\varpi^\ell}\right)\biggr\vert_{h\ell=r_\nu-1}=
%\left(1-\frac{\bar{\pi}^{(k_\nu-2)/h}}{\nu(\mathbf{a}_p)}\right),\quad\quad
%\left(1-\frac{\nu(\mathbf{a}_p)\varpi^\ell}{p^{r_\nu}}\right)\biggr\vert_{h\ell=r_\nu-1}=
%\left(1-\frac{\nu(\mathbf{a}_p)}{p\bar{\pi}^{(k_\nu-2)/h}}\right)
%\]
%are interpolated by analytic functions of $\cR$.

By the ``functional equation'' satisfied by $\mathfrak{Z}_\infty$ (see Theorem~\ref{thm:bigHPs}),
it follows that the function
\[
\mathcal{L}_p(k,t):=\mathcal{L}_\pp(k,t)-w\mathcal{L}_{\overline{\pp}}(k,k-2-t)
\]
vanishes identically along the ``line'' $t=k/2-1$. By \cite[Prop.~2.3.6]{howard-invmath}, the sign 
$w$ is the \emph{opposite} of the sign in the functional equation for the $p$-adic $L$-function $L_p^{}(f,s)$ 
associated to $f$ in \cite{mtt}. Thus, if $w=1$, then ${\rm ord}_{s=1}L_p(f,s)>2$, 
and by \cite[Lemma~6.1]{venerucci-exp} the right-hand side of $(\ref{eq:deriv})$ vanishes; since the 
vanishing of the left-hand side follows easily from the construction of $\mathcal{Z}'_{\gamma,\infty}$ 
in Lemma~\ref{lem:divide}, we conclude that $(\ref{eq:deriv})$ reduces to the identify ``$0=0$'' when $w=1$. 
As a consequence, in the following we shall assume that $w=-1$.

Using the formula for the $\mathscr{L}$-invariant of $f$ as the logarithmic derivative
of $\nu_k(\mathbf{a}_p)$ at $k=2$ (see \cite[Thm.~3.18]{GS}, for example) 
and noting that $(p/\varpi_\pp)^{k/2-1}=\overline{\pi}_\pp^{(k-2)/h}$ by definition, 
we find
\begin{align}\label{eq:LHS}
\frac{\partial}{\partial k}\mathcal{L}_p(k,t)\bigr\vert_{(2,0)}
&=\left[\frac{d}{dk}\nu_k(\mathbf{a}_p)\bigr\vert_{k=2}-\frac{\log_p(\overline{\pi}_{\pp})}{h}
-w\left(\frac{d}{dk}\nu_k(\mathbf{a}_p)\bigr\vert_{k=2}-\frac{\log_p(\overline{\pi}_{\pp})}{h}\right)\right]
L_\pp(f)({\rm\mathbf{N}}_K)\\
&=-\left[\frac{(1-w)}{2}\left(\mathscr{L}_p(f)-\mathscr{L}_\pp(\chi_K)\right)\right]L_\pp(f)({\rm\mathbf{N}}_K)\nonumber\\
&=-\mathscr{L}_\pp(f,K)\cdot L_\pp(f)({\rm\mathbf{N}}_K).\nonumber
\end{align}
Using the aforementioned vanishing of $\mathcal{L}_p(k,k/2-1)$ for the first equality,  
we also find %(see \cite{cas-hsieh1} and \cite{LVZ})
\begin{align}\label{eq:RHS}
\frac{\partial}{\partial k}\mathcal{L}_p(k,t)\bigr\vert_{(2,0)}
=-\frac{1}{2}\frac{\partial}{\partial t}\mathcal{L}_p(k,t)\bigr\vert_{(2,0)}
&=-\frac{(1-w)}{2}\left(1-a_p(f)p^{-1}\right)\langle{\rm log}_{}(\mathcal{Z}_{\pp,f,0}'),\omega_{f}\rangle_{}\\
&=-(1-p^{-1})\langle{\rm log}_{}(\mathcal{Z}_{\pp,f,0}'),\omega_f\rangle_{},\nonumber
\end{align}
and comparing $(\ref{eq:LHS})$ and $(\ref{eq:RHS})$, we arrive at the equality
\begin{equation}\label{eq:impr}
(1-p^{-1})\langle{\rm log}_{}(\mathcal{Z}_{\pp,f,0}'),\omega_{f^{}}\rangle_{}
=\mathscr{L}_\pp(f,K)\cdot L_\pp(f)({\rm\mathbf{N}}_K).
\end{equation}

On the other hand, letting $\varphi_0:A\longrightarrow A$ be the identity isogeny,
by Theorem~\ref{thmbdp1A} we have
\begin{align*}\label{eq:HP}
L_\pp(f)(\mathbf{N}_K)
&=(1-a_p(f)p^{-1})\sum_{[\mathfrak{a}]\in{\rm Pic}(\cO_K)}\langle{\rm AJ}_{F}(\Delta_{\varphi_\fa\varphi_0}),\omega_{f}\rangle_{}\\
&=(1-p^{-1})\langle{\log}_{}({\rm loc}_\pp(\kappa_f)),\omega_{f}\rangle_{},
\end{align*}
which combined with $(\ref{eq:impr})$ concludes the proof of Theorem~\ref{main}.
\end{proof}

\begin{rem}
It would be interesting to extend the main result of this paper to higher weights.
As is well-known (see \cite[Thm.~3]{Li}, for example), if $f%=q+\sum_{n=2}^\infty a_n(f)q^n
\in S_{k}(\Gamma_0(Np))$ is a newform with $U_p$-eigenvalue $a_p(f)$, 
then $a_p(f)^2=p^{k-2}$. %(see \cite[Thm.~3]{Li}, for example).
Thus if $k>2$, then $f$ has positive slope (i.e., it is \emph{not} $p$-ordinary), 
and the extension of our Theorem~\ref{main} to this case 
would require an extension to Coleman families\footnote{For a recent result along these lines
(albeit for a different Euler system), see \cite{LZ-Coleman}.}
of Howard's construction of big Heegner
points in Hida families \cite{howard-invmath}.
\end{rem}

% ----------------------------------------------------------------
\bibliographystyle{alpha}
\bibliography{Heegner}

\end{document}